\numberwithin{equation}{section}
\definecolor{mycolorred}{rgb}{1, 0, 0}
\newtheorem{theorem}{Theorem}[section]
\newtheorem{definition}[theorem]{Definition}
\newtheorem{lemma}[theorem]{Lemma}
\newtheorem{proposition}[theorem]{Proposition}
\newtheorem{remark}[theorem]{Remark}
\def\<{\langle}
\def\>{\rangle}
\def\P{{\mathbb P}}
\def\E{{\mathbb E}}
\def\R{{\mathbb R}}
\def\N{{\mathbb N}}
\def\cL{{\mathcal L}}
\def\cP{{\mathcal P}}
\def\bx{\bar{x}}
\def\bv{\bar{v}}
\def\bX{\bar{X}}
\def\bob{{\mathbf b}}
\def\boc{{\mathbf c}}
\def\bogamma{\boldsymbol{\gamma}}
\def\botheta{\boldsymbol{\theta}}
\def\boxx{{\mathbf x}}
\def\bov{{\mathbf v}}
\def\boX{{\mathbf X}}
\title[Construction of Boltzmann and McKean Vlasov type flows]{Construction of Boltzmann and McKean Vlasov type flows 
  (the sewing lemma approach)}
\date{\today}
\author{Aur\'elien Alfonsi}
\address{CERMICS, Ecole des Ponts, Marne-la-Vall\'ee, France. MathRisk, Inria, Paris,
  France.}
\email{aurelien.alfonsi@enpc.fr}
\author{Vlad Bally}
\address{Universit\'e Paris-Est, LAMA (UMR CNRS, UPEMLV, UPEC), MathRisk INRIA,
  F-77454 Marne-la-Vall\'ee, France.}
\email{bally@univ-mlv.fr}
\thanks{AA acknowledges the support of the ``chaire Risques financiers'', Fondation du Risque.}
\subjclass[2010]{35Q20 35Q83 76P05 60H20}
\keywords{Sewing lemma, Boltzmann equation, Enskog equation, McKean-Vlasov equation, Interacting particle system.}
\begin{document}

\begin{abstract}
  We are concerned with a mixture of Boltzmann and McKean-Vlasov type equations, this means (in probabilistic
terms) equations with coefficients depending on the law of the solution itself, and driven by a Poisson point measure with the intensity
depending also on the law of the solution. Both the analytical Boltzmann
equation and the probabilistic interpretation initiated by Tanaka~\cite{[T1],[T2]} have
intensively been discussed in the literature for specific models related to
the behavior of gas molecules. In this paper, we  consider general
abstract coefficients that may include mean field effects and then we discuss the link with specific models as
well. In contrast with the usual approach in which integral equations are
used in order to state the problem, we employ here a new formulation of the
problem in terms of flows of endomorphisms on the space of probability
measure endowed with the Wasserstein distance. This point of view already
appeared in the framework of rough differential equations. Our results
concern existence and uniqueness of the solution, in the formulation of
flows, but we also prove that the "flow solution" is a solution of the
classical integral weak equation and admits a probabilistic interpretation. Moreover, we obtain
stability results and regularity with respect to the time for such
solutions. Finally we prove the convergence of empirical measures based on
particle systems to the solution of our problem, and we obtain the rate of
convergence. We discuss as examples the homogeneous and the inhomogeneous
Boltzmann (Enskog) equation with hard potentials.
\end{abstract}

\maketitle

\section{Introduction}

In this paper we consider  a mixture of Boltzmann and McKean-Vlasov type equations defined as follows. Let $\mathcal{P}_{1}({\mathbb{R}}^{d})$ denote the space
of probability measures on ${\mathbb{R}}^{d}$ with a finite first
moment. We consider $\rho \in \mathcal{P}_{1}({\mathbb{R}}^{d})$, an abstract measurable space $(E,\mu )$ and three coefficients $b:{\mathbb{R}}^{d}\times \mathcal{P}_{1}({\mathbb{R}}^{d})\rightarrow {\mathbb{R}}^{d}$, $c:{\mathbb{R}}^{d}\times E\times {\mathbb{R}}^{d}\times \mathcal{P}_{1}({\mathbb{R}}^{d})\rightarrow {\mathbb{R}}^{d}$ and $\gamma :{\mathbb{R}}^{d}\times E\times {\mathbb{R}}^{d}\times \mathcal{P}_{1}({\mathbb{R}}^{d})\rightarrow {\mathbb{R}}_{+}$ that verify some linear growth and some Lipschitz continuity hypothesis (see Assumption $\mathbf{(A)}$ for
precise statements) and we associate the following weak equation on $f_{s,t}\in\cP_1(\R^d)$, $0\le s\le t$: 
\begin{align}
\forall \varphi &\in C_{b}^{1}({\mathbb{R}}^{d}), \  \int_{{\mathbb{R}}^{d}}\varphi (x)f_{s,t}(dx)=\int_{{\mathbb{R}}%
^{d}}\varphi (x)\rho (dx)+\int_{s}^{t}\int_{{\mathbb{R}}^{d}}\left\langle
b(x,f_{s,r}),\nabla \varphi (x)\right\rangle f_{s,r}(dx)dr  \label{int1} \\
&+\int_{s}^{t}\int_{{\mathbb{R}}^{d}\times {\mathbb{R}}%
^{d}}f_{s,r}(dx)f_{s,r}(dv)\int_{E}\left( \varphi
(x+c(v,z,x,f_{s,r}))-\varphi (x)\right) \gamma (v,z,x,f_{s,r})\mu (dz)dr. 
\notag
\end{align}%
Here,  $C_{b}^{1}({\mathbb{R}}^{d})$ denotes the set
of bounded $C^{1}$ functions with bounded gradient. Given a fixed $s\geq 0,$
a solution of this equation is a family $f_{s,t}(dx)\in \mathcal{P}_{1}({\mathbb{R}}^{d}),t\geq s,$ which verify (\ref{int1}) for every test function 
$\varphi$. If one replaced in the last term of~\eqref{int1} the solution $f_{s,r}(dv)$ by a fixed $g_{s,t}(dv)\in \mathcal{P}_{1}({\mathbb{R}}^{d})$, this would be a McKean-Vlasov
type equation. Moreover, when the coefficients $b,c,\gamma $ do not depend on
the solution $f_{s,r}$ and for specific choices of $b$, $c$ and $\gamma$, this
equation covers variants of the Boltzmann equation (see Villani \cite{[V]}
and Alexandre~\cite{[A]} for the mathematical approach and Cercignani~\cite{[C]} for a presentation of the physical background). In the case of the
homogeneous Boltzmann equation the particles are (and remain) uniformly
distributed in space, so their positions do not appear as variables in the equation.
Then, $x\in {\mathbb{R}}^{d}$ represents the velocity of the typical
particle. In this case, the drift coefficient is simply $b=0$. In the case of the
inhomogeneous Boltzmann equation also known as the Enskog equation (see Arkeryd~\cite{[Ar]}), the positions of the particles matter.  One works then on ${\mathbb{R}}^{2d}$, $(x^{1},...,x^{d})$ is the position and $(x^{d+1},...,x^{2d})$ represents the velocity of the typical particle. Then, the drift coefficient will be $b^{i}(x)=x^{i+d},i=1,...,d$ and $b^{i}(x)=0,i=d+1,...,2d$. This is one motivation for considering a general drift term in our abstract formulation.

The probabilistic approach to this type of Boltzmann equation has been
initiated by Tanaka in \cite{[T1]},\cite{[T2]} followed by many others (see 
\cite{[BF]},\cite{[DGM]},\cite{[FG1]},\cite{[FMi]} for example). One takes $f_{s,t}(dx),t\geq s$ to be the solution of the equation (\ref{int1}) and
constructs a Poisson point measure $N_{f}$ with state space ${\mathbb{R}}%
^{d}\times E\times {\mathbb{R}}_{+}$ and with intensity measure 
$f_{s,r}(dv)\mu (dz)1_{{\mathbb{R}}_{+}}(u)du1_{(s,\infty )}(r)dr$.
Then, one associates the stochastic equation%
\begin{align}
  X_{s,t}=X&+\int_{s}^{t}b(X_{s,r},f_{s,r})dr \label{int3}\\
  &+\int_{s}^{t}\int_{{\mathbb{R}}%
^{d}\times E\times {\mathbb{R}}_{+}}c(v,z,X_{s,r-},f_{s,r-})1_{\{u\leq
\gamma (v,z,X_{s,r-},f_{s,r-})\}}N_{f}(dv,dz,du,dr). \notag
\end{align}%
Here, the initial value $X$ is a random variable with law $\rho$ which is independent of the
Poisson measure $N_{f}$. Under suitable hypothesis (for specific
coefficients) one proves that the stochastic equation (\ref{int3}) has a
unique solution and moreover, the law of $X_{s,t}$ is $f_{s,t}(dx).$ In this
sense, (\ref{int3}) is a probabilistic interpretation of (\ref{int1}) and $(X_{s,t})$ is called the "Boltzmann process" (see \cite{[F1]} for example).

In the present paper, we give an alternative formulation of the problem
presented above. We first recall the definition of the Wasserstein distance~$W_{1}$ on the space~$\mathcal{P}_1(\R^d)$: 
\begin{align*}
\mu,\nu \in \mathcal{P}_1(\R^d), \  W_{1}(\mu ,\nu )&=\inf_{\pi \in \Pi (\mu ,\nu )}\int_{{\mathbb{R}}^{d}\times {%
    \mathbb{R}}^{d}}\left\vert x-y\right\vert \pi (dx,dy)\\
&=\sup_{L(f)\leq
1}\left\vert \int_{{\mathbb{R}}^{d}}f(x)\mu (dx)-\int_{{\mathbb{R}}%
^{d}}f(x)\nu (dx)\right\vert,
\end{align*}%
where $\Pi (\mu ,\nu )$ is the set of probability measures on ${\mathbb{R}}%
^{d}\times {\mathbb{R}}^{d}$ with marginals $\mu $ and $\nu$, and $L(f):=\sup_{x\not = y} \frac{|f(y)-f(x)|}{|y-x|}$ is the Lipschitz constant of $f$. The second equality is a classical consequence of Kantorovich duality, see e.g. Remark 6.5~\cite{Villani}. We also introduce $\mathcal{E}_0(\mathcal{P}_{1}({\mathbb{R}}^{d}))$, the metric space of the endomorphisms $\theta :\mathcal{P}_{1}({\mathbb{R}}^{d})\rightarrow \mathcal{P}_{1}({\mathbb{R}}^{d})$ such that $\sup_{\rho \in \mathcal{P}_{1}({\mathbb{%
R}}^{d})}\frac{ \int_{\R^d}
\left\vert x\right\vert \theta(\rho) (dx)}{1+\int_{\R^d}
\left\vert x\right\vert \rho (dx)}< \infty$, endowed with the distance 
\begin{equation*}
d_{\ast }(\theta ,\theta ^{\prime })=\sup_{\rho \in \mathcal{P}_{1}({\mathbb{%
R}}^{d})}\frac{W_{1}(\theta (\rho ),\theta ^{\prime }(\rho ))}{1+\int_{\R^d}
\left\vert x\right\vert \rho (dx)}.
\end{equation*}
This is a complete metric space (see Lemma~\ref{endo_complete}).

Then we construct $\Theta _{s,t}\in \mathcal{E}_0(\mathcal{P}_{1}({\mathbb{R}}^{d}))$ in the following way. Given $\rho \in \mathcal{P}_{1}({\mathbb{R}}^{d})$, we construct a Poisson point measure $N_{\rho }$ with state space ${%
\mathbb{R}}^{d}\times E\times {\mathbb{R}}_{+}$ and with intensity measure 
\begin{equation*}
\widehat{N}_{\rho }(dv,dz,du,dr)=\rho (dv)\mu (dz)1_{{\mathbb{R}}%
_{+}}(u)du1_{\{s,\infty )}(r)dr.
\end{equation*}%
Moreover, we take a random variable $X$ with law $\rho $ which is
independent of the Poisson measure $N_{\rho }$ and we define%
\begin{equation}
X_{s,t}(\rho )=X+b(X,\rho )(t-s)+\int_{s}^{t}\int_{{\mathbb{R}}^{d}\times
E\times {\mathbb{R}}_{+}}c(v,z,X,\rho )1_{\{u\leq \gamma (v,z,X,\rho
)\}}N_{\rho }(dv,dz,du,dr).  \label{int4}
\end{equation}%
Clearly $X_{s,t}(\rho )$ is the one step Euler scheme for the stochastic
equation (\ref{int3}). We define $\Theta _{s,t}(\rho )$ to be the
probability distribution of $X_{s,t}(\rho ):$ 
\begin{equation*}
\Theta _{s,t}(\rho )(dv):={\mathbb{P}}(X_{s,t}(\rho )\in dv).
\end{equation*}%
Under suitable assumptions, we get that $\Theta _{s,t}$ indeed belongs to~$\mathcal{E}_0(\mathcal{P}_{1}({\mathbb{R}}^{d}))$.

\begin{definition}
\label{def_flow} A family of endomorphisms $\theta_{s,t}\in \mathcal{E}_0( \mathcal{P}_{1}({\mathbb{R}}^{d}))$ with $0\leq s<t$ is a flow if 
\begin{equation*}
\theta _{s,t}=\theta _{r,t}\circ \theta _{s,r}, \text{ for every } 0\leq
s<r<t.
\end{equation*}
It is a stationary flow if $\theta_{s,t}=\theta_{0,t-s}$.
\end{definition}
Our problem is stated as follows: find a flow of endomorphisms such that 
\begin{equation}
d_{\ast }(\theta _{s,t},\Theta _{s,t})\leq C(t-s)^{2}.  \label{int5}
\end{equation}%
We call this $\theta$ a "flow solution" of the equation associated to the
coefficients $b,c,\gamma$ and to the measure~$\mu$. It turns out that
under suitable hypotheses, a flow solution exists and is unique. Moreover the
flow solution is a weak solution of Equation~(\ref{int1}) and admits the
stochastic representation~(\ref{int3}).

This special way to characterize the solution of an equation by means of
the distance in short time to the one step Euler scheme first appears,  to our knowledge, in the paper~\cite{[D]} of Davie in the framework of
rough differential equations. Then, Bailleul in~\cite{[B]} and~\cite{[BC]}
coupled this idea with the concept of flows. These ideas appeared in the
framework of rough path integration initiated by Lyons in his seminal paper 
\cite{[L]} (we refer to Friz and Victoir~\cite{[FV]} and to Friz and Hairer~\cite{[FH]} for a complete and friendly presentation of this topic).
It is worth to mention that a central instrument in the rough path theory is
the so called "sewing lemma" introduced by Feyel and De la Pradelle in \cite%
{[FP]},\cite{[FPM]} and in the same time, independently, by Gubinelli in 
\cite{[G]}. \ This is a generic and efficient way to treat the convergence
of Euler type schemes. In our paper we give a general abstract variant of
this lemma which plays a crucial part in our approach. In our framework this
lemma states as follows. We consider an abstract family of endomorphisms $\Theta _{s,t}$ which has the
following two properties. First, we assume the Lipschitz continuity property%
\begin{equation}
d_{\ast }(\Theta _{s,t}\circ U ,\Theta _{s,t}\circ \tilde{U} )\leq
C e^{C(t-s)}d_{\ast }(U ,\tilde{U} ),\quad \forall U ,\tilde{U} \in \mathcal{E}_0(\cP
_{1}({\mathbb{R}}^{d})).  \label{int6}
\end{equation}%
Moreover, notice that $\Theta _{s,t}$ has not the flow property, although we
expect this property to be true for $\theta _{s,t}.$ However, we assume that it has almost this
property in the following  asymptotic (small time) sense: we assume that for every $s<u<t$,
\begin{equation}
d_{\ast }(\Theta_{s,t},\Theta _{u,t}\circ \Theta _{s,u})\leq C(t-s)^{2}.
\label{int7}
\end{equation}%
This is the "sewing property". These two properties essentially  allows to construct by the sewing lemma the flow $\theta _{s,t}$ which satisfies (\ref{int5}) as the limit in $d_{\ast }$ of the Euler schemes based on $%
\Theta _{s,t}.$ More precisely for a partition $\mathcal{P}%
=\{s=s_{0}<....<s_{n}=t\}$\ one defines the Euler scheme $\Theta _{s,t}^{%
\mathcal{P}}=\Theta _{s_{n-1},s_{n}}\circ ....\circ \Theta _{s_{0},s_{1}}$
and constructs $\theta _{s,t}$ as a limit as $\max_{i=1,\dots,n}s_i-s_{i-1}=:\left\vert \mathcal{P}%
\right\vert \rightarrow 0$ of such Euler schemes. Besides, the following error
estimate holds:%
\begin{equation}
d_{\ast }(\Theta _{s,t}^{\mathcal{P}},\theta _{s,t})\leq C\left\vert 
\mathcal{P}\right\vert (t-s).  \label{int8}
\end{equation}

Section~\ref{abs} presents the abstract framework that allows us to prove Lemma~\ref{Sewing}, a generalized sewing lemma that give the existence and the uniqueness of a flow satisfying~\eqref{int5}. A pleasant feature is that the
uniqueness of the flow is quite easy to obtain, since  it  essentially has to match with the limit of Euler schemes. Thus, the flow provides one notable solution of the weak equation~\eqref{int1}: this is the one which is obtained as the limit of Euler schemes. 
In Section~\ref{jump}, we present the framework of our study (i.e. jump type equations) and our main assumptions. We then use this sewing lemma to prove in Theorem~\ref{flow} that the flow $\theta_{s,t}$ defined as the solution of (\ref{int5}) exists and is unique. We further obtain the estimate (\ref{int8}). Besides, we prove that the flow solution $\theta_{s,t}$ constructed
in Theorem \ref{flow} is a weak solution of equation (\ref{int1}) and admits a probabilistic interpretation (see equation (\ref{int3})).
Then, in Section~\ref{particles} we give a numerical approximation scheme for $\theta _{s,t}(\rho )$ based on a particle system. We obtain in Theorem~\ref{theorem_approx} the convergence of the law of any particle towards the flow solution, and give a rate of convergence that is interesting for practical applications. We also obtain a propagation of chaos result for the Wasserstein distance. Last, Section~\ref{sec_Boltz} applies the general results to the homogeneous Boltzmann equation and the non homogeneous Boltzmann (Enskog) equation. The problem of the uniqueness of the homogeneous Boltzmann equation has been studied in several papers by Fournier~\cite{[F1]}, Desvillettes and Mouhot~\cite{[DM]}, Fournier and Mouhot~\cite{[FMu]}. The study of the Enskog equation is up to our knowledge much more recent: we mention here contributions concerning existence, uniqueness and particle system approximations by Albeverio,  R\"udiger and Sundar~\cite{[ARS]} and Friesen,  R\"udiger and Sundar~\cite{[FRS],[FRS1]}. Here, for technical reasons, we only deal with truncated coefficients. The interesting problem of analysing the convergence of the equation with truncated coefficients towards the general equation is not related to our approach based on the sewing lemma and is thus beyond the scope of this paper. We show that the assumptions of Theorem~\ref{flow} are satisfied, which enables to define the flow $\theta_{s,t}$ that is a weak solution of~\eqref{int1} and admits a probabilistic representation~\eqref{int3}. Interestingly, our approach enables us to study equations that combine interactions of Boltzmann type and mean field interactions of McKean-Vlasov type. To illustrate this, we introduce an alternative equation to the Enskog equation, where we replace the space localization function by a mean-field interaction: collisions are more frequent when the typical particle is in a region with a high density of particles. Such a problem enters as well in our framework, and we thus obtain the same results for the flow given by this equation.

\section{Abstract sewing lemma}\label{abs}

We consider an abstract set $V$, and we denote by $\mathcal{E}(V)$ the space
of the endomorphisms $\varphi :V\rightarrow V$. Here and in the rest of the paper, we use the multiplicative notation for composition, so that%
\begin{equation*}
\varphi \psi(v):=\varphi(\psi(v)).
\end{equation*}
We consider $\mathcal{E}_0(V)\subset \mathcal{E}(V)$ a subgroup of endomorphism  (i.e. $I_d\in \mathcal{E}_0(V)$ and $\varphi,\psi\in \mathcal{E}_0(V)\implies \varphi \psi \in \mathcal{E}_0(V)$). We assume that there is a distance $d_\ast$ on $\mathcal{E}_0(V)$ such that $(\mathcal{E}_0(V),d_\ast)$ is a complete metric space.  We assume besides that
\begin{equation}\label{dist_compatibility}
  \forall U\in \mathcal{E}_0(V), \exists C(U) \in \R_+,\forall \varphi,\psi\in \mathcal{E}_0(V), \ d_\ast(\varphi U, \psi U)\le C(U) d_\ast(\varphi, \psi),
\end{equation}
and moreover that we can pick the constant $C(U)$ uniformly in the following sense:
\begin{equation}\label{dist_compatibility2}
  \forall R>0,  \exists \bar{C}_R \in \R_+,\forall U,\varphi,\psi\in \mathcal{E}_0(V), \  d_\ast(U,Id)\le R \implies d_\ast(\varphi U, \psi U)\le \bar{C}_R d_\ast(\varphi, \psi).
\end{equation}
Thanks to~\eqref{dist_compatibility}, we get that $\varphi_n\to \varphi \implies \varphi_n U \to \varphi U$ for any $U\in \mathcal{E}_0$, and~\eqref{dist_compatibility2} ensures that this convergence is uniform on bounded sets.  

We now consider a time horizon $T>0$ which will be fixed in the following and a family of endomorphisms $\Theta _{s,t} \in \mathcal{E}_0(V)$ for $0\leq s\le t \le T$, such that $\Theta_{s,t}=Id$ for $s=t$ and
\begin{equation}
D^\Theta(T):= \sup_{0\leq s\le t \le T}d_\ast(\Theta_{s,t} ,Id)<\infty.  \tag{$\textbf{H}_{0}$}  \label{bis0}
\end{equation}
For a partition $\mathcal{P}=\{s=s_{0}<...<s_{r}=t\}$ of the interval $[s,t]\subset[0,T]$ we
define the corresponding scheme 
\begin{equation*}
\Theta_{s,t}^{\mathcal{P}}:=\Theta_{s_{r-1},s_{r}}\dots
\Theta_{s_0,s_1} \in \mathcal{E}_0(V).
\end{equation*}%

More generally, for $s<t$ and a partition $\mathcal{P}=\{s_{0}<...<s_{r}\}$
such that $s=s_{i}$ and $t=s_{j}$ with $0\le i<j\le r$, we define 
\begin{equation*}
\Theta_{s,t}^{\mathcal{P}}=\Theta _{s_{j-1},s_{j}}\dots
\Theta_{s_i,s_{i+1}}.
\end{equation*}
 For $s \in (0,T)$, we define
\begin{equation}\label{endo_Euls}
  \mathcal{E}^{\Theta}_{s}=\cup_{r\in[0,s]}\{ \Theta_{r,s}^{\mathcal{P}} :  \mathcal{P}=\{r=r_{0}<...<r_{k}=s\} \text{ a partition of }[r,s] \} \subset \mathcal{E}_0(V).
\end{equation}
We assume:
\begin{itemize}
\item (Lipschitz property) There exists $C_{lip}$ such that for any $0\le  s \le t<T$ and $U,\tilde{U}\in\mathcal{E}_0(V)$, 
\begin{equation}
d_\ast(\Theta_{s,t}^{\mathcal{P}}U,\Theta_{s,t}^{\mathcal{P}}\tilde{U})\le C_{lip}d_\ast(U,\tilde{U}).  \tag{$\textbf{H}_{1}$}  \label{bis1}
\end{equation}
\item (Sewing property) There exists $C_{sew}$ and $\beta >1$ such that  any $0\le s<u<t <T$, $U\in \mathcal{E}^{\Theta}_{s}$ 
\begin{equation}
d_{\ast }(\Theta _{s,t} U,\Theta_{u,t}\Theta _{s,u}U )\leq
C_{sew}(t-s)^{\beta }.  \tag{$\textbf{H}_{2}$}  \label{bis2}
\end{equation}
\end{itemize}

We stress that the constants $C_{lip}$ and $C_{sew}$, for $T$ being fixed, do not depend on  $(s,u,t)$ and on $(U,\tilde{U})$.
A family of endomorphisms $\Theta _{s,t}$ that verifies the
hypotheses $(\mathbf{H}_{0})$, $(\mathbf{H}_{1})$ and $(\mathbf{H}_{2})$ will be called a
"semi-flow". In this general framework the "sewing lemma" can be stated as
follows.

\begin{lemma} (Sewing lemma) 
  \label{Sewing} Suppose that \eqref{bis0}, \eqref{bis1} and \eqref{bis2} hold. Then, there exists $\theta_{s,t}\in \mathcal{E}_0(V)$, $0\leq s\le t\le T$, which is a flow (see Definition~\ref{def_flow}) and satisfies
\begin{align}
 & d_{\ast }(\theta _{s,t},\Theta_{s,t})\leq 2^{\beta }C_{lip}C_{sew}\zeta
  (\beta )(t-s)^{\beta },  \label{bis4}
\end{align}
with $\zeta(\beta)=\sum_{n=1}^\infty \frac{1}{n^\beta}$.
Moreover, it satisfies the Lipschitz property
\begin{equation}
     d_\ast(\theta_{s,t}U  ,\theta_{s,t}\tilde{U} )\leq
  C_{lip}d_\ast(U,\tilde{U}) \text{ for } U,\tilde{U} \in  \mathcal{E}_0(V), \label{bis4'} 
\end{equation}
Besides, we have the approximation estimate
\begin{equation}
   d_{\ast }(\Theta _{s,t}^{\mathcal{P}},\theta _{s,t})\leq 2^{\beta
}C_{lip}^{2}C_{sew}\zeta (\beta )(t-s)\left\vert \mathcal{P}\right\vert
^{\beta -1} \text{ for } \mathcal{P} \text{ partition of } [s,t] ,\label{bis5}
\end{equation}
with $\left\vert \mathcal{P}\right\vert :=\max_{i=0,...,r-1}(s_{i+1}-s_{i}).$

Furthermore, this is the unique flow such that $d_{\ast }(\theta _{s,t},\Theta _{s,t} )\leq C (t-s) h(t-s)$   for some constant $C>0$ and nondecreasing function $h:\R_+\to \R_+$ such that $\lim_{t\to 0}h(t)=0$. 
\end{lemma}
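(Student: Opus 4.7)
I would adapt the classical sewing-lemma strategy of Feyel--de la Pradelle and Gubinelli to this endomorphism setting, in three steps. For existence, the core is a one-point-removal estimate: for a partition $\mathcal{P}=\{s=s_0<\cdots<s_r=t\}$ with $r\ge 2$, a pigeonhole bound on $\sum_{j=1}^{r-1}(s_{j+1}-s_{j-1})\le 2(t-s)$ produces an interior index $j$ with $s_{j+1}-s_{j-1}\le 2(t-s)/(r-1)$. Applying the sewing hypothesis $(\mathbf{H}_2)$ on the triple $(s_{j-1},s_j,s_{j+1})$ to the input $U:=\Theta^{\mathcal{P}\cap[s,s_{j-1}]}_{s,s_{j-1}}\in\mathcal{E}^\Theta_{s_{j-1}}$ and then the Lipschitz property $(\mathbf{H}_1)$ to the tail $\Theta^{\mathcal{P}\cap[s_{j+1},t]}_{s_{j+1},t}$ yields
\begin{equation*}
d_\ast\bigl(\Theta^\mathcal{P}_{s,t},\Theta^{\mathcal{P}\setminus\{s_j\}}_{s,t}\bigr)\le C_{lip}C_{sew}\bigl(2(t-s)/(r-1)\bigr)^\beta.
\end{equation*}
Iterating the removal down to the trivial partition and summing gives $d_\ast(\Theta^\mathcal{P}_{s,t},\Theta_{s,t})\le 2^\beta C_{lip}C_{sew}(t-s)^\beta\sum_{k=1}^{r-1}k^{-\beta}\le 2^\beta C_{lip}C_{sew}\zeta(\beta)(t-s)^\beta$. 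For a refinement $\mathcal{P}'\supset\mathcal{P}$, applying the same estimate on each subinterval of $\mathcal{P}$ and telescoping along $\mathcal{P}$ via $(\mathbf{H}_1)$ already produces the approximation bound~\eqref{bis5} between refinements, so $(\Theta^\mathcal{P}_{s,t})_\mathcal{P}$ is Cauchy as $|\mathcal{P}|\to 0$; completeness of $(\mathcal{E}_0(V),d_\ast)$ provides the limit $\theta_{s,t}$, and passing to the limit yields~\eqref{bis4} and~\eqref{bis5}.

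For the flow property, I would pick partitions $\mathcal{P}_n$ of $[s,t]$ containing a fixed $u\in(s,t)$ with $|\mathcal{P}_n|\to 0$ and write $\Theta^{\mathcal{P}_n}_{s,t}=\Theta^{\mathcal{P}_n\cap[u,t]}_{u,t}\Theta^{\mathcal{P}_n\cap[s,u]}_{s,u}$. The right-hand composition converges to $\theta_{u,t}\theta_{s,u}$ by a triangle decomposition in which the ``outer'' variation is absorbed by $(\mathbf{H}_1)$ and the ``inner'' one by~\eqref{dist_compatibility2}, using $(\mathbf{H}_0)$ to bound $d_\ast(\Theta^{\mathcal{P}_n\cap[s,u]}_{s,u},Id)$ uniformly in~$n$; uniqueness of $d_\ast$-limits then gives $\theta_{s,t}=\theta_{u,t}\theta_{s,u}$. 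The Lipschitz property~\eqref{bis4'} follows from a similar decomposition
\begin{equation*}
d_\ast(\theta_{s,t}U,\theta_{s,t}\tilde U)\le d_\ast(\theta_{s,t}U,\Theta^\mathcal{P}_{s,t}U)+d_\ast(\Theta^\mathcal{P}_{s,t}U,\Theta^\mathcal{P}_{s,t}\tilde U)+d_\ast(\Theta^\mathcal{P}_{s,t}\tilde U,\theta_{s,t}\tilde U),
\end{equation*}
where the middle term is bounded by $C_{lip}d_\ast(U,\tilde U)$ via $(\mathbf{H}_1)$ and the outer two are controlled by~\eqref{dist_compatibility2} together with~\eqref{bis5}, after which one lets $|\mathcal{P}|\to 0$.

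The uniqueness argument is the most delicate step and I expect it to be the main obstacle. Let $\tilde\theta$ be another flow with $d_\ast(\tilde\theta_{s,t},\Theta_{s,t})\le C(t-s)h(t-s)$. Fix $[s,t]$, partition into $r$ equal pieces $s=s_0<\cdots<s_r=t$ of length $\delta=(t-s)/r$, and use the flow property of both $\theta$ and $\tilde\theta$ to write $\theta_{s,t}=\theta_{s_{r-1},s_r}\cdots\theta_{s_0,s_1}$ and $\tilde\theta_{s,t}=\tilde\theta_{s_{r-1},s_r}\cdots\tilde\theta_{s_0,s_1}$. Telescope by swapping $\tilde\theta_{s_i,s_{i+1}}\to\theta_{s_i,s_{i+1}}$ from left to right: after each swap the already-converted left block collapses by flow of $\theta$ into a single $\theta_{s_{i+1},t}$, which is $C_{lip}$-Lipschitz by~\eqref{bis4'}; the unconverted right block $\tilde\theta_{s,s_i}$ has distance to~$Id$ bounded by $C(t-s)h(T)+D^\Theta(T)=:R$ uniformly in $i,r$, so post-composition is controlled by the uniform factor $\bar{C}_R$ from~\eqref{dist_compatibility2}. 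Combining with $d_\ast(\tilde\theta_{s_i,s_{i+1}},\theta_{s_i,s_{i+1}})\le C\delta h(\delta)+2^\beta C_{lip}C_{sew}\zeta(\beta)\delta^\beta$, each swap contributes at most $C_{lip}\bar{C}_R(C\delta h(\delta)+2^\beta C_{lip}C_{sew}\zeta(\beta)\delta^\beta)$, so summing the $r$ terms gives
\begin{equation*}
d_\ast(\tilde\theta_{s,t},\theta_{s,t})\le C_{lip}\bar{C}_R\bigl(C(t-s)h(\delta)+2^\beta C_{lip}C_{sew}\zeta(\beta)(t-s)\delta^{\beta-1}\bigr),
\end{equation*}
which vanishes as $r\to\infty$ since $\beta>1$ and $h(\delta)\to 0$. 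The subtlety is precisely that $\tilde\theta$ is not a priori Lipschitz, so $(\mathbf{H}_1)$ is unavailable on its side; the uniform constant $\bar{C}_R$ from~\eqref{dist_compatibility2}, independent of the swap position, is exactly what allows the $r$-term sum to collapse.
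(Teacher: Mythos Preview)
Your proof is correct and follows essentially the same strategy as the paper: the one-point-removal with pigeonhole to get the uniform bound $d_\ast(\Theta^{\mathcal{P}}_{s,t},\Theta_{s,t})\le 2^\beta C_{lip}C_{sew}\zeta(\beta)(t-s)^\beta$, the refinement/telescoping argument for the Cauchy property, and the use of \eqref{dist_compatibility2} together with $(\mathbf{H}_0)$ in the uniqueness step are all as in the paper. The only notable variant is in uniqueness: you telescope directly between $\theta_{s,t}$ and $\tilde\theta_{s,t}$ via hybrids $\theta_{s_i,t}\tilde\theta_{s,s_i}$, invoking the already-established Lipschitz property~\eqref{bis4'} of $\theta$ on the left block, whereas the paper telescopes $\tilde\theta_{s,t}$ against the Euler scheme $\Theta^{\mathcal{P}}_{s,t}$ via hybrids $\Theta^{\mathcal{P}}_{s_i,t}\tilde\theta_{s,s_i}$, using $(\mathbf{H}_1)$ directly; both routes rely on the same uniform post-composition bound $\bar C_R$ and are equivalent in spirit.

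One small point of exposition: in your flow-property step, the phrase ``the outer variation is absorbed by $(\mathbf{H}_1)$'' is slightly misleading, since varying the outer factor $\Theta^{\mathcal{P}_n\cap[u,t]}_{u,t}\to\theta_{u,t}$ with a fixed inner $U$ is what requires \eqref{dist_compatibility} or \eqref{dist_compatibility2}, while it is the inner variation (with an Euler scheme on the outside) that is handled by $(\mathbf{H}_1)$. The paper orders the triangle inequality so that the post-composition step is by the fixed limit $\theta_{s,u}$ and uses the non-uniform \eqref{dist_compatibility}; your version with \eqref{dist_compatibility2} and a uniform bound on $d_\ast(\Theta^{\mathcal{P}_n\cap[s,u]}_{s,u},Id)$ (which follows from the already-proved estimate on $d_\ast(\Theta^{\mathcal{P}}_{s,u},\Theta_{s,u})$ combined with $(\mathbf{H}_0)$, not from $(\mathbf{H}_0)$ alone) works equally well once the labels are straightened out.
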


\begin{proof}
  We first prove that for any $U\in \mathcal{E}_s^\Theta$,
\begin{equation}
 d_{\ast }(\Theta_{s,t}^{\mathcal{P}} U ,\Theta_{s,t} U  )\leq
 2^{\beta}C_{lip}C_{sew}\zeta (\beta )(t-s)^{\beta }.  \label{bis6}
 \end{equation}
  We consider $\mathcal{P}=\{s=s_{0}<...<s_{r}=t\}$ and prove~\eqref{bis6} by iteration on~$r$. For $r=1$, the
inequality is obvious. Let $r\ge 2$. For a fixed $i\in\{1,\dots, r-1\}$, we
denote by $\mathcal{P}_{i}$ the partition in which we have canceled $s_{i}$.
Then, we have 
\begin{equation*}
d_{\ast }(\Theta_{s,t}^{\mathcal{P}}U,\Theta_{s,t}^{\mathcal{P}%
_{i}}U)=d_{\ast }(\Theta_{s_{i+1},t}^{\mathcal{P}}ZU,\Theta_{s_{i+1},t}^{%
\mathcal{P}}Z^{\prime }U)
\end{equation*}%
with 
\begin{equation*}
Y=\Theta_{s,s_{i-1}}^{\mathcal{P}}, \quad Z=\Theta _{s_{i},s_{i+1}}\Theta
_{s_{i-1},s_{i}}Y \text{ and } Z^{\prime }=\Theta _{s_{i-1},s_{i+1}}Y.
\end{equation*}%
Using (\ref{bis1}) first and (\ref{bis2}) next we obtain%
\begin{align}
d_{\ast }(\Theta_{s,t}^{\mathcal{P}}U,\Theta_{s,t}^{\mathcal{P}_{i}}U)& \leq
C_{lip}d_{\ast }(ZU,Z^{\prime }U)=C_{lip}d_{\ast}(\Theta
_{s_{i-1},s_{i+1}}YU,\Theta _{s_{i},s_{i+1}}\Theta _{s_{i-1},s_{i}}YU))
\label{bis6'} \\
& \leq C_{lip}C_{sew}(s_{i+1}-s_{i-1})^{\beta }.  \notag
\end{align}%
We give now the sewing argument. We choose $i_{0}\in \{1,\dots, r-1\}$ such
that%
\begin{equation*}
s_{i_{0}+1}-s_{i_{0}-1}\leq \frac{2}{r-1}(t-s).
\end{equation*}%
Such an $i_{0}$ exists, otherwise we would have $2(t-s)\geq
\sum_{i=1}^{r-1}(s_{i+1}-s_{i-1})>2(t-s)).$ Using the inequality (\ref{bis6'}%
) for this $i_{0}$ we obtain%
\begin{equation*}
d_{\ast }(\Theta_{s,t}^{\mathcal{P}}U,\Theta_{s,t}^{\mathcal{P}%
_{i_{0}}}U)\leq \frac{2^{\beta }C_{lip}C_{sew}}{(r-1)^{\beta }}(t-s)^{\beta
}.
\end{equation*}%
We iterate this procedure up to the trivial partition $\{s<t\}$, and we
obtain (\ref{bis6}).

We are now in position to define $\theta_{s,t}$ as the limit of $\Theta_{s,t}^{\mathcal{P}}$
when $\left\vert \mathcal{P}\right\vert $ goes to zero. Since~$(\mathcal{E}_0(V),d_\ast)$ is
complete, it is sufficient to check the Cauchy criterion: 
\begin{equation*}
\lim_{\left\vert \mathcal{P}\right\vert \vee \left\vert \overline{\mathcal{P}%
}\right\vert \rightarrow 0}d_{\ast }(\Theta _{s,t}^{\mathcal{P}},\Theta
_{s,t}^{\overline{\mathcal{P}}})=0.
\end{equation*}%
Let $\mathcal{P}\cup \overline{\mathcal{P}}$ denote the partition of $[s,t]$
obtained by merging both partitions. Since $d_{\ast }(\Theta _{s,t}^{%
\mathcal{P}},\Theta _{s,t}^{\overline{\mathcal{P}}})\leq d_{\ast }(\Theta
_{s,t}^{\mathcal{P}},\Theta _{s,t}^{\mathcal{P}\cup \overline{\mathcal{P}}%
})+d_{\ast }(\Theta _{s,t}^{\overline{\mathcal{P}}},\Theta _{s,t}^{\mathcal{P%
}\cup \overline{\mathcal{P}}})$, we may assume without loss of generality
that $\overline{\mathcal{P}}$ is a refinement of the partition $\mathcal{P}$%
. Thus, we can write $\mathcal{P}=\{s=s_{0}<...<s_{r}=t\}$ and $\overline{%
\mathcal{P}}=\cup _{i=1}^{r}\mathcal{P}^{i}$, where $\mathcal{P}^{i}$ is a
partition of $[s_{i-1},s_{i}]$. We now introduce for $l\in \{0,\dots ,r\}$
the partition $\overline{\mathcal{P}}_{l}$ of $[s,t]$ defined by 
\begin{equation*}
\overline{\mathcal{P}}_{0}=\mathcal{P} \text{ and } \overline{\mathcal{P}}_{l}=\left( \cup _{i=1}^{l}\mathcal{P}^{i}\right) \cup 
\mathcal{P} \text{ for } l\ge 1.
\end{equation*}%
So, $\overline{\mathcal{P}}_{l}$ is the partition in which we refine the
intervals $[s_{i-1},s_{i}]$, $i=1,\dots ,l$, according to $\overline{\mathcal{P}}$ but we do not refine the intervals $[s_{i-1},s_{i}]$, $i=l+1,\dots ,r$ (we keep them unchanged, as they are in $\mathcal{P}$). Thus, we have $\overline{%
\mathcal{P}}_{0}=\mathcal{P}$ and $\overline{\mathcal{P}}_{r}=\overline{%
\mathcal{P}}$, and we obtain by using the triangle inequality: 
\begin{equation*}
d_{\ast }(\Theta _{s,t}^{\mathcal{P}},\Theta _{s,t}^{\overline{\mathcal{P}}%
})\leq \sum_{l=0}^{r-1}d_{\ast }(\Theta _{s,t}^{\overline{\mathcal{P}}%
_{l+1}},\Theta _{s,t}^{\overline{\mathcal{P}}_{l}}).
\end{equation*}%
We note $\varphi _{l}=\Theta _{s_{l+1},t}^{\overline{\mathcal{P}}},\psi
_{l}=\Theta _{s,s_{l}}^{\mathcal{P}}$ and have 
\begin{equation*}
d_{\ast }(\Theta _{s,t}^{\overline{\mathcal{P}}_{l+1}},\Theta _{s,t}^{%
\overline{\mathcal{P}}_{l}})=d_{\ast }(\varphi _{l}\Theta _{s_{l},s_{l+1}}^{%
\mathcal{P}^{l}}\psi _{l},\varphi _{l}\Theta _{s_{l},s_{l+1}}\psi _{l}).
\end{equation*}%
Using first (\ref{bis1}) and then (\ref{bis6}), we obtain 
\begin{equation*}
d_{\ast }(\Theta _{s,t}^{\overline{\mathcal{P}}_{l+1}},\Theta _{s,t}^{%
\overline{\mathcal{P}}_{l}})\leq C_{lip}d_{\ast }(\Theta
_{s_{l},s_{l+1}}^{\mathcal{P}^{l}}\psi _{l},\Theta _{s_{l},s_{l+1}}\psi
_{l})\leq C(s_{l+1}-s_{l-1})^{\beta },
\end{equation*}%
with $C=2^{\beta }C_{lip}^{2}C_{sew}\zeta (\beta )$. This leads to 
\begin{equation*}
d_{\ast }(\Theta _{s,t}^{\mathcal{P}},\Theta _{s,t}^{\overline{\mathcal{P}}%
})\leq C\sum_{l=0}^{r-1}(s_{l+1}-s_{l-1})^{\beta }\le C(t-s)\left\vert \mathcal{%
P}\right\vert ^{\beta -1}\underset{|\mathcal{P}|\rightarrow 0}{\rightarrow }%
0.
\end{equation*}%
This shows the existence of $\theta_{s,t}$ together with~\eqref{bis5}. We then get easily~\eqref{bis4'} :  by sending $|\cP|\to 0$, we get the Lipschitz property of~$\theta_{s,t}$ from~\eqref{bis1}.  Moreover, we get~\eqref{bis4} $d_\ast(\theta_{s,t}U,\Theta_{s,t}U)\le 2^{\beta}C_{lip}C_{sew}\zeta (\beta )(t-s)^{\beta }$ from~\eqref{bis6} when $|\cP|\to 0$, and we simply take $U=Id$.

We now prove the flow property. Let $s,u,t$ be such that $0\leq s<u<t\leq T$, $\cP_1$ and $\cP_2$ be respectively a partition of  $[s,u]$ and  $[u,t]$. We have by using the triangle inequality,~\eqref{bis1} and~\eqref{dist_compatibility}
\begin{align*}
  d_{\ast}(\Theta^{\cP_2}_{u,t} \Theta^{\cP_1}_{s,u} , \theta_{u,t}\theta_{s,u})&\le d_{\ast}(\Theta^{\cP_2}_{u,t} \Theta^{\cP_1}_{s,u} ,  \Theta^{\cP_2}_{u,t} \theta_{s,u})+ d_{\ast}(\Theta^{\cP_2}_{u,t} \theta_{s,u} , \theta_{u,t}\theta_{s,u}) \\
  &\le C_{lip}d_{\ast}( \Theta^{\cP_1}_{s,u} ,  \theta_{s,u})+C(\theta_{s,u})d_{\ast}(\Theta^{\cP_2}_{u,t}  , \theta_{u,t} ) \to 0,
\end{align*}
as  $|\cP_1|\vee  |\cP_2|\to 0$. The concatenation $\cP_1\cup\cP_2$ is a partition of $[s,t]$ and thus $d_{\ast}(\Theta^{\cP_2}_{u,t} \Theta^{\cP_1}_{s,u},\theta_{s,t})\to 0$. We get $d_{\ast}(\theta_{s,t} , \theta_{u,t}\theta_{s,u})=0$ and so $\theta_{s,t} = \theta_{u,t}\theta_{s,u}$. 

We finally prove the uniqueness. Let $\tilde{\theta}_{s,t}\in \mathcal{E}_0(V)$, $0\le s\le t \le T$, be a family satisfying the flow property $\tilde{\theta}_{s,t} = \tilde{\theta}_{u,t}\tilde{\theta}_{s,u}$, $d_{\ast}(\tilde{\theta}_{s,t}U,\Theta_{s,t}U)\le C(t-s)^\beta$ for any $U\in \mathcal{E}^\Theta_s$. We consider a partition $\mathcal{P}=\{s=s_{0}<...<s_{r}=t\}$ and have by using first the flow property and the  triangle inequality, second the Lipschitz property: 
\begin{align*}
  d_\ast(\tilde{\theta}_{s,t},\Theta^{\cP}_{s,t})&\le \sum_{i=0}^{r-1}d_\ast(\Theta^{\cP}_{s_i,t}\tilde{\theta}_{s,s_i},\Theta^{\cP}_{s_{i+1},t}\tilde{\theta}_{s,s_{i+1}})\\&
  \le C\sum_{i=0}^{r-1}d_\ast(\Theta_{s_i,s_{i+1}} \tilde{\theta}_{s,s_i},\tilde{\theta}_{s_i,s_{i+1}}\tilde{\theta}_{s,s_i})
\end{align*}
Now, we observe that $d_\ast(\tilde{\theta}_{s,s_i},I_d)\le C T^\beta + d_\ast(\Theta_{s,s_i},I_d)\le  C T^\beta +D^\Theta(T)=:R(T)$ by using~\eqref{bis0}. Thanks to the uniform bound~\eqref{dist_compatibility2}, we get $d_\ast(\Theta_{s_i,s_{i+1}} \tilde{\theta}_{s,s_i},\tilde{\theta}_{s_i,s_{i+1}}\tilde{\theta}_{s,s_i})\le \bar{C}_{ R(T)} d_\ast(\Theta_{s_i,s_{i+1}}  ,\tilde{\theta}_{s_i,s_{i+1}})$ and thus
$$  d_\ast(\tilde{\theta}_{s,t},\Theta^{\cP}_{s,t})\le C^2 \bar{C}_{R(T)}\sum_{i=0}^{r-1}(s_{i+1}-s_i)h(s_{i+1}-s_i) \le C^2 \bar{C}_{ R(T)}(t-s)h(|\cP|).$$
This yields to $\theta_{s,t}=\tilde{\theta}_{s,t}$ by taking $|\cP|\to 0$.
\end{proof}

\begin{remark}
The hypothesis (\ref{bis2}) may be weakened by replacing $(t-s)^{\beta }$ by $(t-s) (1 \vee \left\vert \ln (t-s)\right\vert ^{-\rho })$ for some $\rho >1$. The
proof is exactly the same by using the fact that the series $\sum_{n} \frac 1{n(\ln n)^{\rho }}$ converges iff $\rho >1$. But in this case, the estimates in (\ref{bis4}) and (\ref{bis5}) are
less explicit. So, we keep (\ref{bis2}) which is verified in our framework of Section~\ref{jump}
with $\beta =2$.
\end{remark}
\begin{remark}\label{Rk_S} We observe from the proof of Lemma~\ref{Sewing} that the uniform bound~\eqref{dist_compatibility2} on the distance and Hypothesis~\eqref{bis0} are only needed for the uniqueness result of Lemma~\ref{Sewing}.
\end{remark}

We now present a typical setting that falls into our framework. 
We consider a complete metric space $(V,d)$ and  $v_0$ be a fixed element of $V$.
We define
\begin{equation}\label{def_E0_metric}\mathcal{E}_0(V)=\{\Theta\in \mathcal{E}(V): \sup_{v \in V}\frac{d(v_0,\Theta(v))}{1+d(v_0,v)} <\infty \},
\end{equation}
the set of endomorphisms with sublinear growth. We endow $\mathcal{E}_0(V)$ with the following distance
\begin{equation}\label{def_d*_metric}
  d_{\ast }(\Theta,\overline{\Theta})=\sup_{v \in V}\frac{d(\Theta(v),\overline{\Theta}(v))}{1+d(v_0,v)}, \  \Theta,\overline{\Theta}\in\mathcal{E}_0(V) .
\end{equation}
It is clear from the definition of~$\mathcal{E}_0(V)$ that $d_{\ast }(\Theta,\overline{\Theta})<\infty$ and the distance properties of $d_\ast$ are obviously inherited from those of~$d$. We remark also that if $\Theta_1\in\mathcal{E}_0(V)$ and $\Theta_2\in \mathcal{E}(V)$ is such that $d_\ast(\Theta_1,\Theta_2)<\infty$, then $\Theta_2\in \mathcal{E}_0(V)$. Besides, we check also easily  that $Id\in\mathcal{E}_0(V)$ and $(\mathcal{E}_0(V),\circ)$ is a group: for $\Theta_1,\Theta_2\in\mathcal{E}_0(V)$,
$$ \sup_{v \in V}\frac{d(v_0,\Theta_2(\Theta_1(v)))}{1+d(v_0,v)}\le  \sup_{v \in V}\frac{d(v_0,\Theta_2(\Theta_1(v)))}{1+d(v_0,\Theta_1(v))} \sup_{v \in V}\frac{1+d(v_0,\Theta_1(v))}{1+d(v_0,v)} <\infty.$$

\begin{lemma}
\label{endo_complete} $(\mathcal{E}_0(V),d_{\ast })$ defined by~\eqref{def_E0_metric} and~\eqref{def_d*_metric} is a complete metric space. Besides, \eqref{dist_compatibility2} holds.
\end{lemma}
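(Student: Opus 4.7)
The plan is to first establish completeness by a standard pointwise argument, and then verify \eqref{dist_compatibility2} by a short triangle-inequality computation that produces an explicit constant $\bar{C}_R = 1+R$.

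For completeness, let $(\Theta_n)_{n\ge 1}$ be a Cauchy sequence in $(\mathcal{E}_0(V),d_\ast)$. For any fixed $v\in V$, the inequality $d(\Theta_n(v),\Theta_m(v))\le (1+d(v_0,v))\,d_\ast(\Theta_n,\Theta_m)$ shows that $(\Theta_n(v))$ is Cauchy in the complete metric space $(V,d)$, so it converges to some element that we denote $\Theta(v)$. This defines a map $\Theta:V\to V$. To check $d_\ast(\Theta_n,\Theta)\to 0$, fix $\varepsilon>0$ and $N$ such that $d_\ast(\Theta_n,\Theta_m)\le \varepsilon$ for $n,m\ge N$; then for $n\ge N$ and any $v\in V$, using continuity of $d$ and passing to the limit $m\to\infty$,
\begin{equation*}
  \frac{d(\Theta_n(v),\Theta(v))}{1+d(v_0,v)}=\lim_{m\to\infty}\frac{d(\Theta_n(v),\Theta_m(v))}{1+d(v_0,v)}\le \varepsilon,
\end{equation*}
and taking the supremum in $v$ gives $d_\ast(\Theta_n,\Theta)\le \varepsilon$. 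To conclude we must verify that $\Theta\in\mathcal{E}_0(V)$: since $d_\ast(\Theta_N,\Theta)\le \varepsilon<\infty$ and $\Theta_N\in\mathcal{E}_0(V)$, the remark following the definition of $\mathcal{E}_0(V)$ (namely, that $\mathcal{E}_0(V)$ is closed under bounded $d_\ast$-perturbations) applies and yields $\Theta\in\mathcal{E}_0(V)$.

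For \eqref{dist_compatibility2}, fix $U$ with $d_\ast(U,Id)\le R$. For every $v\in V$ the triangle inequality gives $d(v_0,U(v))\le d(v_0,v)+d(v,U(v))\le d(v_0,v)+(1+d(v_0,v))R$, hence
\begin{equation*}
  1+d(v_0,U(v))\le (1+R)(1+d(v_0,v)).
\end{equation*}
Using the definition of $d_\ast$ applied to the pair $(\varphi,\psi)$ at the point $U(v)$,
\begin{equation*}
  d(\varphi(U(v)),\psi(U(v)))\le d_\ast(\varphi,\psi)(1+d(v_0,U(v)))\le d_\ast(\varphi,\psi)(1+R)(1+d(v_0,v)).
\end{equation*}
Dividing by $1+d(v_0,v)$ and taking the supremum in $v$ yields $d_\ast(\varphi U,\psi U)\le (1+R)\,d_\ast(\varphi,\psi)$, so $\bar{C}_R:=1+R$ does the job.

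No step should present a serious obstacle here: the only mild subtlety is making sure the candidate limit $\Theta$ lies in $\mathcal{E}_0(V)$ (rather than merely in $\mathcal{E}(V)$), but this follows immediately from the already noted stability of $\mathcal{E}_0(V)$ under perturbations that are finite in $d_\ast$.
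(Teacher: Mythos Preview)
Your proof is correct and follows essentially the same approach as the paper: a pointwise Cauchy argument for completeness, and the triangle inequality $d(v_0,U(v))\le d(v_0,v)+d(v,U(v))$ leading to the same constant $\bar C_R=1+R$ (the paper writes it as $C(U)\le 1+d_\ast(U,Id)$). Your version is slightly more careful in explicitly checking that the limit $\Theta$ lies in $\mathcal{E}_0(V)$, which the paper's proof leaves implicit.
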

\begin{proof}
  Let $\theta_n\in \mathcal{E}_0(V)$ be a sequence such that $\sup_{p,q\ge n}d_{\ast }(\theta_p ,\theta_q)\underset{n\to \infty}\to 0$. Then, for any $v \in  V$, there exists $\theta_\infty(v)\in V$ such that $d(\theta_n (v ),\theta_\infty(v ))\to 0$ since $(V,d)$ is complete.  Therefore, we have
  $d(\theta_n (v ),\theta_\infty(v ))\le \sup_{q\ge n}d(\theta_n (v ),\theta_q(v ))$, which gives $d_{\ast }(\theta_n ,\theta_\infty)\le \sup_{q\ge n} d_{\ast }(\theta_n ,\theta_q) \to 0$.

  We now consider $U,\varphi,\psi \in \mathcal{E}_0(V)$ and have
  \begin{align*}
    d_\ast(\varphi U, \psi U)&= \sup_{v\in V} \frac{d(\varphi (U(v)),\psi(U(v)))}{1+d(v_0,U(v))} \frac{1+d(v_0,U(v))}{1+d(v_0,v)}\\
    &\le C(U)d_\ast(\varphi  , \psi ),
  \end{align*}
  with $C(U)=\sup_{v\in V} \frac{1+d(v_0,U(v))}{1+d(v_0,v)}<\infty$ since $U\in \mathcal{E}_0(V)$. Using that $d(v_0,U(v))\le d(v_0,v)+d(v,U(v))$, we get $C(U)\le 1+d_\ast(U,Id)$ and we therefore obtain \eqref{dist_compatibility2}.
\end{proof}
Last, it is interesting to discuss in this setting the properties~\eqref{bis1} and~\eqref{bis2}. We first have:
\begin{equation}\label{equivH1}
  \eqref{bis1}\iff \forall v_1,v_1\in V, d(\Theta^{\cP}_{s,t}(v_1),\Theta^{\cP}_{s,t}(v_2))\le C_{lip}d(v_1,v_2). 
\end{equation}
To get the direct implication, we take $U(v)=v_1$, $\tilde{U}(v)=v_2$ and observe that $d_\ast(U,\tilde{U})=\sup_{v\in V} \frac{d(v_1,v_2)}{1+d(v_0,v)}=d(v_1,v_2)$. The other implication is clear from~\eqref{def_d*_metric}.
Besides, we have
\begin{align}
  &\exists  \tilde{C}_{sew}:V\to \R_+ ,\ d(\Theta_{s,t}(v),\Theta_{u,t}(\Theta_{s,u}(v))) \le \tilde{C}_{sew}(v)(t-s)^\beta \text{ and } \sup_{v\in V} \frac{\tilde{C}_{sew}(v)}{1+d(v_0,v)}<\infty,  \notag \\
  & \text{ and } \bar{D}:=\sup_{0\leq s \le T} \sup_{U\in \mathcal{E}_s^\Theta} d_\ast(U ,Id)<\infty \implies \eqref{bis2}. \label{implic_H2}
\end{align}
In fact, we then have by~\eqref{dist_compatibility2}
$$ d_\ast(\Theta_{s,t}U,\Theta_{u,t}\Theta_{s,u}U)\le C_{\bar{D}}  \sup_{v\in V} \frac{\tilde{C}_{sew}(v)}{1+d(v_0,v)} (t-s)^\beta,$$
which gives~\eqref{bis2} with $C_{sew}=C_{\bar{D}}  \sup_{v\in V} \frac{\tilde{C}_{sew}(v)}{1+d(v_0,v)}$.

\section{Jump type equations\label{jump}}


\subsection{Framework and assumptions}
We recall that $\mathcal{P}_{1}({\mathbb{R}}^{d})$ is the space of probability
measures $\nu $ on ${\mathbb{R}}^{d}$ such that $\int \left\vert x\right\vert d\nu (x)<\infty$, and  $W_{1}$ is the $1$-Wasserstein
distance on $\mathcal{P}_{1}({\mathbb{R}}^{d})$ defined by
\begin{equation*}
W_{1}(\mu ,\nu )=\inf_{\pi }\int_{{\mathbb{R}}^{d}}\left\vert x-y\right\vert
\pi (dx,dy)
\end{equation*}%
with the infimum taken over all the probability measures $\pi $ on ${\mathbb{R}}^{d}\times {\mathbb{R}}^{d}$ with marginals $\mu $ and~$\nu$. We will
work with endomorphisms $\Theta :\mathcal{P}_{1}({\mathbb{R}}%
^{d})\rightarrow \mathcal{P}_{1}({\mathbb{R}}^{d})$ and we denote by $%
\mathcal{E}(\mathcal{P}_{1}({\mathbb{R}}^{d}))$ the space of these endomorphisms
and
$$\mathcal{E}_0(\mathcal{P}_{1}({\mathbb{R}}^{d}))=\{ \Theta \in \mathcal{E}(\mathcal{P}_{1}({\mathbb{R}}^{d})) :  \sup_{\rho \in \mathcal{P}_{1}({%
\mathbb{R}}^{d})}\frac{ \int \left\vert v\right\vert \Theta(\rho) (dv)}{1+\int \left\vert v\right\vert \rho (dv)}<\infty \}.$$
On
this space we define the distance 
\begin{equation*}
d_{\ast }(\Theta ,\overline{\Theta })=\sup_{\rho \in \mathcal{P}_{1}({%
\mathbb{R}}^{d})}\frac{W_{1}(\Theta (\rho ),\overline{\Theta }(\rho ))}{%
1+\int_{\R^d} \left\vert v\right\vert \rho (dv)}.
\end{equation*}%
We are precisely in the framework presented at the end of Section~\ref{abs} with $V=\cP_1(\R^d)$, $d=W_1$ and $v_0=\delta_0$ is the Dirac mass at~$0$ since $W_1(\rho,\delta_0)=\int_{\R^d} \left\vert v\right\vert \rho (dv)$. It is well known that $(\cP_1(\R^d),W_1)$ is a complete metric space (see e.g. Bolley~\cite{Bolley}) and we get from Lemma~\ref{endo_complete} that $(\mathcal{E(P}_{1}({\mathbb{R}}^{d})),d_{\ast })$ is a complete space that satisfies~\eqref{dist_compatibility2}, so we can apply the results of
Section~\ref{abs}. Finally, for a random variable $X$ we denote by $\mathcal{L}(X)$ the probability law of $X$.

We define now, for $s\leq t$ the semi-flow $\Theta _{s,t}$ in the following way. We consider a
measurable space $(E,\mu )$ and three functions $b:{\mathbb{R}}^{d}\times 
\mathcal{P}_{1}({\mathbb{R}}^{d})\rightarrow {\mathbb{R}}^{d}$, $c:{\mathbb{R}}^{d}\times E\times {\mathbb{R}}%
^{d}\times 
\mathcal{P}_{1}({\mathbb{R}}^{d})\rightarrow {\mathbb{R}}^{d}$ and $\gamma :{\mathbb{R}}^{d}\times
E\times {\mathbb{R}}^{d}\times \mathcal{P}_{1}({\mathbb{R}}^{d})\rightarrow {%
\mathbb{R}}_{+}$ (we give below the precise hypothesis). We denote%
\begin{equation}
Q(v,z,u,x,\nu )=c(v,z,x,\nu )1_{\{u\leq \gamma (v,z,x,\nu )\}}, \text{ for } u\ge 0.  \label{h5}
\end{equation}%
Then, for a probability measure $\rho \in \mathcal{P}_{1}({\mathbb{R}}^{d})$
we take $X\in L^{1}(\Omega )$ with law $\mathcal{L}(X)=\rho $ and we define%
\begin{equation}
X_{s,t}(X)=X+b(X,\rho )(t-s)+\int_{s}^{t}\int_{{\mathbb{R}}^{d}\times
E\times {\mathbb{R}}_{+}}Q(v,z,u,X,\rho )N_{\rho }(dv,dz,du,dr).  \label{W2}
\end{equation}%
Here, $N_{\rho }$ is a Poisson point measure with intensity measure%
\begin{equation}
\widehat{N}_{\rho }(dv,dz,du,dr)=\rho (dv)\mu (dz)dudr.  \label{W3}
\end{equation}%
We stress that the law of $X$ appears in the intensity of the point process.
Moreover, we define 
\begin{equation}
\Theta _{s,t}(\rho )=\mathcal{L}(X_{s,t}(X)).  \label{W3'}
\end{equation}%
So $\Theta _{s,t}(\rho )$ is the law of the solution which has initial value
with distribution $\rho $. Our aim is to construct the flow
corresponding to the semi flow $\Theta _{s,t}$  by using the sewing
lemma~\ref{Sewing}.

Before going on, we precise the hypotheses that we require for the coefficients. We
make the three following assumptions:
\begin{itemize}
\item The drift coefficient $b$ is globally Lipschitz continuous: we assume
that 
\begin{equation}
\exists L_{b}\in {\mathbb{R}}_{+}^{\ast },\ \left\vert b(x,\nu )-b(y,\rho
)\right\vert \leq L_{b}(\left\vert x-y\right\vert +W_{1}(\nu ,\rho )) 
\tag{${\textbf{A}}_{1}$}  \label{lipb}
\end{equation}

\item For every $(v,x)\in {\mathbb{R}}^{d}\times {\mathbb{R}}^{d}$ there
exists a function $Q_{v,x}:{\mathbb{R}}^{d}\times E\times \R_{+}\times {%
\mathbb{R}}^{d}\times \mathcal{P}_{1}({\mathbb{R}}^{d})\rightarrow {\mathbb{R%
}}^{d}$ such that for every $v,x,v^{\prime },x^{\prime }\in {\mathbb{R}}%
^{d},\rho \in \mathcal{P}_{1}({\mathbb{R}}^{d})$ and for every $\varphi \in
C_{b}^{0}({\mathbb{R}}^{d})$%
\begin{equation}
\int_{E\times {\mathbb{R}}_{+}}\varphi (Q(v^{\prime },z,u,x^{\prime },\rho
))\mu (dz)du=\int_{E\times {\mathbb{R}}_{+}}\varphi (Q_{v,x}(v^{\prime
},z,u,x^{\prime },\rho ))\mu (dz)du. 
\tag{${\textbf{A}}_{2}$}  \label{h6e}
\end{equation}%
We assume that $(v,x,v^{\prime },z,u,x,\rho )\rightarrow Q_{v,x}(v^{\prime
},z,u,x^{\prime },\rho )$ is jointly measurable.

\item We assume that $\int |Q(0,z,u,0,\delta _{0})|\mu (dz)du<\infty $
and that there exists a constant $L_{\mu }(c,\gamma )$ such that for every $%
v_{1},x_{1},v_{2},x_{2}\in {\mathbb{R}}^{d}$ and $\rho _{1},\rho _{2}\in 
\mathcal{P}_{1}({\mathbb{R}}^{d})$%
\begin{align}
&\int_{E\times {\mathbb{R}}_{+}}\left\vert Q(v_{1},z,u,x_{1},\rho
_{1})-Q_{v_{1},x_{1}}(v_{2},z,u,x_{2},\rho _{2})\right\vert \mu (dz)du \tag{$\textbf{A}_3$}  \label{h6d} \\
&\leq L_{\mu }(c,\gamma )(\left\vert x_{1}-x_{2}\right\vert +\left\vert
v_{1}-v_{2}\right\vert +W_{1}(\rho _{1},\rho _{2})). \notag
\end{align}
\end{itemize}
We will simply say that $(\mathbf{A})$ is satisfied when these three
Assumptions~\eqref{lipb},~\eqref{h6e}, and~\eqref{h6d} are fulfilled.

\begin{remark}
  The (pseudo) Lipschitz condition~\eqref{h6d} looks at first sight surprising. In some cases such as the two-dimensional Boltzmann equation, one may simply take $Q_{v,x}=Q$ since we have a standard Lipschitz property. For some other interesting models such as the three-dimensional Boltzmann equation, we need to use a non-trivial transformation $Q_{v,x}$ satisfying~\eqref{h6e}. The difficulty comes from the fact that there is no smooth parametrisation of the unit sphere in ${\mathbb{R}}^{3}$, and Tanaka~\cite{[T1]} has been able to get around this difficulty by using such transformation.
\end{remark}

From~\eqref{lipb} and~\eqref{h6d}, we easily deduce the following sublinear
growth estimates for any $x,v\in {\mathbb{R}}^{d}$ and $\rho \in \mathcal{P}_{1}({\mathbb{R}}^{d})$:
\begin{align}
&\left\vert b(x,\rho )\right\vert \leq \left\vert b(0,\delta _{0})\right\vert
+L_{b}(\left\vert x\right\vert +W_{1}(\rho ,\delta _{0}))= \left\vert b(0,\delta _{0})\right\vert
+L_{b}\left(\left\vert x\right\vert +\int_{\R^d}|x|\rho(dx) \right),  \label{growthb}
\\
& \int_{E\times {\mathbb{R}}_{+}}\left\vert Q(v,z,u,x,\rho
)\right\vert \mu (dz)du\leq C_{\mu }(c,\gamma )(1+\left\vert v\right\vert
+\left\vert x\right\vert +W_{1}(\rho ,\delta _{0})),  \label{h6a}
\end{align}
with $C_{\mu }(c,\gamma )=L_{\mu }(c,\gamma )\vee \left( \int
|Q(0,z,u,0,\delta _{0})|\mu (dz)du\right) $. In particular (\ref{growthb})
and (\ref{h6a}) implies%
\begin{equation}
{\mathbb{E}}(\left\vert X_{s,t}(X)-X\right\vert )\leq \left[ |b(0,\delta_0)|+C_{\mu
}(c,\gamma )+(2L_{b}+3C_{\mu }(c,\gamma ))\int \left\vert v\right\vert \rho
(dv)\right] (t-s).  \label{h6b}
\end{equation}%
This ensures that $\Theta _{s,t}(\rho )\in \mathcal{P}_{1}({\mathbb{R}}^{d})$ for $\rho \in \mathcal{P}_{1}({\mathbb{R}}^{d})$ and  that $\Theta_{s,t}\in \mathcal{E}_0(\mathcal{P}_{1}({\mathbb{R}^d}))$.

\subsection{Preliminary results}

We give now a stability result for the one step Euler scheme which is a key ingredient with our approach.

\begin{lemma}\label{STABILITY} Let us assume that the coefficients $b$ and $Q$ satisfy Assumption~$(\mathbf{A})$ with constants $L_b$ and $L_\mu(c,\gamma)$. For $i=1,2$, we consider a $\R^d$-valued random variable $Z^{i}\in L^{1}$
  and a family of probability measures $f_{s,t}^{i}(dv) \in \mathcal{P}_1(\R^d)$ for $0\le s\leq t\leq T$ such that $[s,T] \ni t\mapsto f_{s,t}^{i}(dv)$ is continuous in Wasserstein distance. Let $N_{f^{i}}$ be a Poisson point processes independent of~$Z^i$ with intensity measure $f_{s,t}^{i}(dv)\mu (dz)1_{\R_{+}}(u)dudt$ and  let $(X_{s,t}^{i},t\ge s)$ be defined by
\begin{equation}
X_{s,t}^{i}(Z^{i},\rho^{i})=Z^{i}+\int_{s}^{t}b(Z^i,\rho^{i})dr+\int_{s}^{t}\int_{\R^{d}\times E\times \R_{+}}Q(v,z,u,Z^i,\rho^{i})N_{f^{i}}(dv,dz,du,dr),  \label{NEW1}
\end{equation}%
where $\rho^{i}\in \mathcal{P}_{1}({\mathbb{R}}^{d})$. Then, we have: 
\begin{align}
W_{1}(\mathcal{L}(X_{s,t}^{1}(Z^{1},\rho^{1}))&,\mathcal{L}
(X_{s,t}^{2}(Z^{2},\rho^{2})))  \leq W_{1}(\mathcal{L}(Z^{1}),\mathcal{L}(Z^{2})) +L_{\mu }(c,\gamma)\int_{s}^{t}W_{1}(f_{s,r}^{1},f_{s,r}^{2})dr\notag \\&+(L_{b}+L_{\mu }(c,\gamma)) (W_{1}(\rho^{1},\rho^{2})+W_{1}(\mathcal{L}(Z^{1}),\mathcal{L}(Z^{2})))(t-s).
 \label{NEW2''}
\end{align}
\end{lemma}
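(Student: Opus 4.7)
The approach is to realize the laws $\mathcal{L}(X^1_{s,t}(Z^1,\rho^1))$ and $\mathcal{L}(X^2_{s,t}(Z^2,\rho^2))$ on a single probability space through a carefully chosen coupling, and then to estimate the expectation of the difference directly. The naive strategy -- build a joint Poisson measure on $\R^d\times\R^d$ whose two projections are $N_{f^1}$ and $N_{f^2}$ -- would leave us facing the uncontrolled difference $Q(v_1,z,u,Z^1,\rho^1) - Q(v_2,z,u,Z^2,\rho^2)$, to which $(\mathbf{A}_3)$ does not apply. The Tanaka-style fix is to substitute $Q_{v_1,Z^1}$ in place of the second $Q$ on the coupling space, checking via $(\mathbf{A}_2)$ that this substitution preserves the marginal law of the $X^2$-copy.

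Concretely, I fix $\varepsilon>0$ and select a coupling $(Z^1,Z^2)$ with $\mathbb{E}|Z^1-Z^2|\le W_1(\mathcal{L}(Z^1),\mathcal{L}(Z^2))+\varepsilon$. Since $r\mapsto f^i_{s,r}$ is continuous in $W_1$, a standard measurable selection argument (see e.g.~\cite{Villani}) provides a measurable family $r\mapsto \pi_r\in\Pi(f^1_{s,r},f^2_{s,r})$ with $\int|v_1-v_2|\,\pi_r(dv_1,dv_2)=W_1(f^1_{s,r},f^2_{s,r})$. I then build a Poisson point measure $\tilde N$ on $\R^d\times\R^d\times E\times\R_+\times(s,\infty)$ with intensity $\pi_r(dv_1,dv_2)\,\mu(dz)\,du\,dr$, independent of $(Z^1,Z^2)$, and set
\begin{align*}
Y^1 &= Z^1+b(Z^1,\rho^1)(t-s)+\int_s^t\!\!\int Q(v_1,z,u,Z^1,\rho^1)\,\tilde N(dv_1,dv_2,dz,du,dr), \\
Y^2 &= Z^2+b(Z^2,\rho^2)(t-s)+\int_s^t\!\!\int Q_{v_1,Z^1}(v_2,z,u,Z^2,\rho^2)\,\tilde N(dv_1,dv_2,dz,du,dr).
\end{align*}

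The decisive step is the identity in law $\mathcal{L}(Y^i)=\mathcal{L}(X^i_{s,t}(Z^i,\rho^i))$. For $i=1$ the projection of $\tilde N$ onto $(v_1,z,u,r)$ is Poisson with intensity $f^1_{s,r}(dv_1)\,\mu(dz)\,du\,dr$ and is independent of $Z^1$, so the claim is immediate. For $i=2$, the law of the compound Poisson integral in $Y^2$ depends on its integrand only through the pushforward of the intensity. Integrating $(z,u)$ first against $\mu(dz)\,du$, assumption $(\mathbf{A}_2)$ (which, since $C_b^0(\R^d)$ separates probability measures on $\R^d$, extends to equality of the $(z,u)$-pushforward measures) gives that the pushforward of $Q_{v_1,Z^1}(v_2,\cdot,\cdot,Z^2,\rho^2)$ coincides with that of $Q(v_2,\cdot,\cdot,Z^2,\rho^2)$ and so does not depend on $v_1$; integrating $v_1$ out against $\pi_r$ then reduces the intensity to $f^2_{s,r}(dv_2)$, which is exactly the intensity driving $X^2_{s,t}(Z^2,\rho^2)$. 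This is the main conceptual obstacle and is precisely the purpose of introducing $Q_{v,x}$ in $(\mathbf{A}_2)$.

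Once the laws are identified, one has $W_1(\mathcal{L}(X^1_{s,t}),\mathcal{L}(X^2_{s,t}))\le \mathbb{E}|Y^1-Y^2|$, and it remains to bound the right-hand side by the triangle inequality. Assumption $(\mathbf{A}_1)$ controls the drift by $L_b(\mathbb{E}|Z^1-Z^2|+W_1(\rho^1,\rho^2))(t-s)$, and the Poisson integral contribution, after Campbell's formula, becomes $\int_s^t\!\!\int|Q(v_1,z,u,Z^1,\rho^1)-Q_{v_1,Z^1}(v_2,z,u,Z^2,\rho^2)|\,\pi_r(dv_1,dv_2)\,\mu(dz)\,du\,dr$, which is precisely the form to which $(\mathbf{A}_3)$ applies and yields $L_\mu(t-s)(\mathbb{E}|Z^1-Z^2|+W_1(\rho^1,\rho^2))+L_\mu\int_s^t W_1(f^1_{s,r},f^2_{s,r})\,dr$. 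Collecting all terms and sending $\varepsilon\to 0$ produces~\eqref{NEW2''}.
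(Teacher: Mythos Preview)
Your proof is correct and follows essentially the same coupling strategy as the paper: both build a common Poisson driver with intensity given by an optimal $W_1$-coupling $\pi_r$ of $f^1_{s,r}$ and $f^2_{s,r}$, then invoke $(\mathbf{A}_2)$ to replace one of the two $Q$-integrands by its $Q_{v,x}$-version so that $(\mathbf{A}_3)$ becomes applicable to the difference. The only cosmetic differences are that the paper represents $\pi_r$ on $[0,1]$ via a Skorokhod-type map (rather than working directly on $\R^d\times\R^d$), uses an exact optimal coupling of the initial laws (which exists, so your $\varepsilon$ is harmless but unnecessary), and applies the $Q_{v,x}$-substitution to the copy of $X^1$ rather than $X^2$---all of which are interchangeable choices.
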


\begin{proof}
  We first recall the following useful lemma.
\begin{lemma}
\label{lemSK} There exists a measurable map $\psi :[0,1)\times \mathcal{P}_{1}({\mathbb{R}}^{d})\rightarrow {\mathbb{R}}^{d}$ such that 
\begin{equation*}
\forall f\in \mathcal{P}_{1}({\mathbb{R}}^{d}), \forall \varphi :{\mathbb{R}}^{d}\rightarrow {\mathbb{R}} \text{ bounded measurable},\ \int_{0}^{1}\varphi(\psi (u,f))du=\int_{{\mathbb{R}}^{d}}\varphi (x)f(dx).
\end{equation*}
\end{lemma}
\noindent This result is stated in~\cite{[CD]} (p.~391, Lemma 5.29) in a $L^{2}$ framework, but their proof works the same in our setting.

Let $\pi_{0}^{Z} \in \mathcal{P}_1(\mathbb{R}^{d}\times \mathbb{R}^{d})$ be an optimal coupling for $W_1$ of $\mathcal{L}(Z^{1})$ and $\mathcal{L}(Z^{2})$, that is
\begin{equation*}
W_{1}(\mathcal{L}(Z^{1}),\mathcal{L}(Z^{2}))=\int_{{\mathbb{R}}^{d}\times {\mathbb{R}}^{d}}\left\vert z_{1}-z_{2}\right\vert \pi_{0}^{Z}(dz_{1},dz_{2}).
\end{equation*}%
Then, we construct a random variable $\overline{Z}=(\overline{Z}^{1},\overline{Z}^{2})$ such that $\overline{Z}\sim \pi _{0}^{Z}.$ In particular
we will have 
\begin{equation}
\E(\vert \overline{Z}^{1}-\overline{Z}^{2}\vert )=W_{1}(\mathcal{L}%
(Z^{1}),\mathcal{L}(Z^{2})).  \label{NEW2'}
\end{equation}%
Moreover, for every $s\leq t$ we consider a probability measure $\pi_{s,t}^{f}(dv_{1},dv_{2})$ on $\mathbb{R}^{d}\times \mathbb{R}^{d}$ which is
an optimal $W_1$-coupling between $f_{s,t}^{1}(dv_{1})$ and $f_{s,t}^{2}(dv_{2})$,
and we construct $\tau _{s,t}^{f}(w)=(\tau _{s,t}^{f_{1}}(w),\tau_{s,t}^{f_{2}}(w))$ which represents $\pi_{s,t}^{f}$ in the sense of Lemma~\ref{lemSK}, this means%
\begin{equation*}
\int_{0}^{1}\varphi (\tau _{s,t}^{f}(w))dw=\int_{{\mathbb{R}}^{d}\times {\mathbb{R}}^{d}}\varphi (v_{1},v_{2})\pi _{s,t}^{f}(dv_{1},dv_{2}).
\end{equation*}%
In particular, we have
\begin{equation}\label{Wass_tau}
\int_{0}^{1}\left\vert \tau _{s,t}^{f_{1}}(w)-\tau _{s,t}^{f_{2}}(w)\right\vert
dw=\int_{{\mathbb{R}}^{d}\times {\mathbb{R}}^{d}}\left\vert
v_{1}-v_{2}\right\vert \pi
_{s,t}^{f}(dv_{1},dv_{2})=W_{1}(f_{s,t}^{1},f_{s,t}^{2}).
\end{equation}%
Since $t\mapsto f^i_{s,t}$ is continuous in Wasserstein distance (hence measurable), we may
construct $\tau _{s,t}^{f}(w)$ to be jointly measurable in $(s,t,w)$ by using Corollary 5.22~\cite{Villani}.

Now, we consider $N(dw,dz,du,dr)$ a Poisson point measure on $[0,1]\times E \times {\mathbb{R}}_{+}\times {\mathbb{R}}_{+},$ with intensity measure $dw\mu (dz)dudr$.  We
stress that $\overline{Z}^{1}$ and $\overline{Z}^{2}$ are independent of the
Poisson point measure $N$. We then consider the equation 
\begin{equation*}
x_{s,t}^{i}(\overline{Z}^{i},\rho^{i})=\overline{Z}^{i}+\int_{s}^{t}b(\overline{Z}^{i},\rho^{i}) dr+\int_{s}^{t}\int_{[0,1]\times E\times {\mathbb{R}}_{+}} Q(\tau_{s,r}^{f^{i}}(w),z,u,\overline{Z}^{i},\rho^{i}) N(dw,dz,du,dr)
\end{equation*}%
and we notice that the law of $x_{s,t}^{i}(\overline{Z}^{i},\rho^{i})$
coincides with the law of $X_{s,t}^{i}(\overline{Z}^{i},\rho^{i})$ and so
with the law of $X_{s,t}^{i}(Z^{i},\rho^{i})$. We also note that these laws have a first finite moment thanks to Assumption~$(\textbf{A})$ on $(b,Q)$ and~\eqref{h6b}.
Moreover, we define 
\begin{equation*}
y_{s,t}(\overline{Z}^{1},\rho^{1})=\overline{Z}^{1}+\int_{s}^{t}b(\overline{Z}^{1},\rho^{1})dr+\int_{s}^{t}\int_{[0,1]\times E\times {\mathbb{R}}_{+}}Q_{\tau_{s,r}^{f^{2}}(w),\overline{Z}^{2}}(\tau_{s,r}^{f^{1}}(w),z,u,\overline{Z}^{1},\rho^{1})N(dw,dz,du,dr).
\end{equation*}
By~(\ref{h6e}), the law of $y_{s,t}(\overline{Z}^{1},\rho^{1})$ coincides with the law of $x_{s,t}^{1}(\overline{Z}^{1},\rho^{1})$, so with
the law of $X_{s,t}^{1}(Z^{1},\rho^{1})$. Therefore, we get by using the triangle inequality together with~(\ref{NEW2'}) and Assumption~\eqref{lipb} for the last inequality:
\begin{align*}
W_{1}(\mathcal{L}(X_{s,t}^{1}(Z^{1},\rho^{1})),\mathcal{L}(X_{s,t}^{2}(Z^{2},\rho^{2}))) &=W_{1}(\mathcal{L}(y_{s,t}(\overline{Z}^{1},\rho^{1})),\mathcal{L}(x_{s,t}^{2}(\overline{Z}^{2},\rho^{2}))) \\
&\leq \E\left(\left\vert y_{s,t}(\overline{Z}^{1},\rho^{1})-x_{s,t}^{2}(%
\overline{Z}^{2},\rho^{2})\right\vert \right) \\
&\leq W_{1}(\mathcal{L(}Z^{1}),\mathcal{L}(Z^{2}))+L_{b}\left(W_{1}(\rho^{1},\rho^{2})+\E(\vert \overline{Z}^{1}-\overline{Z}^{2}\vert
)\right)(t-s) \\
& \ \ +\Delta _{s,t},
\end{align*}
with%
\begin{align*}
\Delta_{s,t} &=\E\int_{s}^{t}\int_{[0,1]\times E\times {\mathbb{R}}_{+}}\left\vert Q_{\tau _{s,r}^{f^{2}}(w),\overline{Z}^{2}}(\tau
_{s,r}^{f^{1}}(w),z,u,\overline{Z}^{1},\rho^{1})-Q(\tau_{s,r}^{f^{2}}(w),z,u,\overline{Z}^{2},\rho^{2}) \right\vert dw\mu (dz)dudr.
\end{align*}
Using the pseudo-Lipschitz property (\ref{h6d}) and~\eqref{Wass_tau}, we obtain%
\begin{align*}
  \Delta _{s,t}^{1} &\leq L_{\mu }(c,\gamma )\left(\int_{s}^{t}\left[W_{1}(\rho^{1},\rho^{2})+\E
    (\vert \overline{Z}^{1}-\overline{Z}%
^{2}\vert )+\int_{0}^{1}\left\vert \tau _{r}^{f^{1}}(w)-\tau
_{r}^{f^{2}}(w)\right\vert dw\right]dr \right) \\
&=L_{\mu }(c,\gamma )\left( [W_{1}(\rho^{1},\rho^{2})+W_{1}(\mathcal{L}(Z^{1}),\mathcal{L}(Z^{2}))] (t-s)+\int_{s}^{t} W_{1}(f_{s,r}^{1},f_{s,r}^{2})dr \right),
\end{align*}
which leads to  (\ref{NEW2''}).
\end{proof}

\subsection{Flows of measures}\label{Flow}

We go on and prove the sewing and Lipschitz properties for the semi-flow $%
\Theta_{s,t}$ defined in (\ref{W3'}).

\begin{lemma}\label{lem_sew_lip}
Suppose that $\mathbf{(A)}$ holds. Then, for every $0\le s<u<t\le T$ and for every $%
\rho \in \mathcal{P}_{1}({\mathbb{R}}^{d})$%
\begin{equation}
W_{1}(\Theta _{s,t}(\rho ),\Theta _{u,t}(\Theta _{s,u}(\rho )))\leq \tilde{C}_{sew}(\rho) (t-s)^{2}  \label{W1}
\end{equation}%
with%
\begin{equation}
\tilde{C}_{sew}(\rho)=(L_{b}+2L_{\mu }(c,\gamma ))\left[ |b(0)|+C_{\mu }(c,\gamma
)+(L_{b}+2C_{\mu }(c,\gamma ))\int \left\vert v\right\vert \rho (dv)\right]
\label{W1a}
\end{equation}%
Moreover, for every $\rho ,\xi \in \mathcal{P}_{1}({\mathbb{R}}^{d})$ 
\begin{eqnarray}
W_{1}(\Theta _{s,t}(\rho ),\Theta _{s,t}(\xi )) &\leq &C_{lip}(T) W_{1}(\rho
,\xi )\quad \text{ with }  \label{W1'} \\
C_{lip}(T) &=&1+(2L_{b}+3L_{\mu }(c,\gamma ))T.  \label{W1'a}
\end{eqnarray}
\end{lemma}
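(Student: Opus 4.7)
The statement splits into the Lipschitz bound~\eqref{W1'} and the sewing bound~\eqref{W1}, which I would handle in turn. The Lipschitz bound~\eqref{W1'} follows from a direct application of Lemma~\ref{STABILITY} on the interval $[s,t]$: take $(Z^{1},Z^{2})$ to be an optimal $W_{1}$-coupling of $\rho$ and $\xi$, put $\rho^{1}=\rho$, $\rho^{2}=\xi$, and choose the time-constant intensity families $f^{1}_{s,r}\equiv\rho$, $f^{2}_{s,r}\equiv\xi$. With this choice the one-step scheme~\eqref{NEW1} reduces to~\eqref{W2}, so that $\mathcal{L}(X^{i}_{s,t}(Z^{i},\rho^{i}))=\Theta_{s,t}(\rho^{i})$, and~\eqref{NEW2''} yields
\begin{equation*}
W_{1}(\Theta_{s,t}(\rho),\Theta_{s,t}(\xi))\le W_{1}(\rho,\xi)\bigl[1+L_{\mu}(c,\gamma)(t-s)+2(L_{b}+L_{\mu}(c,\gamma))(t-s)\bigr],
\end{equation*}
which is~\eqref{W1'} after bounding $(t-s)\le T$.

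For~\eqref{W1}, Lemma~\ref{STABILITY} cannot be used as a black box: on $[u,t]$ the scheme underlying $\Theta_{s,t}(\rho)$ keeps its coefficients frozen at $(X,\rho)$, whereas the scheme underlying $\Theta_{u,t}(\Theta_{s,u}(\rho))$ freezes them at $(Y,\xi)$ with $\xi:=\Theta_{s,u}(\rho)$. I would therefore build an explicit coupling of the two laws by reproducing on $[u,t]$ the coupling technique used in the proof of Lemma~\ref{STABILITY}. Take $X\sim\rho$ and an independent Poisson measure $N_{\rho}$ on $\R^{d}\times E\times\R_{+}\times(s,\infty)$ with intensity $\rho(dv)\mu(dz)du'dr$, and set
\begin{equation*}
Y:=X+b(X,\rho)(u-s)+\int_{s}^{u}\!\!\int Q(v,z,u',X,\rho)\,N_{\rho}(dv,dz,du',dr),
\end{equation*}
so that $\mathcal{L}(Y)=\xi$; let $\tilde{X}$ extend the same formula to the interval $[s,t]$, so that $\mathcal{L}(\tilde{X})=\Theta_{s,t}(\rho)$. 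By Lemma~\ref{lemSK}, pick a jointly measurable representation $(\tau^{f_{1}}(w),\tau^{f_{2}}(w))$ of an optimal $W_{1}$-coupling of $\rho$ and $\xi$, and introduce an auxiliary Poisson measure $N$ on $[0,1]\times E\times\R_{+}\times(u,t]$ with intensity $dw\,\mu(dz)du'dr$, independent of all the above. Using the pushforward by $\tau^{f_{1}}$ together with Assumption~\eqref{h6e} applied with parameters $(v,x)=(\tau^{f_{2}}(w),Y)$, $\tilde{X}$ is unchanged in law if rewritten as
\begin{equation*}
\tilde{X}\stackrel{d}{=} Y+b(X,\rho)(t-u)+\int_{u}^{t}\!\!\int Q_{\tau^{f_{2}}(w),Y}\bigl(\tau^{f_{1}}(w),z,u',X,\rho\bigr)N(dw,dz,du',dr),
\end{equation*}
while one defines
\begin{equation*}
\tilde{Y}:=Y+b(Y,\xi)(t-u)+\int_{u}^{t}\!\!\int Q\bigl(\tau^{f_{2}}(w),z,u',Y,\xi\bigr)N(dw,dz,du',dr),
\end{equation*}
whose law is $\Theta_{u,t}(\xi)=\Theta_{u,t}(\Theta_{s,u}(\rho))$ since $\tau^{f_{2}}_{\ast}dw=\xi$.

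It then remains to estimate $\E|\tilde{X}-\tilde{Y}|$ term by term. The drift piece is bounded by $L_{b}(\E|X-Y|+W_{1}(\rho,\xi))(t-u)$ via~\eqref{lipb}, and the Poisson piece by $L_{\mu}(c,\gamma)(\E|X-Y|+2W_{1}(\rho,\xi))(t-u)$ via~\eqref{h6d} combined with the identity $\int_{0}^{1}|\tau^{f_{1}}(w)-\tau^{f_{2}}(w)|dw=W_{1}(\rho,\xi)$; both $W_{1}(\rho,\xi)$ and $\E|X-Y|$ are in turn controlled by $(u-s)\bigl[|b(0)|+C_{\mu}(c,\gamma)+(2L_{b}+3C_{\mu}(c,\gamma))\int|v|\rho(dv)\bigr]$ thanks to the one-step estimate~\eqref{h6b}, and grouping these bounds together with $(u-s)(t-u)\le (t-s)^{2}$ produces~\eqref{W1} with a constant of the announced form~\eqref{W1a} (up to the bookkeeping needed to reach the exact shape $(L_{b}+2L_{\mu})[|b(0)|+C_{\mu}+(L_{b}+2C_{\mu})\int|v|\rho(dv)]$). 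The only delicate step is the use of~\eqref{h6e} to re-express $\tilde{X}$ against the same auxiliary measure $N$ as $\tilde{Y}$: this is what allows the integrand difference to be dominated pointwise by $|X-Y|+|\tau^{f_{1}}(w)-\tau^{f_{2}}(w)|+W_{1}(\rho,\xi)$ before~\eqref{h6d} is invoked, and it is exactly the maneuver borrowed from the proof of Lemma~\ref{STABILITY}; once it is in place, the remaining estimates are routine applications of the growth bounds~\eqref{growthb}--\eqref{h6b}.
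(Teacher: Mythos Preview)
Your proposal is correct and follows the paper's approach: both estimates rest on Lemma~\ref{STABILITY}, and for the sewing bound you (rightly) rebuild the coupling of that lemma explicitly on $[u,t]$ since the process $X_{s,t}(X)$ restricted to $[u,t]$ starts at $Y$ but has its coefficients frozen at $(X,\rho)$, which does not literally match the format~\eqref{NEW1}. The paper simply invokes~\eqref{NEW2''} at this step, implicitly relying on the same adaptation (the common starting value $Y$ kills the bare $W_1(\mathcal{L}(Z^1),\mathcal{L}(Z^2))$ term so that only the $(t-u)$-weighted contributions survive), so your extra care is justified but the underlying argument is identical.
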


\begin{proof} The estimate (\ref{W1'}) is a direct consequence of the estimate~\eqref{NEW2''} obtained in Lemma~\ref{STABILITY}. Let us prove (\ref{W1}). We take $X$ a random variable with law $\rho $ and we consider $X_{s,t}(X)$ defined in (\ref{W2}). So, by
definition, $\Theta _{s,t}(\rho )=\mathcal{L}(X_{s,t}(X)).$ Moreover, we
take $u\in (s,t)$ and we denote $Y=X_{s,u}(X).$ Then we write%
\begin{equation*}
X_{s,t}(X)=Y+b(X,\rho )(t-u)+\int_{u}^{t}\int_{\R^{d}\times E\times
\R_{+}}Q(v,z,u,X,\rho )N_{\rho }(dv,dz,du,dr).
\end{equation*}%
We also denote $\overline{\rho }=\mathcal{L}(Y)=\Theta _{s,u}(\rho )$ and we
define 
\begin{equation*}
Y_{u,t}(Y)=Y+b(Y,\overline{\rho })(t-u)+\int_{u}^{t}\int_{\R^{d}\times
E\times \R_{+}}Q(v,z,u,Y,\overline{\rho })N_{\overline{\rho }}(dv,dz,du,dr).
\end{equation*}%
We notice that the law of $Y_{u,t}(Y)$ is $\Theta_{u,t}(\overline{\rho })=\Theta _{u,t}(\Theta _{s,u}(\rho ))$. Using again (\ref{NEW2''}) on $[u,t]$, it follows that 
\begin{eqnarray*}
W_{1}(\Theta _{s,t}(\rho ),\Theta _{u,t}(\Theta _{s,u}(\rho ))) &=&W_{1}\left(
\mathcal{L}(X_{s,t}(X)),\mathcal{L}(Y_{u,t}(Y))\right) \\
&\leq &(L_{b}+2L_{\mu }(c,\gamma ))(t-u)(W_{1}(\mathcal{L}(X),\mathcal{L}(Y))+W_{1}(\rho ,\overline{\rho }))\\&=&2(L_{b}+2L_{\mu }(c,\gamma ))(t-u)W_{1}(\rho ,\overline{\rho }).
\end{eqnarray*}%
By (\ref{h6b}), we get 
\begin{equation*}
W_{1}(\rho ,\overline{\rho })=W_{1}(\mathcal{L}(X),\mathcal{L}(Y))\leq
\E(\left\vert X-Y\right\vert )=\E(\left\vert X-X_{s,u}(X)\right\vert )\leq
C(u-s)
\end{equation*}%
with $C$ given in (\ref{h6b}). So (\ref{W1}) is proved. 
\end{proof}

\medskip
\noindent We are now able to use the abstract sewing lemma in order to construct the solution of our problem. 
\begin{theorem}
\label{flow}Suppose that $\mathbf{(A)}$ holds true. Then, there exists a
unique flow $\theta_{s,t} \in \mathcal{E}_0(\mathcal{P}_{1}({\mathbb{R}}^{d}))$ with $0\le s< t\le T$ such that%
\begin{equation}
\exists C_1,C>0, \ d_{\ast }(\theta_{s,t},\Theta_{s,t})\leq C_{1}(t-s)^{2}.  \label{W6}
\end{equation}%
Moreover, we have for every partition $\mathcal{P}$ of $[s,t]$ 
\begin{equation}
d_{\ast }(\theta _{s,t},\Theta _{s,t}^{\mathcal{P}})\leq
C_{2}(t-s)\left\vert \mathcal{P}\right\vert,  \label{W7}
\end{equation}%
and~\eqref{W6} and~\eqref{W7} are satisfied with $C_{1}=4\zeta (2 )C_{lip}C_{sew}$ and $C_{2}=C_{lip}C_{1}$ (see (\ref{Csew}) and (\ref{W1'a}) for the values of $C_{sew}$ and of $C_{lip}$).
Besides, $\theta $ is stationary in the sense that $\theta_{s,t}=\theta_{0,t-s}$ and the map $t\mapsto \theta _{s,t}$ is Lipschitz continuous for $d_\ast $, i.e. \begin{equation}\label{lipc0_flow}
  d_\ast(\theta _{s,t},\theta _{s,t^{\prime }})\le C|t^{\prime }-t|.
\end{equation}
Last, we have the following stability result:
\begin{equation}\forall 0\le s\le t \le T, \rho,\xi \in \cP_1(\R^d),\quad  W_1(\theta_{s,t}(\rho),\theta_{s,t}(\xi))\le  \exp\left[ (2L_b+3L_\mu(c,\gamma))(t-s) \right] W_1(\rho,\xi).\label{stab_flot}
\end{equation}
\end{theorem}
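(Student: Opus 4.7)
The plan is to apply the abstract sewing lemma (Lemma~\ref{Sewing}) in the metric setting $V=\mathcal{P}_1(\R^d)$, $d=W_1$, $v_0=\delta_0$, which is covered by Lemma~\ref{endo_complete}. Once hypotheses $(\mathbf{H}_0),(\mathbf{H}_1),(\mathbf{H}_2)$ are verified for the semi-flow $\Theta_{s,t}$ of~\eqref{W3'}, existence, uniqueness, and the estimates~\eqref{W6}--\eqref{W7} follow from~\eqref{bis4}--\eqref{bis5}; the uniqueness statement of the sewing lemma applies because $C_1(t-s)^2 = (t-s)h(t-s)$ with $h(\tau)=C_1\tau\to 0$. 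Hypothesis $(\mathbf{H}_0)$ is immediate from~\eqref{h6b}, which gives $d_\ast(\Theta_{s,t},Id)\le C(t-s)\le CT$. For $(\mathbf{H}_1)$ I use the equivalence~\eqref{equivH1}: iterating the single-step stability~\eqref{W1'} along any partition yields the product bound $\prod_i(1+(2L_b+3L_\mu(c,\gamma))(s_{i+1}-s_i))\le \exp[(2L_b+3L_\mu(c,\gamma))T]$, a uniform-in-$\mathcal{P}$ Lipschitz constant for $\Theta^{\mathcal{P}}_{s,t}:(\mathcal{P}_1(\R^d),W_1)\to(\mathcal{P}_1(\R^d),W_1)$. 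For $(\mathbf{H}_2)$ I invoke~\eqref{implic_H2}: the pointwise estimate~\eqref{W1} of Lemma~\ref{lem_sew_lip} provides $\tilde C_{sew}(\rho)$ with $\sup_\rho \tilde C_{sew}(\rho)/(1+\int|v|\rho(dv))<\infty$ thanks to~\eqref{W1a}, and a discrete Gronwall argument on $m_i=\int|v|\Theta^{\mathcal{P}}_{s,s_i}(\rho)(dv)$ applied to~\eqref{h6b} yields $m_i\le e^{CT}(1+m_0)$, which after summation gives the required $\bar D=\sup_{U\in\mathcal{E}^\Theta_s}d_\ast(U,Id)<\infty$.

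For stationarity, I observe that the definition~\eqref{W2}--\eqref{W3'} makes $\Theta_{s,t}$ depend only on $t-s$ since the Poisson intensity is time-translation invariant on $\R_+$. Setting $\tilde\theta_{s,t}:=\theta_{0,t-s}$, the family $\tilde\theta$ inherits the flow property from $\theta$ and satisfies $d_\ast(\tilde\theta_{s,t},\Theta_{s,t})=d_\ast(\theta_{0,t-s},\Theta_{0,t-s})\le C_1(t-s)^2$, so uniqueness forces $\tilde\theta=\theta$. For the time-regularity~\eqref{lipc0_flow}, for $t<t'$ I use the flow property $\theta_{s,t'}=\theta_{t,t'}\theta_{s,t}$ and~\eqref{dist_compatibility2} to write
\begin{equation*}
d_\ast(\theta_{s,t'},\theta_{s,t})=d_\ast(\theta_{t,t'}\theta_{s,t},Id\circ\theta_{s,t})\le \bar C_{R(T)}\,d_\ast(\theta_{t,t'},Id),
\end{equation*}
where $R(T)$ is the uniform bound on $d_\ast(\theta_{s,t},Id)\le d_\ast(\theta_{s,t},\Theta_{s,t})+d_\ast(\Theta_{s,t},Id)\le C_1T^2+CT$ derived from~\eqref{W6} and $(\mathbf{H}_0)$. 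The same decomposition bounds $d_\ast(\theta_{t,t'},Id)\le C_1(t'-t)^2+C(t'-t)\le C(T)(t'-t)$, yielding the desired Lipschitz estimate.

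Finally, the stability~\eqref{stab_flot} follows by passing to the limit in the Euler scheme. Iterating~\eqref{W1'} along any partition gives
\begin{equation*}
W_1(\Theta^{\mathcal{P}}_{s,t}(\rho),\Theta^{\mathcal{P}}_{s,t}(\xi))\le \prod_i\bigl(1+(2L_b+3L_\mu(c,\gamma))(s_{i+1}-s_i)\bigr)\,W_1(\rho,\xi)\le e^{(2L_b+3L_\mu(c,\gamma))(t-s)}\,W_1(\rho,\xi),
\end{equation*}
and by~\eqref{W7} applied pointwise I have $W_1(\Theta^{\mathcal{P}}_{s,t}(\eta),\theta_{s,t}(\eta))\le d_\ast(\Theta^{\mathcal{P}}_{s,t},\theta_{s,t})(1+\int|v|\eta(dv))\to 0$ for $\eta\in\{\rho,\xi\}$, so the triangle inequality lets the bound pass to the limit. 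The main obstacle is verifying $(\mathbf{H}_1)$ together with $\bar D<\infty$ \emph{uniformly} in the refinement $\mathcal{P}$; this is exactly where the multiplicative/additive form of the single-step estimates~\eqref{W1'} and~\eqref{h6b} must be iterated carefully so that exponentiation (Gronwall) produces a constant independent of $\mathcal{P}$ rather than one growing with the number of intervals.
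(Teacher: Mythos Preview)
Your proof is correct and follows essentially the same route as the paper: verify $(\mathbf{H}_0)$ from~\eqref{h6b}, $(\mathbf{H}_1)$ by iterating the one-step Lipschitz bound~\eqref{W1'} and invoking~\eqref{equivH1}, $(\mathbf{H}_2)$ via~\eqref{implic_H2} combined with~\eqref{W1} and a discrete Gronwall on first moments, then read off existence, uniqueness,~\eqref{W6},~\eqref{W7} and~\eqref{stab_flot} from the sewing lemma and a passage to the limit.

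The one place where you diverge is the time-Lipschitz estimate~\eqref{lipc0_flow}. The paper proves it at the level of the Euler scheme, iterating~\eqref{h6b} along a partition containing $t'$ to bound $W_1(\Theta^{\mathcal{P}}_{s,t}(\rho),\Theta^{\mathcal{P}}_{s,t'}(\rho))$ by $\overline{C}|t-t'|$ (with the moment control~\eqref{moment_theta}), and then sends $|\mathcal{P}|\to 0$. Your argument instead works directly on the limiting flow: use $\theta_{s,t'}=\theta_{t,t'}\theta_{s,t}$ together with~\eqref{dist_compatibility2} to reduce to bounding $d_\ast(\theta_{t,t'},Id)$, which follows from~\eqref{W6} and the $(\mathbf{H}_0)$ bound $d_\ast(\Theta_{t,t'},Id)\le C(t'-t)$. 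This is a legitimate shortcut that avoids revisiting the scheme, at the cost of a slightly less explicit constant (it picks up the factor $\bar C_{R(T)}$).
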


\begin{proof}
  We will use the sewing lemma~\ref{Sewing} and check the different assumptions. \eqref{bis0} is a straightforward consequence of~\eqref{h6b}.
  From~\eqref{NEW2''}, we get that $$W_1(\Theta_{s,t}(\rho),\Theta_{s,t}(\xi))\le [1+(L_b+L_\mu(c,\gamma))(t-s)]W_1(\rho,\xi)\le \exp\left( (2L_b+3L_\mu(c,\gamma))(t-s)\right) W_1(\rho,\xi).$$  By iterating, this gives for any partition $\cP$ of $[s,t]$
  \begin{equation}\label{estim_flot}
    W_1(\Theta^\cP_{s,t}(\rho),\Theta^\cP_{s,t}(\xi))\le \exp\left[ (2L_b+3L_\mu(c,\gamma))(t-s)\right] W_1(\rho,\xi).
  \end{equation}
Property~\eqref{bis1} thus holds by using~\eqref{equivH1}.
 
  We now check~\eqref{bis2} and will use the recipe given by~\eqref{implic_H2}. We get from~\eqref{W1}:
  \begin{align*}
&    d_\ast(\Theta_{s,t}\Theta_{r,s}^{\mathcal{P}^{\prime }} , \Theta_{u,t} \Theta_{s,u} \Theta_{r,s}^{\mathcal{P}^{\prime }})\\&\le  (L_b+2L_\mu(c,\gamma))(s-t)^2\sup_{\rho \in \mathcal{P}_1(\R^d)} \frac{|b(0)|+ C_\mu(c,\gamma) + (L_b+2C_\mu(c,\gamma))\int_{\R^d}|v|\Theta_{r,s}^{\mathcal{P}^{\prime }}(\rho)(dv) }{1+\int_{\R^d}|v|\rho(dv)}
  \end{align*}
We now observe that for $\cP'=\{r=r_0<\dots<r_m=s\}$, we have by~\eqref{growthb} and~\eqref{h6a}
$$\int_{\R^d}|v|\Theta_{r,r_{i+1}}^{\mathcal{P}^{\prime }}(\rho)(dv)\le\left( 1+(2L_b+3C_\mu(c,\gamma))(r_{i+1}-r_i) \right) \int_{\R^d}|v|\Theta_{r,r_{i}}^{\mathcal{P}^{\prime }}(\rho)(dv)+ (r_{i+1}-r_i)[|b(0,\delta_0)|+C_\mu(c,\gamma)].$$
   By  iterating this inequality, we get \begin{equation}\label{moment_theta}\int_{\R^d}|v|\Theta_{r,s}^{\mathcal{P}^{\prime }}(\rho)(dv)\le\exp((2L_b+3C_\mu(c,\gamma)) T) \left( \int_{\R^d}|v|\rho(dv) + [|b(0,\delta_0)|+C_\mu(c,\gamma)]T \right).
  \end{equation}
  Therefore the supremum over~$\rho$ is upper bounded by some constant $\overline{C}>0$, and \eqref{bis2} holds with $\beta=2$ and
  \begin{equation}\label{Csew}
    C_{sew}=   (L_b+2L_\mu(c,\gamma))\overline{C} .
  \end{equation}
We can thus apply Lemma~\ref{Sewing} to get the existence and uniqueness of the flow~$\theta$. Then, we easily get~\eqref{stab_flot} from~\eqref{estim_flot} and the stationarity is a simple consequence of the obvious fact that $\Theta_{s,t}=\Theta _{0,t-s}$.

To prove the Lipschitz property~\eqref{lipc0_flow}, we consider $t'\in [s,t]$ and get by iterating the upper bound~\eqref{h6b}
\begin{equation*}
W_1(\Theta _{s,t}^{\mathcal{P}}(\rho ),\Theta _{s,t^{\prime }}^{%
\mathcal{P}}(\rho ))\le \left[ |b(0)|+C_\mu(c,\gamma) + (L_b+2C_\mu(c,\gamma)) \max_{u \in \mathcal{P}} \int_{\R^d} |v| \Theta_{s,u} (\rho)(dv)  \right]|t-t^{\prime }|,
\end{equation*}%
for any partition $\mathcal{P}$ of $[0,T]$ such that $t'\in \mathcal{P}$, since $\Theta _{s,t}^{\mathcal{P}}(\rho )$ can be obtained from $\Theta_{s,t'}^{\mathcal{P}}(\rho )$ by applying the Euler scheme. Using~\eqref{moment_theta}, we get $d_\ast(\Theta _{s,t}^{\mathcal{P}}(\rho ),\Theta _{s,t^{\prime }}^{\mathcal{P}}(\rho ))\le \overline{C}|t'-t|$, and we conclude by sending $|\mathcal{P}|\to 0$.   
\end{proof}

\subsection{The weak equation\label{weak}}

Theorem~\ref{flow} provides a unique solution in terms of flows. Now we
prove that this solution solves an integral equation, in the weak sense. 
Namely, for every $s\geq 0$ and $\rho \in \mathcal{P}({\mathbb{R}}^{d})$\ we
associate the weak equation 
\begin{align}
\int_{{\mathbb{R}}^{d}}\varphi (x)\theta _{s,t}(\rho )(dx) =&\int_{{\mathbb{%
R}}^{d}}\varphi (x)\rho (dx)+\int_{s}^{t}\int_{{\mathbb{R}}^{d}}\left\langle
b(x,\theta _{s,r}(\rho )),\nabla \varphi (x)\right\rangle \theta _{s,r}(\rho
)(dx)dr  \label{we2} \\
&+\int_{s}^{t}\int_{{\mathbb{R}}^{d}\times {\mathbb{R}}^{d}}\Lambda
_{\varphi }(v,x,\theta _{s,r}(\rho ))\theta _{s,r}(\rho )(dx)\theta
_{s,r}(\rho )(dv)dr,\ \varphi \in C_{b}^{1}({\mathbb{R}}^{d}),  \notag
\end{align}
where 
\begin{eqnarray}
\Lambda _{\varphi }(v,x,\rho ) &=&\int_{E\times \R_{+}}(\varphi
(x+Q(v,z,u,x,\rho ))-\varphi (x))\mu (dz)du  \label{w2'} \\
&=&\int_{0}^{1}d\lambda \int_{E\times \R_{+}}\left\langle \nabla \varphi
(x+\lambda Q(v,z,u,x,\rho )),Q(v,z,u,x,\rho )\right\rangle \mu (dz)du
\label{w2''} 
\end{eqnarray}%
Here, we have written equation~\eqref{we2} with a double indexed family of probability measures $(\theta_{s,t}(dx), 0\le s\le t)$. This is to make a direct link with the flow constructed by the sewing lemma that is naturally double indexed. In the literature,  one usually rather considers the following equation for a family of
probability measures $(f_{t}(\rho ),t\geq 0)$
\begin{eqnarray}
\int_{{\mathbb{R}}^{d}}\varphi (x)f_{t}(\rho )(dx) &=&\int_{{\mathbb{R}}%
^{d}}\varphi (x)\rho (dx)+\int_{0}^{t}\int_{{\mathbb{R}}^{d}}\left\langle
b(x,f_{r}(\rho )),\nabla \varphi (x)\right\rangle f_{r}(\rho )(dx)dr
\label{we2bis} \\
&&+\int_{0}^{t}\int_{{\mathbb{R}}^{d}\times {\mathbb{R}}^{d}}\Lambda
_{\varphi }(v,x,f_{r}(\rho ))f_{r}(\rho )(dx)f_{r}(\rho )(dv)dr.  \notag
\end{eqnarray}%
The link between~\eqref{we2} and~\eqref{we2bis} is clear. If $\theta_{s,t}$ solves~\eqref{we2}, then $\theta_{0,t}$ solves~\eqref{we2bis}. Conversely, if $f_{t}$ solves~\eqref{we2bis}, then $f_{t-s}$ solves~\eqref{we2}.

We need the following preliminary lemma.
\begin{lemma}\label{lem_lambdaphi} Assume  that $\mathbf{(A)}$ holds.  If $\varphi \in C^1_b(\R^d)$, then we have
  \begin{align}
    &|\Lambda_\varphi(v,x,\rho)|\le C_\mu(c,\gamma) \|\nabla \varphi\|_\infty\left(1+|v|+|x|+\int_{\R^d} |z|\rho(dz) \right) \label{sub_lin_A4}\\
    &|\Lambda_{\varphi }(v,x,\rho)-\Lambda_{\varphi }(v',x,\rho')|\le L_\mu(c,\gamma) \|\nabla \varphi\|_\infty (|v-v'|+W_1(\rho,\rho')), \label{liplambda}
  \end{align}
   and 
  \begin{equation}\label{hyp_cont}
    \forall v \in \R^d,\rho \in \cP_1(\R^d), \ x\mapsto \Lambda_\varphi(v,x,\rho) \text{ is continuous.}
  \end{equation}
\end{lemma}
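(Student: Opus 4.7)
The plan is to handle the three bounds in order of increasing difficulty. The sublinear growth~\eqref{sub_lin_A4} follows immediately from the second representation~\eqref{w2''}: bound the integrand pointwise by $\|\nabla\varphi\|_\infty|Q(v,z,u,x,\rho)|$ and invoke the sublinear growth~\eqref{h6a} of $\int|Q|\,\mu(dz)du$, noting that $W_1(\rho,\delta_0)=\int|z|\,\rho(dz)$.

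For the Lipschitz-type estimate~\eqref{liplambda}, the key idea is to use Assumption~\eqref{h6e} to replace $Q$ by $Q_{v,x}$ in the term involving $(v',x,\rho')$. Applied to the bounded continuous function $q\mapsto \varphi(x+q)-\varphi(x)$ (whose integral is finite thanks to $\varphi$ being $\|\nabla\varphi\|_\infty$-Lipschitz and to~\eqref{h6a}), this gives $\Lambda_\varphi(v',x,\rho')=\int[\varphi(x+Q_{v,x}(v',z,u,x,\rho'))-\varphi(x)]\,\mu(dz)du$. The difference $\Lambda_\varphi(v,x,\rho)-\Lambda_\varphi(v',x,\rho')$ is then bounded in absolute value by $\|\nabla\varphi\|_\infty\int|Q(v,z,u,x,\rho)-Q_{v,x}(v',z,u,x,\rho')|\,\mu(dz)du$, and Assumption~\eqref{h6d} applied with $(v_1,x_1,\rho_1)=(v,x,\rho)$ and $(v_2,x_2,\rho_2)=(v',x,\rho')$ concludes with the factor $L_\mu(c,\gamma)(|v-v'|+W_1(\rho,\rho'))$.

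The continuity in $x$~\eqref{hyp_cont} is where the main obstacle lies, since Assumption~$(\mathbf{A})$ provides no direct continuity of $Q(v,z,u,\cdot,\rho)$ in its fourth argument. The plan is to again use~\eqref{h6e} to rewrite $\Lambda_\varphi(v,x_n,\rho)=\int[\varphi(x_n+\tilde q_n)-\varphi(x_n)]\,\mu(dz)du$ with $\tilde q_n:=Q_{v,x}(v,z,u,x_n,\rho)$, where crucially the subscript $(v,x)$ stays fixed while $x_n\to x$ sits in the fourth slot. Then~\eqref{h6d} yields $\tilde q_n\to q:=Q(v,z,u,x,\rho)$ in $L^1(\mu\otimes du)$ at rate $L_\mu(c,\gamma)|x_n-x|$. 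Split
\begin{align*}
\Lambda_\varphi(v,x_n,\rho)-\Lambda_\varphi(v,x,\rho)&=\int[\varphi(x_n+\tilde q_n)-\varphi(x_n+q)]\,\mu(dz)du\\
&\quad+\int\bigl[\varphi(x_n+q)-\varphi(x_n)-\varphi(x+q)+\varphi(x)\bigr]\,\mu(dz)du.
\end{align*}
The first term is controlled by $\|\nabla\varphi\|_\infty\|\tilde q_n-q\|_{L^1(\mu\otimes du)}\to 0$. The integrand of the second term can be rewritten via the fundamental theorem of calculus as $\int_0^1[\nabla\varphi(x_n+\lambda q)-\nabla\varphi(x+\lambda q)]\cdot q\,d\lambda$, which tends to $0$ pointwise by continuity of $\nabla\varphi$ and is dominated by the integrable function $2\|\nabla\varphi\|_\infty|q|$, so dominated convergence finishes the job. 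The essential trick to emphasize is the use of~\eqref{h6e}: rather than attempting to move $Q$ continuously in $x$ (which we cannot), we exchange it against the transformed family $Q_{v,x}$ which does enjoy a controlled $x$-dependence through~\eqref{h6d}.
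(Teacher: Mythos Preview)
Your proof is correct and follows essentially the same approach as the paper. The only minor variation is in the continuity argument~\eqref{hyp_cont}: you apply~\eqref{h6e} with the fixed subscript $(v,x)$ to rewrite $\Lambda_\varphi(v,x_n,\rho)$ in terms of $Q_{v,x}(v,z,u,x_n,\rho)$, whereas the paper applies~\eqref{h6d} first with the varying subscript $(v,x_n)$ and then uses~\eqref{h6e} to convert $Q_{v,x_n}(v,z,u,x,\rho)$ back to $Q(v,z,u,x,\rho)$; both routes lead to the same dominated-convergence term and are interchangeable.
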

\begin{proof}
  We get the first bound by using~\eqref{w2'}, $|\varphi(x+Q(v,z,u,x,\rho ))-\varphi (x))|\le \|\nabla \varphi\|_\infty |Q(v,z,u,x,\rho )|$ and~\eqref{h6a}.
   From~\eqref{h6e} we have
  $\Lambda_{\varphi }(v',x,\rho')=\int_{E\times \R_{+}}(\varphi(x+Q_{v,x}(v',z,u,x,\rho' ))-\varphi (x))\mu (dz)du$ and thus
  \begin{align*}
    |\Lambda_{\varphi }(v,x,\rho)- \Lambda_{\varphi }(v',x,\rho')|&\le\|\nabla \varphi\|_\infty \int_{E\times \R_{+}}|Q(v,z,u,x,\rho )-Q_{v,x}(v',z,u,x,\rho' )| \mu(dz)du \\
    &\le L_\mu(c,\gamma)\|\nabla \varphi\|_\infty (|v-v'|+W_1(\rho,\rho')),
  \end{align*}
  by using~\eqref{h6d}.
  \\
  
  We now prove~\eqref{hyp_cont}.
  Let $(x_n)_{n\in \N}$ be a sequence in~$\R^d$ such that $x_n\to x$. We write:
  \begin{align*}
    \Lambda_\varphi(v,x_n,\rho)&= \int_{E\times \R_+}(\varphi(x_n+Q(v,z,u,x_n,\rho))-\varphi(x_n))\mu(dz)du\\
    &=\int_{E\times \R_+}(\varphi(x_n+Q(v,z,u,x_n,\rho))-\varphi(x_n+Q_{v,x_n}(v,z,u,x,\rho)))\mu(dz)du\\& +  \int_{E\times \R_+}(\varphi(x_n+Q_{v,x_n}(v,z,u,x,\rho))-\varphi(x_n))\mu(dz)du.
  \end{align*}
  By~\eqref{h6d},  the first integral is upper bounded by $\|\nabla \varphi\|_\infty L_\mu(c,\gamma)|x-x_n|\to 0.$ By~\eqref{h6e}, the second integral is equal to
  $$\int_{E\times \R_+}(\varphi(x_n+Q(v,z,u,x,\rho))-\varphi(x_n))\mu(dz)du. $$
We have $\varphi(x_n+Q(v,z,u,x,\rho))-\varphi(x_n) \to   \varphi(x+Q(v,z,u,x,\rho))-\varphi(x)$ and $|\varphi(x_n+Q(v,z,u,x,\rho))-\varphi(x_n)|\le \|\nabla\varphi\|_\infty |Q(v,z,u,x,\rho)|  $ that is $\mu(dz)du$-integrable. 
  The dominated convergence theorem gives then $ \Lambda_\varphi(v,x_n,\rho)\to  \Lambda_\varphi(v,x,\rho)$. 
\end{proof}

\begin{theorem}
  \label{Weq}Suppose that $\mathbf{(A)}$ holds. 
Then $\theta_{s,t}$, the flow given by Theorem~\ref{flow}, satisfy Equation (\ref{we2}).
\end{theorem}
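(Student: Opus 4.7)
The plan is to pass to the limit in the Euler-scheme approximation given by~\eqref{W7}. Let $\mathcal{P}_n=\{s=s_0<\dots<s_{k_n}=t\}$ be partitions of $[s,t]$ with $|\mathcal{P}_n|\to 0$ and set $\bar\rho^n_i:=\Theta^{\mathcal{P}_n}_{s,s_i}(\rho)$. Since $\left|\int\varphi\,d\Theta^{\mathcal{P}_n}_{s,t}(\rho)-\int\varphi\,d\theta_{s,t}(\rho)\right|\le\|\nabla\varphi\|_\infty W_1(\Theta^{\mathcal{P}_n}_{s,t}(\rho),\theta_{s,t}(\rho))\to 0$ by~\eqref{W7}, it suffices to identify $\lim_n\int\varphi\,d\Theta^{\mathcal{P}_n}_{s,t}(\rho)$ as the right-hand side of~\eqref{we2}.

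To this end, I first write an It\^o/Dynkin identity for a single step. Given $\nu\in\mathcal{P}_1(\mathbb{R}^d)$, $X\sim\nu$ independent of $N_\nu$, and $Y_r:=X+b(X,\nu)(r-a)+\int_a^r\int Q(v,z,u,X,\nu)\,N_\nu(dv,dz,du,dr')$ for $r\in[a,b]$, applying It\^o's formula to $\varphi(Y_r)$ and taking expectations (so the compensated martingale part vanishes) gives
\begin{align*}
\int\varphi\,d\Theta_{a,b}(\nu)-\int\varphi\,d\nu &= \int_a^b\mathbb{E}\left[\langle b(X,\nu),\nabla\varphi(Y_r)\rangle\right]dr \\
&\quad+ \int_a^b\int\mathbb{E}[\tilde\Lambda_\varphi(v,X,\nu;Y_r)]\,\nu(dv)\,dr,
\end{align*}
with $\tilde\Lambda_\varphi(v,x,\nu;y):=\int[\varphi(y+Q(v,z,u,x,\nu))-\varphi(y)]\mu(dz)du$, so that $\tilde\Lambda_\varphi(v,x,\nu;x)=\Lambda_\varphi(v,x,\nu)$. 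Applying this to each subinterval $[s_i,s_{i+1}]$ of $\mathcal{P}_n$ with initial law $\bar\rho^n_i$ and summing telescopically, I obtain $\int\varphi\,d\Theta^{\mathcal{P}_n}_{s,t}(\rho)-\int\varphi\,d\rho=\int_s^t\phi_n(r)\,dr$, where for $r\in[s_i,s_{i+1})$, $\phi_n(r)$ is the natural expression involving $X^n_i\sim\bar\rho^n_i$ and the one-step process $Y^{n,i}_r$ started at $(s_i,X^n_i)$.

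Using~\eqref{growthb}, \eqref{h6a} and the moment bound~\eqref{moment_theta}, one checks $\sup_{n,r}|\phi_n(r)|<\infty$. For fixed $r\in(s,t)$, let $i_n$ be such that $r\in[s_{i_n},s_{i_n+1})$; then $\bar\rho^n_{i_n}\to\theta_{s,r}(\rho)$ in $W_1$ by~\eqref{W7} combined with~\eqref{lipc0_flow}, while $\mathbb{E}|Y^{n,i_n}_r-X^n_{i_n}|\to 0$ by~\eqref{h6b}. Coupling $X^n_{i_n}$ optimally to $X\sim\theta_{s,r}(\rho)$, and using~\eqref{lipb}, the continuity of $\nabla\varphi$, the properties of $\Lambda_\varphi$ in Lemma~\ref{lem_lambdaphi}, and uniform integrability inherited from the $W_1$-convergence, a Vitali-type argument shows $\phi_n(r)\to\Phi(r)$, where $\Phi(r):=\int\langle b(x,\theta_{s,r}(\rho)),\nabla\varphi(x)\rangle\theta_{s,r}(\rho)(dx)+\iint\Lambda_\varphi(v,x,\theta_{s,r}(\rho))\theta_{s,r}(\rho)(dx)\theta_{s,r}(\rho)(dv)$. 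Dominated convergence then yields $\int_s^t\phi_n(r)\,dr\to\int_s^t\Phi(r)\,dr$, which is~\eqref{we2}.

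The main obstacle lies in the pointwise convergence $\phi_n(r)\to\Phi(r)$: since $\varphi\in C^1_b$ gives only a continuous, not Lipschitz, gradient $\nabla\varphi$, one cannot directly control $\nabla\varphi(Y^{n,i_n}_r)-\nabla\varphi(X^n_{i_n})$ by $|Y^{n,i_n}_r-X^n_{i_n}|$. Convergence in probability of $Y^{n,i_n}_r$ to $X$ combined with continuity of $\nabla\varphi$ gives a pointwise limit of the integrand, and uniform integrability of $|b(X^n_{i_n},\bar\rho^n_{i_n})|$ (from the $W_1$-convergence of the marginals, which ensures convergence of first moments) permits using Vitali's theorem. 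The analogous step for the $\tilde\Lambda_\varphi$ term relies on~\eqref{hyp_cont} and~\eqref{sub_lin_A4}--\eqref{liplambda}.
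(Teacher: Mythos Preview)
Your argument is correct and follows essentially the same route as the paper: apply It\^o's formula to the Euler scheme, obtain an integral identity, and pass to the limit using the $W_1$-convergence $\Theta^{\mathcal P_n}_{s,\cdot}(\rho)\to\theta_{s,\cdot}(\rho)$ together with dominated convergence in~$r$. The only difference is in packaging the limit step: the paper first uses the Lipschitz dependence of $b$ and $\Lambda_\varphi$ on the measure argument (from~\eqref{lipb} and Lemma~\ref{lem_lambdaphi}) to replace $\Theta^{\mathcal P}_{s,\tau(r')}(\rho)$ by $\theta_{s,r'}(\rho)$ in the coefficients with an explicit $O(1/n)$ remainder, and then invokes the characterization of $W_1$-convergence (integrals of continuous sublinear functions converge, \cite[Theorem~6.9]{Villani}) to pass the limit in the integration measure; you instead handle both effects simultaneously via optimal coupling and a Vitali/uniform-integrability argument, which is equally valid and, as you note, is where the lack of Lipschitz continuity of $\nabla\varphi$ must be confronted.
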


\begin{proof}
  Let us consider $\rho \in \cP_1(\R^d)$ and the Euler scheme $\Theta _{s,s_{k}}^{\mathcal{P}}(\rho )$ associated to the partition $\mathcal{P}=\{s_{k}=s+\frac{k(t-s)}{n}:k=0,...,n\}.$ For $r\in \lbrack s_{k},s_{k+1})$ we denote $\tau(r)=s_{k}$ and associate the stochastic equation%
\begin{align}
X_{s,r}^{\mathcal{P}}=&X+\int_{s}^{r}b(X_{s,\tau (r^{\prime })}^{\mathcal{P}},\Theta _{s,\tau (r^{\prime })}^{\mathcal{P}}(\rho ))dr^{\prime} \label{we10}\\&+\int_{s}^{r}\int_{\R^{d}\times E\times \R_{+}}Q(v,z,u,X_{s,\tau (r^{\prime
})-}^{\mathcal{P}},\Theta _{s,\tau (r^{\prime })}^{\mathcal{P}}(\rho
))N_{\Theta ^{\mathcal{P}}}(dv,dz,du,dr^{\prime }), \notag
\end{align}
where $N_{\Theta ^{\mathcal{P}}}$ is a Poisson point measure of intensity $\Theta _{s,\tau (r^{\prime })}^{\mathcal{P}}(\rho )(dv)\mu (dz)dudr^{\prime}$ and $\mathcal{L}(X)=\rho .$ One has $\Theta _{s,s_{k}}^{\mathcal{P}}(\rho)(dx)=\mathcal{L}(X_{s,s_{k}}^{\mathcal{P}})$. Then, using It\^{o}'s formula in order to compute $\E(\varphi (X_{s,r}^{\mathcal{P}}))$ for $\varphi \in C_{b}^{1}({\mathbb{R}}^{d})$ we obtain %
\begin{align}
\int_{{\mathbb{R}}^{d}}\varphi (x)\Theta _{s,r}^{\mathcal{P}}(\rho
)(dx) &=\int_{{\mathbb{R}}^{d}}\varphi (x)\rho (dx)+\int_{s}^{
r}\int_{{\mathbb{R}}^{d}}\left\langle b(x,\Theta _{s,\tau (r^{\prime })}^{\mathcal{P}}(\rho )),\nabla \varphi (x)\right\rangle \Theta _{s,\tau (r')}^{\mathcal{P}}(\rho )(dx)dr'   \\
&+\int_{s}^{r}\int_{{\mathbb{R}}^{d}\times {\mathbb{R}}^{d}}\Lambda_{\varphi }(v,x,\Theta _{s,\tau (r)}^{\mathcal{P}}(\rho ))\Theta _{s,\tau
(r')}^{\mathcal{P}}(\rho )(dx)\Theta _{s,\tau (r')}^{\mathcal{P}}(\rho
)(dv)dr',  \notag
\end{align}
From~\eqref{h6b}, we get easily that $\E[|X_{s,r}^{\mathcal{P}}-X_{s,\tau(r)}^{\mathcal{P}}|]\le C/n$ for some constant $C\in \R_+^*$. Besides, we have $d_*(\theta_{s,r},\Theta _{s,r}^{\mathcal{P}})\le C(r-s)/n$ by Theorem~\ref{flow}. From~\eqref{lipb} and Lemma~\ref{lem_lambdaphi}, this leads to 
\begin{align*}
\int_{{\mathbb{R}}^{d}}\varphi (x)\Theta _{s,r}^{\mathcal{P}}(\rho
)(dx) &=\int_{{\mathbb{R}}^{d}}\varphi (x)\rho (dx)+\int_{s}^{
r}\int_{{\mathbb{R}}^{d}}\left\langle b(x,\theta_{s,r'}(\rho )),\nabla \varphi (x)\right\rangle \Theta _{s,\tau (r')}^{\mathcal{P}}(\rho )(dx)dr'  \\
&+\int_{s}^{r}\int_{{\mathbb{R}}^{d}\times {\mathbb{R}}^{d}}\Lambda_{\varphi }(v,x, \theta_{s,r'}(\rho ))\Theta _{s,\tau
(r')}^{\mathcal{P}}(\rho )(dx)\Theta _{s,\tau (r')}^{\mathcal{P}}(\rho
)(dv)dr' +R_n,  \notag
\end{align*}
with $|R_n|\le C/n$. Now, let us recall (see e.g.~\cite[Theorem 6.9]{Villani}) that $W_1(\rho_n,\rho_\infty)\to 0$ if, and only if, $\int_{\R^d} f(x) \rho_n(dx)\to\int_{\R^d} f(x) \rho_\infty(dx)$ for any continuous function $f:\R^d\to \R$ with sublinear growth (i.e. such that $\forall x, |f(x)|\le C(1+|x|)$ for some constant $C>0$). 
From~\eqref{sub_lin_A4} and~\eqref{hyp_cont} (resp.~\eqref{growthb} and~\eqref{lipb}), the map $(v,x)\mapsto \Lambda_\varphi(v,x,\theta_{s,r'}(\rho ))$ (resp. $x \mapsto \left\langle b(x,\theta_{s,r'}(\rho )),\nabla \varphi (x)\right\rangle$) is, for any $r'$, continuous with sublinear growth since $\varphi\in C^1_b(\R^d)$. Since $W_1(\Theta _{s,\tau (r')}^{\mathcal{P}}(\rho),\theta_{s,r'}(\rho))\to 0$ (and thus   $W_1(\Theta _{s,\tau (r')}^{\mathcal{P}}(\rho)\otimes\Theta _{s,\tau (r')}^{\mathcal{P}}(\rho),\theta_{s,r'}(\rho)\otimes\theta_{s,r'}(\rho))\to 0$), this gives the pointwise convergence for any~$r'$:
\begin{align*} &\int_{{\mathbb{R}}^{d}}\left\langle b(x,\theta_{s,r'}(\rho )),\nabla \varphi (x)\right\rangle \Theta _{s,\tau (r')}^{\mathcal{P}}(\rho )(dx) \to \int_{{\mathbb{R}}^{d}}\left\langle b(x,\theta_{s,r'}(\rho )),\nabla \varphi (x)\right\rangle \theta _{s,r'}(\rho )(dx),\\
&\int_{{\mathbb{R}}^{d}\times {\mathbb{R}}^{d}}\Lambda_{\varphi }(v,x, \theta_{s,r'}(\rho ))\Theta _{s,\tau
(r')}^{\mathcal{P}}(\rho )(dx)\Theta _{s,\tau (r')}^{\mathcal{P}}(\rho
)(dv) \to \int_{{\mathbb{R}}^{d}\times {\mathbb{R}}^{d}}\Lambda_{\varphi }(v,x, \theta_{s,r'}(\rho ))\theta _{s,r'}(\rho )(dx)\theta _{s,r'}(\rho
)(dv).\end{align*}
From the standard uniform bounds on the first moment, we can then get the claim by the dominated convergence theorem.
\end{proof}

\subsection{Probabilistic representation} In this subsection, we are interested in the existence and uniqueness of a process $(X_r,r\ge 0)$ on a probability space such that
    \begin{equation}\label{Prob_rep}
      X_t=X+\int_0^tb(X_r,\cL(X_r))dr+\int_0^t Q(v,z,u,X_{r-},\cL(X_{r-})) N(dv,dz,du,dr),    \end{equation}
    where $N$ is a Poisson point measure with intensity  $\cL(X_{r-})(dv)\mu(dz)dudr$. If such a process exists, we call it ``Boltzmann process''. We notice that the sublinear growth properties~\eqref{growthb} and~\eqref{h6a} gives $\sup_{t\in[0,T]}\E[|X_t|]<\infty$ for any $T>0$ and then
    \begin{equation}\label{lipsch_conti}\forall T>0,\exists C_T>0,\forall t\in[0,T], \E[|X_{t+h}-X_t|]\le C_T h,
    \end{equation}
   which yields to  $\cL(X_{t-})=\cL(X_t)$ for any $t\ge 0$. The existence of a solution to~\eqref{Prob_rep} is classically related to Martingale problems, see Horowitz and Karandikar~\cite{HoKa} in the context of the Boltzmann equation.  Our goal here is to underline some relations between the flow~$\theta$ introduced by Theorem~\ref{flow} and Equation~\eqref{Prob_rep}.

    From It\^o's Formula, it is clear that a Boltzmann process leads to a solution of the weak equation~\eqref{we2bis}. In Theorem~\ref{Weq}, we have proved under suitable conditions that the flow constructed by Theorem~\ref{flow} is a weak solution of~\eqref{we2bis}. Here, we prove that there exists a Boltzmann process that has the marginal laws given by the flow~$\theta_{0,t}$. 
\begin{theorem}
  \label{ExistenceRepr} Suppose that $\mathbf{(A)}$ holds. Then, there exists a process~$X$ that satisfies~\eqref{Prob_rep} such that $\cL(X_t)=\theta_{0,t}(\rho)$. 
\end{theorem}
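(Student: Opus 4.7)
The plan is to build the Boltzmann process by \textbf{freezing} the intensity and coefficients at the already-constructed flow $\theta_{0,\cdot}(\rho)$, solving the resulting standard jump SDE, and then identifying the marginals of the solution with $\theta_{0,t}(\rho)$ via comparison with the Euler schemes used in the sewing lemma construction.

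\textbf{Step 1 (Frozen SDE).} On a probability space, I would take $X$ with law $\rho$ and an independent Poisson point measure $N$ with intensity $\theta_{0,r}(\rho)(dv)\mu(dz)du\,dr$. This intensity is unambiguous because $r\mapsto \theta_{0,r}(\rho)$ is Lipschitz continuous in $W_1$ by~\eqref{lipc0_flow}, and in particular $\theta_{0,r-}(\rho)=\theta_{0,r}(\rho)$. I would then construct a unique $L^1$ solution $X$ to
\begin{equation*}
X_t=X+\int_0^t b(X_r,\theta_{0,r}(\rho))\,dr+\int_0^t\!\!\int Q(v,z,u,X_{r-},\theta_{0,r}(\rho))\,N(dv,dz,du,dr)
\end{equation*}
by Picard iteration. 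Since the intensity and the $\rho$-argument are frozen, the existence/uniqueness argument reduces to a standard jump-SDE estimate; the $L^1$ Lipschitz control on successive Picard iterates follows from Lemma~\ref{STABILITY} applied with $\rho^1=\rho^2=\theta_{0,\cdot}(\rho)$ and $f^1=f^2=\theta_{0,\cdot}(\rho)$ (plus the pseudo-Lipschitz condition~\eqref{h6d}).

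\textbf{Step 2 (Euler approximation of $X$).} For a partition $\mathcal{P}=\{0=s_0<\dots<s_n=t\}$, define $Y^\mathcal{P}$ by the step-wise recursion
\begin{equation*}
Y^\mathcal{P}_{s_{k+1}}=Y^\mathcal{P}_{s_k}+b(Y^\mathcal{P}_{s_k},\theta_{0,s_k}(\rho))(s_{k+1}-s_k)+\int_{s_k}^{s_{k+1}}\!\!\int Q(v,z,u,Y^\mathcal{P}_{s_k},\theta_{0,s_k}(\rho))\,N(dv,dz,du,dr),
\end{equation*}
driven by the same $N$ as $X$. Standard arguments (again based on Lemma~\ref{STABILITY}-type bounds) yield $\E|X_t-Y^\mathcal{P}_t|\to 0$ as $|\mathcal{P}|\to 0$, hence $W_1(\mathcal{L}(Y^\mathcal{P}_t),\mathcal{L}(X_t))\to 0$.

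\textbf{Step 3 (Identification, main obstacle).} Here is where the sewing construction is exploited. I would compare $\mathcal{L}(Y^\mathcal{P}_{s_k})$ to $\Theta^\mathcal{P}_{0,s_k}(\rho)$. The two schemes are built the same way, except that $Y^\mathcal{P}$ uses the frozen intensity $\theta_{0,s_k}(\rho)$ while $\Theta^\mathcal{P}$ uses the evolving intensity $\Theta^\mathcal{P}_{0,s_k}(\rho)$. Applying Lemma~\ref{STABILITY} on each subinterval $[s_k,s_{k+1}]$ with $(\rho^1,f^1)=(\theta_{0,s_k}(\rho),\theta_{0,\cdot}(\rho))$ and $(\rho^2,f^2)=(\Theta^\mathcal{P}_{0,s_k}(\rho),\Theta^\mathcal{P}_{0,\cdot}(\rho))$ yields a one-step bound of the form
\begin{equation*}
W_1(\mathcal{L}(Y^\mathcal{P}_{s_{k+1}}),\Theta^\mathcal{P}_{0,s_{k+1}}(\rho))\le(1+C(s_{k+1}-s_k))\,W_1(\mathcal{L}(Y^\mathcal{P}_{s_k}),\Theta^\mathcal{P}_{0,s_k}(\rho))+C\,W_1(\theta_{0,s_k}(\rho),\Theta^\mathcal{P}_{0,s_k}(\rho))(s_{k+1}-s_k).
\end{equation*}
By the error estimate~\eqref{W7} of the sewing construction, $\sup_kW_1(\theta_{0,s_k}(\rho),\Theta^\mathcal{P}_{0,s_k}(\rho))\to 0$ as $|\mathcal{P}|\to 0$, so a discrete Gronwall argument gives $W_1(\mathcal{L}(Y^\mathcal{P}_t),\Theta^\mathcal{P}_{0,t}(\rho))\to 0$.

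\textbf{Step 4 (Conclusion).} Combining Steps 2 and 3 with $\Theta^\mathcal{P}_{0,t}(\rho)\to\theta_{0,t}(\rho)$ (again by~\eqref{W7}), I conclude $\mathcal{L}(X_t)=\theta_{0,t}(\rho)$. Then the intensity of $N$ is exactly $\mathcal{L}(X_{r-})(dv)\mu(dz)du\,dr$, so $X$ solves~\eqref{Prob_rep}. The hard part is really Step 3: the two Euler schemes differ not only in the coefficient arguments but also in the law governing their driving Poisson measure, which is precisely what Lemma~\ref{STABILITY} is designed to absorb.
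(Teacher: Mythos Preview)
Your Steps~1 and~2 have a genuine gap under the general hypothesis~$(\mathbf{A})$. The pseudo-Lipschitz condition~\eqref{h6d} controls $\int|Q(v_1,\cdot,\cdot,x_1,\rho_1)-Q_{v_1,x_1}(v_2,\cdot,\cdot,x_2,\rho_2)|$, \emph{not} $\int|Q(v,\cdot,\cdot,x_1,\rho)-Q(v,\cdot,\cdot,x_2,\rho)|$. Lemma~\ref{STABILITY} can absorb the transformation $Q_{v,x}$ only because it compares \emph{laws}: by~\eqref{h6e} one may replace $Q$ by $Q_{v,x}$ without changing the law of the stochastic integral. Picard iteration and strong Euler convergence, however, require comparing two processes driven by the \emph{same} Poisson measure pathwise in $L^1$, and there the replacement $Q\mapsto Q_{v,x}$ changes the process itself, not merely its law. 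This is not a technicality: for the 3D Boltzmann kernel~\eqref{def_c_3D} the map $x\mapsto c(v,z,x)$ is genuinely discontinuous (the whole point of the Tanaka trick), so a direct $L^1$ contraction for Picard iterates simply does not hold.

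This is exactly why the paper takes a different route: it proves tightness of the laws of the Euler schemes~\eqref{App2.4} in the Skorohod space, extracts a limit point, identifies it as a solution of the martingale problem~\eqref{App2.7} using the convergence $\Theta^n_{0,\cdot}\to\theta_{0,\cdot}$ and Lemma~\ref{lem_lambdaphi}, and only then invokes the Jacod--Shiryaev representation to obtain~\eqref{Prob_rep}. Your Step~3 comparison of two Euler schemes \emph{in law} via Lemma~\ref{STABILITY} is sound and is essentially the argument of the Proposition immediately following Theorem~\ref{ExistenceRepr}---but note that that Proposition explicitly adds the standard Lipschitz assumption~\eqref{std_lip} (i.e.\ $Q_{v,x}=Q$) precisely to make the pathwise comparison in its proof legitimate. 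To prove Theorem~\ref{ExistenceRepr} under $(\mathbf{A})$ alone you must replace your Steps~1--2 by a weak (tightness/martingale-problem) construction.
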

 To prove this result, we consider the Euler scheme for which we know the convergence of the marginal laws by Theorem~\ref{flow}. We show by classical arguments that it gives a tight sequence in the Skorohod space and that any converging subsequence leads to a solution of the Martingale problem of~\eqref{Prob_rep}. 
 \begin{proof}
   Let $X\sim \rho $ with $\rho \in \mathcal{P}_{1}({\mathbb{R}}^{d})$.
We consider the time grid $t_{k}=\frac{k}{n}$, $k\in \N$ and we denote $\tau(t)=\frac{k}{n}$ for $\frac{k}{n}\leq t<\frac{k+1}{n}$. For $t\in \R$, we denote \begin{equation}\label{def_theta_n}\Theta^n_{0,t}=\Theta_{\frac{\lfloor nt \rfloor }{n},t} \Theta_{\frac{\lfloor nt \rfloor -1}{n},\frac{\lfloor nt \rfloor }{n}}\dots \Theta_{0,\frac 1n},
\end{equation}
with $\Theta_{s,t}$ defined by~\eqref{W3'}, so that $\Theta^n_{0,t}=\Theta^{\cP}_{0,t}$ with the partition $\cP=\{t_0<\dots<t_{\lfloor nt \rfloor}\le t\}$.
Then, we define the corresponding Euler scheme 
\begin{equation}
X_{t}^{n}=X+\int_{0}^{t}b(X_{\tau(r)}^{n},\Theta^n_{0,\tau(r)}(\rho) )dr+\int_{0}^{t}\int_{{\mathbb{R}}^{d}\times E\times {\mathbb{R}}
_{+}}Q(v,z,X_{\tau (r)},\Theta^n_{0,\tau(r)}(\rho))N_{\Theta^n}(dv,dz,du,dr),
\label{App2.4}
\end{equation}%
where $N_{\Theta^n}$ is a Poisson point measure with compensator $\Theta^n_{0,\tau (r)}(\rho)(dv)\mu (dz)dudr$ that is independent of~$X$. By construction, we have $\cL(X^n_t)=\Theta^n_{0,t}(\rho)$ for all $t\ge 0$. Theorem~\ref{flow} gives that there exists a flow $\theta_{s,t}$ corresponding to~$\Theta_{s,t}$, and we have
\begin{equation}\label{estim_unif_thetan}
  d_*(\theta_{0,t},\Theta^n_{0,t})\le \frac{C_2t}{n}.
\end{equation}

We first write the martingale problem associated with~$X^n$. 
 For $\varphi \in C_{b}^{1}(\R^{d})$, we define 
\begin{equation}
M^{n}_\varphi(t):=\varphi (X_{t}^{n})-\varphi
(X)-I_{t}^{n}-J_{t}^{n}  \label{App2.5}
\end{equation}%
with%
\begin{eqnarray*}
I_{t}^{n} &=&\int_{0}^{t} \int_{{\mathbb{R}}^{d}}\Lambda_{\varphi} (v,X_{r}^{n},\Theta^n_{0,\tau (r)}(\rho)))\Theta^n_{0,\tau (r)}(\rho)(dv)dr\\ J_{t}^{n} &=&\int_{0}^{t}\int_{{\mathbb{R}}^{d}}\left\langle
b(X_{\tau (r)}^{n},\Theta^n_{0,\tau (r)}(\rho))),\nabla \varphi (X_{r}^{n})\right\rangle dr .
\end{eqnarray*}%
This is a martingale, and we have for every $0\le s_{1}<...<s_{m}<t<t^{\prime }$ and every $\psi _{j}\in C_{b}^{0}(\R^{d})$
\begin{equation}
\E \left(\prod_{j=1}^{m}\psi _{j}(X_{\tau (s_{j})}^{n})M_{\varphi }^{n}(\tau(t^{\prime })) \right)=\E\left(\prod_{j=1}^{m}\psi _{j}(X_{\tau (t_{j})}^{n})M_{\varphi}^{n}(\tau (t)) \right).  \label{App2.6}
\end{equation}

We now analyse the convergence when $n\to \infty$ and  denote $P_{n}$ the probability measure on the
Skorohod space $\mathbb{D}(\R_{+},\R^{d})$ produced by the law of $X^{n}$. We easily check
Aldous' criterion : from~\eqref{growthb} and~\eqref{h6a}, we get a uniform bound on the first moment
\begin{equation}\label{uni_borne2}
  \forall T>0, \E\left[ \sup_{n\ge1} \sup_{t\in [0,T]} |X^n_r| \right]<\infty
\end{equation}
and then \begin{equation}\label{uni_borne3}
  \forall T>0, \exists C_T\in \R_+^*, \forall h\in [0,1], \E[\sup_{t\le T}|X^n_{t+h}-X^n_{t}|]\le C_Th.
\end{equation}
Thus, we obtain that the sequence $(P_{n})_{n\in \N}$ is
tight. Let $P$ be any limit point of this sequence. Up to consider a subsequence, we may assume that  $(P_{n})_{n\in \N}$ weakly converges to~$P$. We denote by $X_{t}$ the
canonical projections on $\mathbb{D}(\R_{+},\R^{d})$. We define 
\begin{equation}
M_{\varphi }(t):=\varphi (X_{t})-\varphi (X)-J_{t}-I_{t}
\label{App2.7}
\end{equation}%
with%
\begin{eqnarray*}
  I_{t} &=&\int_{0}^{t}\int_{{\mathbb{R}}^{d}}\Lambda _{\varphi}(v,X_{r},\theta_{0,r}(\rho))\theta_{0,r}(\rho)(dv)dr,\\
J_{t} &=&\int_{0}^{t}\int_{{\mathbb{R}}^{d}}\left\langle b(X_{r},\theta_{0,r}(\rho)),\nabla \varphi (X_{r})\right\rangle dr. 
\end{eqnarray*}%

\medskip
We now prove that $\E \left(\prod_{j=1}^{m}\psi _{j}(X_{\tau (s_{j})}^{n})M_{\varphi }^{n}(\tau(t^{\prime })) \right) \to \E_P \left(\prod_{j=1}^{m}\psi _{j}(X_{s_{j}})M_{\varphi }(t^{\prime }) \right)$, where $\E_P$ denotes the integration on $\mathbb{D}(\R_{+},\R^{d})$ with respect to~$P$. This then gives from~\eqref{App2.6}
\begin{equation}
\E\left(\prod_{j=1}^{m}\psi _{j}(X_{s_{j}})M_{\varphi }(t^{\prime})\right)=\E\left(\prod_{j=1}^{m}\psi _{j}(X_{s_{j}}) M_{\varphi }(t) \right).
\label{App2.8}
\end{equation}%
We define the intermediary terms
$$\hat{I}^n_t=\int_{0}^{t}\int_{{\mathbb{R}}^{d}}\Lambda _{\varphi}(v,X^n_{r},\theta_{0,r}(\rho))\theta_{0,r}(\rho)(dv)dr \text{ and } \hat{J}^n_t=\int_{0}^{t}\int_{{\mathbb{R}}^{d}}\left\langle b(X^n_{\tau(r)},\theta_{0,r}(\rho)),\nabla \varphi (X^n_{r})\right\rangle dr.$$
From the Lipschitz property of $b$ and $\Lambda_\varphi$ (Lemma~\ref{lem_lambdaphi}), we get
$$|I^n_{t}-\hat{I}^n_{t}|+|J^n_{t}-\hat{J}^n_{t}|\le C \int_0^{t}W_1(\theta_{0,r}(\rho),\Theta^n_{0,\tau(r)}(\rho))dr \underset{n\to \infty}\to 0,$$
by~\eqref{estim_unif_thetan} and~\eqref{uni_borne3}
Thus, it is sufficient to check the convergence of $$\E \left(\prod_{j=1}^{m}\psi _{j}(X_{\tau (s_{j})}^{n})[\varphi(X_{\tau (t')}^{n})-\varphi(X)-\hat{I}^n_{\tau (t')}-\hat{J}^n_{\tau (t')}]\right).$$
From~\eqref{sub_lin_A4}, $\Lambda_\varphi$ has a sublinear growth and is continuous with respect to~$x$. Therefore, $\mathbb{D}(\R_{+},\R^{d}) \ni \bar{x}\mapsto \int_0^{t'} \int_{\R^d} \Lambda_\varphi(v,\bar{x}(r),\theta_{0,r}(\rho))\theta_{0,r}(\rho)(dv)dr$ is continuous and bounded by $C(1+\sup_{r\in[0,t']} |\bar{x}(r)|+\sup_{r\in[0,t']}\int_{\R^d}|z|\theta_{0,r}(\rho)(dz))$ for some $C\in \R_+^*$. Since $ \E\left[ \sup_{n\ge1} \sup_{t\in [0,t']} |X^n_r| \right]<\infty$ by~\eqref{uni_borne2} and $P_n$ weakly converges to $P$, this gives the desired convergence for $s_1,\dots,s_m,t,t'\in[0,T] \setminus \mathfrak{D}$, where $\mathfrak{D}$ is an at most countable subset of $(0,T)$ (see Billingsley~\cite[p. 138]{Billingsley}). Last, from the right continuity under~$P$, we get that \eqref{App2.8} holds for any $0<s_1<\dots <s_m<t<t'$, which shows that $P$ is a solution of the Martingale Problem, i.e. is such that for any $\varphi \in C^1_b(\R^d)$, $M_\varphi(t)$ defined by~\eqref{App2.7} is a Martingale. Besides, let us notice that for all $t\ge 0$, $\cL(X_t)=\theta_{0,t}(\rho)$ by using~\eqref{estim_unif_thetan}.

\medskip

The classical theory of martingale problems allows to obtain
now the Equation (\ref{Prob_rep}). Let us be more explicit.
Let us denote by $\mu ^{X}$ the random point measure associated to the jumps of $%
X_{t}$.
Then, Theorem 2.42 in \cite{[JS]} guarantees that, as a solution of the
martingale problem, $X$ is a semimartingale with
characteristics $B_{r}=b(X_{r},\theta_{0,r}(\rho))$ and $\nu$ defined by $\nu ((0,t)\times A)=\int_{0}^{t}\int_{{\mathbb{R}}^{d}}\int_{E\times \R_{+}}1_{A}(Q(v,z,X_{r},\theta_{0,r}(\rho)))\mu (dz)du\theta_{0,r}(\rho)(dv)dr$. Then, by
Theorem 2.34 in \cite{[JS]}  one has 
\begin{equation*}
X_{t}=X+\int_{0}^{t}b(X_{r},\theta_{0,r}(\rho))dr+\int_{0}^{t}y\mu^{X}(dr,dy)
\end{equation*}%
and the compensator of $\mu^{X}$ is $\nu$ (in \cite{[JS]} a truncation function $h$ appears, here we take
the truncation function $h(x)=0$, which  is possible because we work in the
framework of finite variation $\int \left\vert x\right\vert \theta_{0,r}(\rho)(dx)<\infty $). Then, using the representation given in
\cite[Theorem 7.4, p. 93]{[IW]}, one may construct a probability space and a
Poisson point measure $N_{\theta}$ of compensator $\mu (dz)du \theta_{0,r}(\rho)(dv)dr$ such
that the process
\begin{equation*}
\overline{X}_{t}=X_{0}+\int_{0}^{t}b(\overline{X}_{r},\theta_{0,r}(\rho))dr+\int_{0}^{t}\int_{{\mathbb{R}}^{d}}\int_{E\times \R_{+}}Q(v,z,\overline{X}_{r-},\theta_{0,r}(\rho)))N_{\theta}(dr,dz,du,dv)
\end{equation*}%
has the same law as $X$. Since $\cL(\overline{X}_{t})=\cL(X_t)=\theta_{0,t}(\rho)$ is continuous with respect to~$t$, this produces a solution of~\eqref{Prob_rep}.
\end{proof}

 Theorem~\ref{ExistenceRepr} gives the existence of a Boltzmann process~\eqref{Prob_rep} such that $\cL(X_t)=\theta_{0,t}$. It would be interesting to have a uniqueness result and get for example that the marginal laws of any process satisfying~\eqref{Prob_rep} are given by $\theta_{0,t}$, $t\ge 0$.   Unfortunately, this does not seem possible to prove such a result in our general framework, and we thus state it with a standard Lipschitz assumptions for~$Q$.

 \begin{proposition}
   Let us assume that $\mathbf{(A)}$ holds and that $Q$ satisfies the following Lipschitz assumption:
  \begin{align}\label{std_lip}
&\int_{E\times {\mathbb{R}}_{+}}\left\vert Q(v_{1},z,u,x_{1},\rho
_{1})-Q(v_{2},z,u,x_{2},\rho _{2})\right\vert \mu (dz)du  \leq L_\mu(c,\gamma) (\left\vert x_{1}-x_{2}\right\vert +\left\vert
v_{1}-v_{2}\right\vert +W_{1}(\rho _{1},\rho _{2})),
  \end{align}
i.e.~\eqref{h6d} is true with $Q_{v,x}=Q$.  Then, any process $(X_t, t\ge 0)$ that satisfies~\eqref{Prob_rep} is such that $\cL(X_t)=\theta_{0,t}(\rho)$ for all $t\ge 0$. 
\end{proposition}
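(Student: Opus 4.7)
The plan is to construct both the given process $X$ (with $f_t:=\cL(X_t)$) and the Boltzmann process $\tilde X$ of Theorem~\ref{ExistenceRepr} (with $g_t:=\cL(\tilde X_t)=\theta_{0,t}(\rho)$) on a common probability space via a Tanaka-type synchronous coupling, and then control $\delta_t := \E[|X_t-\tilde X_t|]$ by a Gronwall argument based on the standard Lipschitz assumption~\eqref{std_lip}.

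First I would observe that $t\mapsto f_t$ is $W_1$-continuous by~\eqref{lipsch_conti}, and pick a representation $\tau_r(w)=(\tau^1_r(w),\tau^2_r(w))$, jointly measurable in $(r,w)$, of an optimal $W_1$-coupling of $(f_r,g_r)$, via Lemma~\ref{lemSK} and Corollary~5.22 of~\cite{Villani}, so that $\int_0^1 |\tau^1_r(w)-\tau^2_r(w)|\,dw=W_1(f_r,g_r)$. On a common probability space carrying $X_0\sim \rho$ and a Poisson measure $M$ on $[0,1]\times E\times \R_+\times \R_+$ with intensity $dw\,\mu(dz)\,du\,dr$ independent of $X_0$, I would realize $X$ as the strong solution of the analogue of~\eqref{Prob_rep} whose jump term is $\int Q(\tau^1_r(w),z,u,X_{r-},f_{r-})M(dw,dz,du,dr)$, and realize $\tilde X$ similarly using $\tau^2_r$ and the coefficient law $g$. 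Treating $f$ and $g$ as prescribed deterministic flows, the two SDEs are standard Lipschitz jump SDEs with time-varying coefficients (via~\eqref{lipb} and~\eqref{std_lip}), so they admit unique strong solutions and, by uniqueness in law, they reproduce the correct marginal laws $f_t$ and $g_t$, exactly as in the construction used in the proof of Lemma~\ref{STABILITY}.

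The core computation is then to bound $\delta_t$. Taking the difference of the two SDEs under the coupling, using~\eqref{lipb} on the drift and~\eqref{std_lip} on the compensated jump part (against $dw\,\mu(dz)\,du\,dr$), together with $\int_0^1 |\tau^1_r(w)-\tau^2_r(w)|\,dw=W_1(f_r,g_r)\le \delta_r$, one obtains an inequality of the form $\delta_t \le C\int_0^t \delta_r\,dr$ with $C$ depending only on $L_b$ and $L_\mu(c,\gamma)$. Gronwall's lemma then forces $\delta_t\equiv 0$, which gives $f_t=g_t=\theta_{0,t}(\rho)$ for every $t\ge 0$. The main obstacle is that the Poisson measures driving~\eqref{Prob_rep} for $X$ and $\tilde X$ have distinct time-varying intensities $f_r(dv)\mu(dz)\,du\,dr$ and $g_r(dv)\mu(dz)\,du\,dr$, so they cannot simply be identified across the two equations. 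The Tanaka-type $\tau$-representation circumvents this, and it is exactly at this step that the standard Lipschitz hypothesis~\eqref{std_lip} is crucial: the weaker pseudo-Lipschitz~\eqref{h6d} would require switching from $Q$ to $Q_{v,x}$ and would introduce terms that cannot be absorbed into a clean Gronwall estimate.
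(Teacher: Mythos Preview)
Your argument is correct, but the route differs from the paper's. The paper does \emph{not} couple $X$ directly with the Boltzmann process $\tilde X$ of Theorem~\ref{ExistenceRepr}. Instead it stays within its Euler-scheme framework: on the same probability space as $X$ (and driven by the \emph{same} Poisson measure $N$), it defines the frozen Euler scheme $X^n_t$ with step $1/n$, and uses~\eqref{std_lip} and Gronwall to get $\E[|X_t-X^n_t|]\le C_T/n$, hence $W_1(\cL(X^n_t),f_t)\to 0$. Separately, it applies Lemma~\ref{STABILITY} step by step on the grid to obtain a recursive bound on $W_1(\cL(X^n_{t_k}),\Theta^n_{0,t_k}(\rho))$, and combines this with Theorem~\ref{flow} to conclude $W_1(\cL(X^n_t),\theta_{0,t}(\rho))\to 0$. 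Since both limits coincide, $f_t=\theta_{0,t}(\rho)$.

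Your continuous-time synchronous coupling is more direct and avoids the detour through the discrete scheme; it also makes the role of~\eqref{std_lip} completely transparent in a single Gronwall inequality. The price you pay is that you must invoke strong existence, pathwise uniqueness, and hence uniqueness in law for the \emph{continuous-time} jump SDE with prescribed flow $f$ (resp.\ $g$), so that the processes you build from $M$ via $\tau^1,\tau^2$ really have the marginals $f_t,g_t$; this is standard under~\eqref{std_lip}, but it goes beyond what Lemma~\ref{STABILITY} actually does (there the integrand is frozen at the initial value, so the ``SDE'' is just a Poisson integral and the law identity is immediate). The paper's approach, by contrast, uses only the one-step stability estimate already established and the convergence of $\Theta^n$ to $\theta$ from Theorem~\ref{flow}, keeping the argument entirely self-contained.
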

 \begin{proof}
   Let $X$ be a solution of~\eqref{Prob_rep}. We denote $f_t=\cL(X_t)$. There exists a Poisson point measure $N$ with intensity $f_r(dv)\mu(dz)dvdr$ such that
   $$X_t=X+\int_0^t b(X_r,f_r)dr +\int_0^tQ(v,z,u,X_{r-},f_r)N(dv,dz,du,dr).$$
   As for the preceding proof, we consider the time grid $t_{k}=\frac{k}{n}$, $k\in \N$ and  denote $\tau(t)=\frac{k}{n}$ for $\frac{k}{n}\leq t<\frac{k+1}{n}$. We define the process $X^n$ by:
   $$X^n_t=X+\int_0^t b(X^n_{\tau(r)},f_{\tau(r)})dr +\int_0^tQ(v,z,u,X_{\tau(r)},f_{\tau(r)})N(dv,dz,du,dr).$$
   We have 
   \begin{align*}
     |X_t-X^n_t|\le &  \int_0^t |b(X_r,f_r)-b(X^n_{\tau(r)},f_{\tau(r)})|dr \\&+\int_0^t |Q(v,z,u,X_{r-},f_r)-Q(v,z,u,X_{\tau(r)},f_{\tau(r)})|N(dv,dz,du,dr).
   \end{align*}
   By using~\eqref{lipb} and~\eqref{std_lip}, we get
   $$\E[|X_t-X^n_t|]\le \int_0^t (L_b+L_\mu(c,\gamma)) \left(\E[|X_r-X^n_{\tau(r)}|]+W_1(f_r,f_{\tau(r)})\right)dr.$$
   Now, we observe that $W_1(f_r,f_{\tau(r)})\le \frac{C_T}{n}$ for $r\in[0,T]$ by using~\eqref{lipsch_conti}. Similarly, we observe from the sublinear growth properties~\eqref{growthb} that for any $T>0$,
   $\E\left[ \sup_{n\ge1} \sup_{t\in [0,T]} |X^n_r| \right]<\infty$ and then
   \begin{equation}\label{lipW1_eul}  \exists C_T\in \R_+^*, \forall h\in [0,1], \E[\sup_{t\le T}|X^n_{t+h}-X^n_{t}|]\le C_Th.
   \end{equation}
   We therefore get for any $T>0$ the existence of a constant $C_T$ such that
   $\E[|X_t-X^n_t|]\le \int_0^t (L_b+L_\mu(c,\gamma)) \E[|X_r-X^n_{r}|]dr+ \frac{C_T}n$, and then
   \begin{equation}\label{cv_Euler} \E[|X_t-X^n_t|]\le  \frac{C_T}n \exp((L_b+L_\mu(c,\gamma))t), \ t\in [0,T],
   \end{equation}
   by Gronwall lemma. This gives $W_1(\cL(X^n_t),f_t)\underset{n\to \infty}\to 0$.

   On the other hand, we get by using Lemma~\ref{STABILITY} that
 \begin{align*}W_1(\cL(X^n_{t_{k+1}}),\Theta^n_{0,t_{k+1}}(\rho))\le  W_1(\cL(X^n_{t_{k}}),\Theta^n_{0,t_{k}}(\rho))&\left(1+2\frac{L_\mu(c,\gamma)+L_b}{n} \right)\\&+L_\mu(c,\gamma)\int_{t_k}^{t_{k+1}} W_1( f_t,\Theta^n_{0,t_{k}}(\rho))dt ,
 \end{align*}
   where $\Theta^n_{0,t}$ is defined by~\eqref{def_theta_n}. For $t_{k+1}\le T$ and $t\in[t_k,t_{k+1}]$, we have $W_1( f_t,\Theta^n_{0,t_{k}}(\rho))\le W_1( f_t,f_{t_k})+W_1( f_{t_k},\Theta^n_{0,t_{k}}(\rho))\le \frac{C_T}{n}$ for some constant $C_T$ by using~\eqref{lipsch_conti} and~\eqref{cv_Euler}. Therefore, we get for $t_{k+1}\le T$ that
   $$W_1(\cL(X^n_{t_{k+1}}),\Theta^n_{0,t_{k+1}}(\rho))\le  W_1(\cL(X^n_{t_{k}}),\Theta^n_{0,t_{k}}(\rho))\left(1+2\frac{L_\mu(c,\gamma)+L_b}{n} \right)+\frac{C_T}{n^2},$$
   for some constant $C_T>0$. Since $\cL(X^n_0)=\Theta^n_{0,0}(\rho)=\rho$, we get for  $t_k \in [0,T]$:
   $$  W_1(\cL(X^n_{t_{k}}),\Theta^n_{0,t_{k}}(\rho)) \le \frac{C_T}{n^2} \frac{\left(1+2\frac{L_\mu(c,\gamma)+L_b}{n} \right)^k -1}{2\frac{L_\mu(c,\gamma)+L_b}{n}} \le  \frac{C_T}{2(L_\mu(c,\gamma)+L_b)n} \exp( 2(L_\mu(c,\gamma)+L_b)T).$$
   Since $T>0$ is arbitrary, we obtain by using this bound together with~\eqref{lipW1_eul} and Theorem~\ref{flow} that $W_1(\cL(X^n_{t}),\theta_{0,t}(\rho))\underset{n\to \infty} \to  0$ for any~$t\ge 0$. This shows that $f_t=\theta_{0,t}(\rho)$ since we already have proven that $W_1(\cL(X^n_{t}),f_t)\underset{n\to \infty} \to  0$.
 \end{proof}

\section{Particle system approximation} \label{particles}

Particle systems have been used for a long time to show existence results on nonlinear SDE of McKean-Vlasov and Boltzmann type, see e.g. Sznitman~\cite{Sznitman} or M\'el\'eard~\cite{Meleard}. Formally, the interacting particle system associated to the equation~\eqref{Prob_rep} can be written as follows:
$$X^i_t=X^i_0+\int_0^tb\left(X^i_r,\frac 1N \sum_{j=1}^N\delta_{X^j_{r-}}\right)dr+\int_0^t\int_{\R^d\times E \times \R_+}Q\left(v,z,u,X^i_{r-},\frac 1N \sum_{j=1}^N\delta_{X^j_{r-}}\right)N^i(dv,dz,du,dr),$$
where $N^i(dv,dz,du,dr)$, $i=1,\dots,N$, are independent Poisson point measure with intensity $\left(\frac 1N \sum_{j=1}^N\delta_{X^j_{r-}}(dv)\right)\mu(dz)dudr$. In this section, we do not discuss this interacting particle system itself, but we focus on its discretization.

More precisely, we are interested in the approximation of operator $\Theta_{s,t}$ defined by~\eqref{W3'} and of the corresponding Euler scheme. Particle systems gives then a tool to approximate~$\Theta_{s,t}$ and thus the flow $\theta_{s,t}$ defined by Theorem~\ref{flow}.  Through this section,  
we will work with the space $\mathcal{P}(\mathcal{P}_{1}({\mathbb{R}}^{d}))$ of the probability measures on $\mathcal{P}_{1}({%
\mathbb{R}}^{d})$\ (with the Borel $\sigma $ field associated to the
distance $W_{1}$). We denote by $\mathcal{P}_{1}(\mathcal{P}_{1}({\mathbb{R}}%
^{d}))$\ the space of probability measures $\eta \in \mathcal{P}(\mathcal{P}%
_{1}({\mathbb{R}}^{d}))$\ such that 
\begin{equation*}
\int_{\mathcal{P}_{1}({\mathbb{R}}^{d})}W_{1}(\mu ,\delta _{0})\eta (d\mu
)<\infty .
\end{equation*}%
On $\mathcal{P}_{1}(\mathcal{P}_{1}({\mathbb{R}}^{d}))$, we take the
Wasserstein distance 
\begin{eqnarray*}
\mathcal{W}_{1}(\eta _{1},\eta _{2}) &=&\inf_{\pi \in \Pi (\eta _{1},\eta
_{2})}\int_{\mathcal{P}_{1}({\mathbb{R}}^{d})\times \mathcal{P}_{1}({\mathbb{%
R}}^{d})}W_{1}(\mu ,\nu )\pi (d\mu ,d\nu ) \\
&=&\sup_{L(\Phi )\leq 1}\left\vert \int_{\mathcal{P}_{1}({\mathbb{R}}%
^{d})}\Phi (\mu )\eta _{1}(d\mu )-\int_{\mathcal{P}_{1}({\mathbb{R}}%
^{d})}\Phi (\mu )\eta _{2}(d\mu )\right\vert
\end{eqnarray*}%
where $\Pi (\eta _{1},\eta _{2})$ is the set of probability measures on $%
\mathcal{P}_{1}({\mathbb{R}}^{d})\times \mathcal{P}_{1}({\mathbb{R}}^{d})$
with marginals $\eta _{1}$ and $\eta _{2}$ and $L(\Phi )$ is the Lipschitz
constant of $\Phi ,$ so that $\left\vert \Phi (\mu )-\Phi (\nu )\right\vert
\leq L(\Phi )W_{1}(\mu ,\nu ).$ Before going on, we list some basic
properties of $\mathcal{W}_{1}$ which will be used in the following. First
we notice that $\Pi (\eta ,\delta _{\mu })=\{\eta \otimes \delta _{\mu }\}$
(the product probability of $\eta $ and $\delta _{\mu }$\ is the only
probability measure on the product space which has the marginals $\eta $ and 
$\delta _{\mu }).$ As an immediate consequence, we have
$$W_1(\eta,\delta_\mu)=\int_{\mathcal{P}_{1}({\mathbb{R}}^{d})}W_{1}(\nu ,\mu)\eta (d\nu )$$
and $\mathcal{W}_{1}(\delta _{\mu
},\delta _{\nu })=W_{1}(\mu ,\nu ).$ Another fact, used in the following, is
that for every $\eta \in \mathcal{P}_{1}(\mathcal{P}_{1}({\mathbb{R}}^{d}))$%
\ and $\mu _{0}\in \mathcal{P}_{1}({\mathbb{R}}^{d})$\ 
\begin{eqnarray}
\int_{\mathcal{P}_{1}({\mathbb{R}}^{d})}\left\vert \int_{{\mathbb{R}}%
^{d}}f(x)\nu (dx)-\int_{{\mathbb{R}}^{d}}f(x)\mu _{0}(dx)\right\vert \eta
(d\nu ) &\leq &L(f)\int_{\mathcal{P}_{1}({\mathbb{R}}^{d})}W_{1}(\nu ,\mu
_{0})\eta (d\nu )  \label{chain1b} \\
&=&L(f)\mathcal{W}_{1}(\eta ,\delta _{\mu _{0}}).  \notag
\end{eqnarray}

The main object in this section is a random vector $%
X=(X^{1},....,X^{N}),X^{i}\in {\mathbb{R}}^{d}$, $i=1,...,N,$ where the
dimension $N$ is given (fixed). We assume that $\E(\left\vert
X^{i}\right\vert )<\infty $ and we associate the (random) empirical measure
on ${\mathbb{R}}^{d}$ 
\begin{equation*}
\widehat{\rho }(X)(dv)=\frac{1}{N}\sum_{i=1}^{N}\delta _{X^{i}}(dv).
\end{equation*}%
Notice that $\widehat{\rho }(X)$ is a random variable with values in $%
\mathcal{P}_{1}({\mathbb{R}}^{d})$ so the law $\mathcal{L}(\widehat{\rho }%
(X))\in \mathcal{P}(\mathcal{P}_{1}({\mathbb{R}}^{d}))$ and, for every $\Phi
:\mathcal{P}_{1}({\mathbb{R}}^{d})\rightarrow {\mathbb{R}}_{+}$%
\begin{equation*}
\int_{\mathcal{P}_{1}({\mathbb{R}}^{d})}\Phi (\mu ) \mathcal{L}(\widehat{%
\rho }(X))(d\mu )={\mathbb{E}}(\Phi (\widehat{\rho }(X))).
\end{equation*}%
In particular, we have%
\begin{eqnarray}
\mathcal{W}_{1}(\mathcal{L}(\widehat{\rho }(X)),\mathcal{L}(\widehat{\rho }%
(Y))) &=&\sup_{L(\Phi )\leq 1}\left\vert {\mathbb{E}}(\Phi (\widehat{\rho }%
(X)))-{\mathbb{E}}(\Phi (\widehat{\rho }(Y)))\right\vert  \label{chain1a} \\
&\leq &{\mathbb{E}}(W_{1}(\widehat{\rho }(X),\widehat{\rho }(Y)))  \notag \\
&\leq &\frac{1}{N}\sum_{i=1}^{N}{\mathbb{E}}(\left\vert
X^{i}-Y^{i}\right\vert ).  \notag
\end{eqnarray}%
This also proves by taking $Y^{i}=0$ that $\mathcal{L}(\widehat{\rho }%
(X))\in \mathcal{P}_{1}(\mathcal{P}_{1}({\mathbb{R}}^{d})).$

In the following we consider an initial vector $X_{0}$ and will assume that
the components $X_{0}^{1},...,X_{0}^{N}$ are identically distributed and we
denote $\rho \in \mathcal{P}_{1}({\mathbb{R}}^{d})$ the common law: $%
X_{0}^{i}\sim \rho ,i=1,...,N$. We consider the uniform grid $%
s=s_{0}<....<s_{n}=t,s_{i}=s+\frac{i}{n}(t-s)$ and we construct two
sequences $X_{k}$ and $\overline{X}_{k},k=0,1,...,n$ in the following way.
We start with $\overline{X}_{0}=X_0$. The sequence $X_{k}$ (respectively $\overline{X}_{k}$) is
constructed by using the empirical measures $\widehat{\rho }(X_{k})$
(respectively the measure $\Theta_{s,s_k}^n(\rho):=\Theta_{s_{k-1},s_k}\dots \Theta_{s,s_1}(\rho)$ with $\Theta_{s,t}(\rho)$ defined by~\eqref{W3'}), and we define by recurrence:
\begin{align}
  X_{k+1}^{i} =&X_{k}^{i}+b(X_{k}^{i},\widehat{\rho }(X_{k}))(s_{k+1}-s_{k})  \label{chain5} \\
  &+%
\int_{s_{k}}^{s_{k+1}}\int_{{\mathbb{R}}^{d}\times E\times {\mathbb{R}}%
_{+}}Q(v,z,u,X_{k}^{i},\widehat{\rho }(X_{k}))N_{\widehat{\rho }%
(X_{k})}^{i}(dv,dz,du,dr), \notag\\
\overline{X}_{k+1}^{i} =&\overline{X}_{k}^{i}+b(\overline{X}_{k}^{i},\Theta_{s,s_k}^n(\rho))(s_{k+1}-s_{k})\label{chain6}\\&+\int_{s_{k}}^{s_{k+1}}\int_{{\mathbb{R}}%
^{d}\times E\times {\mathbb{R}}_{+}}Q(v,z,u,\overline{X}_{k}^{i},\Theta_{s,s_k}^n(\rho))N_{\Theta_{s,s_k}^n(\rho)}^{i}(dv,dz,du,dr),\notag
\end{align}%
where $N_{\widehat{\rho }(X_{k})}^{i}(dv,dz,du,dr)$, $i=1,\dots ,N$ (resp. $%
N_{\Theta_{s,s_k}^n(\rho)}^{i}(dv,dz,du,dr)$) are Poisson point measures
that are independent each other conditionally to $X_{k}$ (resp. $\overline{X}_{k}$) with intensity $\widehat{\rho }(X_{k})(dv)\mu (dz)dudr$ (resp. $\Theta_{s,s_k}^n(\rho)(dv)\mu (dz)dudr$). Let us observe that the common law of $\overline{X}_{k}^{i}$, $i=1,...,N$, is 
\begin{equation}
\cL (\overline{X}^i_{k})=\Theta _{s,s_{k}}^{n}(\rho ).  \label{chain7}
\end{equation}%
Note that $\overline{X}^1_{k},\dots,\overline{X}^N_{k}$ are independent. 

\begin{theorem}
\label{theorem_approx} Assume that $(\mathbf{A})$ holds true and $%
X_{0}^{i},i=1,...,N$ are independent and of law $\rho\in \mathcal{P}_1({%
\mathbb{R}}^d)$. We assume that $M_q=\left(\int_{{\mathbb{R}}^d} |x|^q\rho(dx)\right)^{1/q}<\infty$ with $q>\frac{d}{d-1}\wedge 2$. We define 
\begin{equation*}
V_N=1_{d=1} N^{-1/2}+1_{d=2} N^{-1/2}\log(1+N)+1_{d\ge 3}N^{-1/d}.
\end{equation*}
Then there exists a constant~$C$ depending on $d$, $q$, $(b,c,\gamma)$ and $%
(t-s)$ such that for every Lipschitz function $f:{\mathbb{R}}^d\to {\mathbb{R%
}}$ with $L(f)\leq 1$%
\begin{equation}
{\mathbb{E}}\left(\left\vert \frac{1}{N}\sum_{i=1}^{N}f(X_{n}^{i})-\int_{{%
\mathbb{R}}^{d}}f(x)\Theta _{s,s_{n}}^{n}(\rho )(dx)\right\vert \right)\leq
C M V_N.  \label{chain8}
\end{equation}%
Besides, we have the propagation of chaos in Wasserstein distance:
$$W_1(\cL(X_n^1,\dots,X_n^m),\Theta _{s,s_{n}}^{n}(\rho )(dx)\otimes\dots\otimes \Theta _{s,s_{n}}^{n}(\rho )(dx))\le m C M V_N \underset{N\to \infty}{\to}0.$$
Furthermore, if $\theta_{s,t}$ denotes the flow given by Theorem~\ref{flow}, we
have 
\begin{equation}
{\mathbb{E}}\left(\left\vert \frac{1}{N}\sum_{i=1}^{N}f(X_{n}^{i})-\int_{{%
\mathbb{R}}^{d}}f(x)\theta _{s,t}(\rho )(dx)\right\vert \right)\leq CMV_N +%
\frac{C}{n}.  \label{chain8'}
\end{equation}
\end{theorem}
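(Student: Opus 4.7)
The plan is to compare the particle system $(X_k^i)$ with the auxiliary system $(\overline{X}_k^i)$ of independent copies of the one-step Euler scheme (which are $i.i.d.$ with law $\Theta_{s,s_k}^n(\rho)$), and then to combine a one-step stability estimate with the Fournier--Guillin rate for empirical measures of $i.i.d.$ samples. Concretely, I would first build the two systems on the same probability space by coupling: starting from the same $X_0=\overline{X}_0$, I would construct coupled Poisson point measures $N^i_{\widehat\rho(X_k)}$ and $N^i_{\Theta^n_{s,s_k}(\rho)}$ from a single Poisson measure on $[0,1]\times E\times\R_+\times\R_+$ using the representation maps $\tau$ from Lemma~\ref{lemSK}, exactly as in the proof of Lemma~\ref{STABILITY}. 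Applying the stability estimate~\eqref{NEW2''} conditionally on $(X_k,\overline{X}_k)$ on the interval $[s_k,s_{k+1}]$ yields
\begin{equation*}
\E[|X^i_{k+1}-\overline{X}^i_{k+1}|]\le \E[|X^i_k-\overline{X}^i_k|]\bigl(1+(L_b+L_\mu)(s_{k+1}-s_k)\bigr)+(L_b+2L_\mu)(s_{k+1}-s_k)\,\E[W_1(\widehat\rho(X_k),\Theta^n_{s,s_k}(\rho))].
\end{equation*}

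The second key step is to split
\[
\E[W_1(\widehat\rho(X_k),\Theta^n_{s,s_k}(\rho))]\le \tfrac1N\sum_{i=1}^N\E[|X^i_k-\overline{X}^i_k|]+\E[W_1(\widehat\rho(\overline{X}_k),\Theta^n_{s,s_k}(\rho))].
\]
Since $\overline{X}^1_k,\dots,\overline{X}^N_k$ are $i.i.d.$ with law $\Theta_{s,s_k}^n(\rho)$, the last term is exactly the $W_1$-error of an empirical measure of $N$ $i.i.d.$ samples, and Fournier--Guillin~\cite{FG} gives $\E[W_1(\widehat\rho(\overline{X}_k),\Theta^n_{s,s_k}(\rho))]\le C M_q V_N$ provided the $q$-th moment of $\Theta^n_{s,s_k}(\rho)$ is bounded uniformly in $n,k$. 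The latter uniform moment bound is obtained by iterating the one-step moment estimate on $\Theta_{s,s_{k+1}}$ exactly as in~\eqref{moment_theta} (and similarly for the $q$-th moment, using $|X_{s,t}-X|^q$ together with the BDG/Kunita inequality for compensated Poisson integrals and the sublinear growth~\eqref{h6a}). By a discrete Gronwall argument on $u_k:=\tfrac1N\sum_i\E[|X^i_k-\overline{X}^i_k|]$, we then obtain
\[
\max_{0\le k\le n}u_k\le C(t-s)\,M_q V_N\,\exp\!\bigl(C(t-s)\bigr),
\]
which in particular bounds $\E[|X^i_n-\overline{X}^i_n|]$ by $C M_q V_N$.

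The three conclusions then follow by standard triangle inequalities. For~\eqref{chain8}, write
\[
\E\Bigl|\tfrac1N\sum_i f(X^i_n)-\!\!\int f\,d\Theta^n_{s,s_n}(\rho)\Bigr|\le L(f)\,u_n+\E\Bigl|\tfrac1N\sum_i f(\overline{X}^i_n)-\!\!\int f\,d\Theta^n_{s,s_n}(\rho)\Bigr|,
\]
the second term being controlled by $W_1(\widehat\rho(\overline{X}_n),\Theta^n_{s,s_n}(\rho))\le CM_qV_N$ again by Fournier--Guillin. For the propagation of chaos, the coupling above furnishes a coupling between $\cL(X^1_n,\dots,X^m_n)$ and $\Theta^n_{s,s_n}(\rho)^{\otimes m}$ with cost $\sum_{i=1}^m\E[|X^i_n-\overline{X}^i_n|]\le mCM_qV_N$. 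Finally, \eqref{chain8'} follows from~\eqref{chain8} and the bound $d_\ast(\theta_{s,t},\Theta^n_{s,s_n})\le C(t-s)/n$ from~\eqref{W7} in Theorem~\ref{flow}, since a $1$-Lipschitz test function $f$ then gives $\bigl|\int f\,d\Theta^n_{s,s_n}(\rho)-\int f\,d\theta_{s,t}(\rho)\bigr|\le W_1(\Theta^n_{s,s_n}(\rho),\theta_{s,t}(\rho))\le C(1+\int|x|\rho(dx))/n$.

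The main obstacle is the propagation of the moment control: the Gronwall step combines the $W_1$-error on the empirical measure (which is $O(V_N)$) with the discrete dynamics, and this only closes if the uniform $L^q$-moment bound on $\Theta_{s,s_k}^n(\rho)$ holds independently of $n$ and $k$, which requires extending~\eqref{moment_theta} from first to $q$-th moments. The other delicate point is to make sure the coupling argument is truly simultaneous across all $N$ particles (so that the empirical $W_1$-distance can be bounded by the mean of the $|X^i_k-\overline{X}^i_k|$), which is why one needs the joint coupling of the $N$ Poisson measures $N^i_{\widehat\rho(X_k)}$ and $N^i_{\Theta^n_{s,s_k}(\rho)}$ via the Skorokhod-type representation of Lemma~\ref{lemSK} applied to the optimal transport plan between $\widehat\rho(X_k)$ and $\Theta^n_{s,s_k}(\rho)$ (conditionally on $X_k$), rather than trying to couple $\widehat\rho(X_k)$ with $\widehat\rho(\overline{X}_k)$ directly.
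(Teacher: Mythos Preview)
Your argument is correct but takes a genuinely different route from the paper. The paper uses a \emph{Lindeberg telescoping}: it introduces hybrid sequences $X_{\kappa,\cdot}$ that follow the independent scheme $\overline{X}$ up to step $\kappa$ and the empirical scheme afterwards, bounds each consecutive difference $\mathcal{W}_1(\mathcal{L}(\widehat\rho(X_{\kappa,n})),\mathcal{L}(\widehat\rho(X_{\kappa+1,n})))$ via an auxiliary lemma (Lemma~\ref{lem_Xtilde}) that controls the effect of a single ``wrong'' first step, and then sums over $\kappa$. Fournier--Guillin is invoked once per $\kappa$ on the truly i.i.d.\ vector $\overline{X}_\kappa$. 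Your approach is the classical \emph{synchronous (Sznitman) coupling}: you build $X$ and $\overline{X}$ on one space, derive a one-step recursion for $u_k=\tfrac1N\sum_i\E|X^i_k-\overline{X}^i_k|$, split $W_1(\widehat\rho(X_k),\Theta^n_{s,s_k}(\rho))$ by a triangle inequality through $\widehat\rho(\overline{X}_k)$, and close with discrete Gronwall. Both approaches rely on the same two ingredients (the coupling/stability computation behind Lemma~\ref{STABILITY} and Fournier--Guillin applied to i.i.d.\ samples of $\Theta^n_{s,s_k}(\rho)$), and both need the uniform $q$-th moment bound you flag, which the paper also uses but states only in passing. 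Your route is arguably more direct and more standard in propagation-of-chaos arguments; the paper's Lindeberg decomposition has the mild advantage that the i.i.d.\ structure of $\overline{X}_\kappa$ is immediate, whereas in your construction one must check that the $\overline{X}^i_k$ remain independent despite the coupling map $\tau_k^2$ depending on $\widehat\rho(X_k)$---this holds because the conditional law of the pushed-forward Poisson measure given $\mathcal{F}_k$ depends only on the marginal $\Theta^n_{s,s_k}(\rho)$, not on the particular representative $\tau_k^2$, but you should make this explicit. The propagation-of-chaos and flow estimates~\eqref{chain8'} then follow identically in both approaches.
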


To prove Theorem~\ref{theorem_approx}, we introduce an intermediary sequence 
$\widetilde{X}_{k}^{i}$ defined as follows. On the first time step, we
define 
\begin{equation}
\widetilde{X}_{1}^{i}=X_{0}^{i}+b(X_{0}^{i},\rho
)(s_{1}-s_{0})+\int_{s_{0}}^{s_{1}}\int_{{\mathbb{R}}^{d}\times E\times {%
\mathbb{R}}_{+}}Q(v,z,u,X_{0}^{i},\rho )N_{\rho }^{i}(dv,dz,du,dr),
\label{chain2'}
\end{equation}%
so that $\widetilde{X}_{1}^{i}=\overline{X}_{1}^{i}$. Then, for the next
time steps, we define for $k\geq 1$, 
\begin{equation*}
\widetilde{X}_{k+1}^{i}=\widetilde{X}_{k}^{i}+b(\widetilde{X}_{k}^{i},%
\widehat{\rho }(\widetilde{X}_{k}))(s_{k+1}-s_{k})+\int_{s_{k}}^{s_{k+1}}%
\int_{{\mathbb{R}}^{d}\times E\times {\mathbb{R}}_{+}}Q(v,z,u,\widetilde{X}%
_{k}^{i},\widehat{\rho }(\widetilde{X}_{k}))N_{\widehat{\rho }(\widetilde{X}%
_{k})}^{i}(dv,dz,du,dr),\quad \quad
\end{equation*}%
where $N_{\widehat{\rho }(\widetilde{X}_{k})}^{i}(dv,dz,du,dr)$ is a Poisson
process with intensity $\widehat{\rho }(\widetilde{X}_{k})(dv)\mu (dz)dudr$.
We stress that for $k\geq 1,$ the intensity of the Poisson point measures
is, as for $X_{k+1}^{i}$, the empirical measure of the vector constructed in
the previous step. However, since $X_{1}\neq \widetilde{X}_{1},$ the two
chains are different.

\begin{lemma}
\label{lem_Xtilde} Let assumption ($\mathbf{A}$) hold. We assume that the
components of $X_{0}=(X_{0}^{1},...,X_{0}^{N})$ have the common distribution 
$\rho \in \mathcal{P}_{1}({\mathbb{R}}^{d})$. Then, we have 
\begin{align*}
&\mathcal{W}_{1}(\mathcal{L}(\widehat{\rho }(X_{n})),\mathcal{L}(\widehat{%
\rho }(\widetilde{X}_{n})))\leq \frac{e^{2L(t-s)}L(t-s)}{n}\mathcal{W}_{1}(\mathcal{L}(\widehat{  \rho }(X_{0})),\delta _{\rho }),\\
&W_1\left(\cL(X^1_n,\dots,X^m_n), \cL(\widetilde{X}^1_n,\dots,\widetilde{X}^m_n)\right)\leq m\frac{e^{2L(t-s)}L(t-s)}{n}
  \mathcal{W}_{1}(\mathcal{L}(\widehat{\rho }(X_{0})),\delta _{\rho }),
\end{align*}
with $L=L_{b}+2L(c,\gamma )$.
\end{lemma}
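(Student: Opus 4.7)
The plan is to couple the two particle systems $(X_k)_{k=0}^n$ and $(\widetilde{X}_k)_{k=0}^n$ on a common probability space and control, step by step, the quantity $D_k:=\E[|X_k^1-\widetilde{X}_k^1|]$. By the exchangeability of the particles under both dynamics, $D_k=\frac{1}{N}\sum_{i=1}^N\E[|X_k^i-\widetilde{X}_k^i|]$, so an upper bound on $D_n$ will immediately translate, through~\eqref{chain1a}, into an upper bound on $\mathcal{W}_1(\cL(\widehat{\rho}(X_n)),\cL(\widehat{\rho}(\widetilde{X}_n)))$; and via $|x-y|\le \sum_{i=1}^m|x^i-y^i|$ on $(\R^d)^m$ it also yields the bound $mD_n$ on the $m$-marginal Wasserstein distance.

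For the coupling, I would take $X_0=\widetilde{X}_0$ and, mimicking the construction in the proof of Lemma~\ref{STABILITY}, introduce for each $i$ a common ``universal'' Poisson point measure $N^i(dw,dz,du,dr)$ on $[0,1]\times E\times\R_+\times\R_+$ with intensity $dw\,\mu(dz)\,du\,dr$, independent of $X_0$ and across $i$. At each step $k\ge 0$, I choose a jointly measurable optimal $W_1$-coupling $\tau_k^i(w)=(\tau_{k,1}^i(w),\tau_{k,2}^i(w))$ between the two intensity measures used at step $k$, namely $\widehat{\rho}(X_0)$ and $\rho$ when $k=0$, or $\widehat{\rho}(X_k)$ and $\widehat{\rho}(\widetilde{X}_k)$ when $k\ge 1$; $X_{k+1}^i$ and $\widetilde{X}_{k+1}^i$ are then driven by $N^i$ through the channels $(\tau_{k,1}^i,Q)$ and $(\tau_{k,2}^i,Q_{v,x})$ respectively, exactly as in the displayed computation of Lemma~\ref{STABILITY} (the transformation $Q_{v,x}$ is needed to compensate the pseudo-Lipschitz defect via~\eqref{h6e}--\eqref{h6d}).

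With this coupling in place, conditioning on $(X_k,\widetilde{X}_k)$ and applying the estimate of Lemma~\ref{STABILITY} on the interval $[s_k,s_{k+1}]$ of length $h:=(t-s)/n$ gives, for $k\ge 1$ and any $i$,
\begin{equation*}
\E\bigl[|X_{k+1}^i-\widetilde{X}_{k+1}^i|\,\big|\,X_k,\widetilde{X}_k\bigr]\le |X_k^i-\widetilde{X}_k^i|+(L_b+L_\mu)h\bigl(|X_k^i-\widetilde{X}_k^i|+W_1(\widehat{\rho}(X_k),\widehat{\rho}(\widetilde{X}_k))\bigr)+L_\mu h\,W_1(\widehat{\rho}(X_k),\widehat{\rho}(\widetilde{X}_k)),
\end{equation*}
where $L_\mu=L_\mu(c,\gamma)$, while for the initial step the common initial value eliminates the particle-gap contribution and yields $\E[|X_1^i-\widetilde{X}_1^i|\mid X_0]\le Lh\,W_1(\widehat{\rho}(X_0),\rho)$ with $L=L_b+2L_\mu$. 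Averaging over $i$, taking full expectation, using $W_1(\widehat{\rho}(X_k),\widehat{\rho}(\widetilde{X}_k))\le \frac{1}{N}\sum_i |X_k^i-\widetilde{X}_k^i|$ and exchangeability gives $D_1\le Lh\,\mathcal{W}_1(\cL(\widehat{\rho}(X_0)),\delta_\rho)$ and the recursion $D_{k+1}\le(1+2Lh)D_k$ for $k\ge 1$; iterating and using $(1+2Lh)^{n-1}\le e^{2L(t-s)}$ yields $D_n\le\frac{e^{2L(t-s)}L(t-s)}{n}\,\mathcal{W}_1(\cL(\widehat{\rho}(X_0)),\delta_\rho)$, which gives both inequalities of the lemma.

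The main technical nuisance is the measurable selection of the couplings $\tau_k^i$, since the source measures $\widehat{\rho}(X_k),\widehat{\rho}(\widetilde{X}_k)$ are themselves random; this is handled via Corollary~5.22 of~\cite{Villani} exactly as in the proof of Lemma~\ref{STABILITY}. Beyond that, the proof is a clean induction, and the exponent $2L$ (as opposed to a slightly sharper constant) simply reflects a convenient lumping of the factors $(L_b+L_\mu)$ arising from the initial-condition and drift/$Q$-measure terms together with the $L_\mu$ from the intensity-measure term.
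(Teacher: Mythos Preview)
Your proposal is correct and follows essentially the same approach as the paper: construct coupled versions of the two particle systems via common Poisson measures on $[0,1]\times E\times\R_+$ together with optimal-coupling representations $\tau_k$ of the relevant intensity measures, derive the one-step estimate from \eqref{lipb} and \eqref{h6d}, average over $i$ to get the recursion $D_{k+1}\le(1+2Lh)D_k$ with the special first step $D_1\le Lh\,\mathcal{W}_1(\cL(\widehat\rho(X_0)),\delta_\rho)$, and conclude via exchangeability and \eqref{chain1a}. The only cosmetic difference is that the paper builds the coupled sequences $(x_k,\widetilde x_k)$ explicitly rather than invoking Lemma~\ref{STABILITY} conditionally, and notes the joint exchangeability of $(x_n^i,\widetilde x_n^i)_i$ explicitly to justify $\E|x_n^i-\widetilde x_n^i|=\frac{1}{N}\sum_j\E|x_n^j-\widetilde x_n^j|$.
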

\begin{proof}
\textbf{Step 1} We first construct by recurrence the sequences $x_{k},%
\widetilde{x}_{k},k=1,...,n$ in the following way. We take $\pi _{0}(dv,d%
\overline{v})$ to be the optimal coupling of $\rho $ and of $\widehat{\rho }%
(X_{0})$ and we take $\tau _{0}:[0,1]\rightarrow {\mathbb{R}}^{d}\times {%
\mathbb{R}}^{d}$ that represents $\pi _{0}$. In particular, if $\tau
_{0}=(\tau _{0}^{1},\tau _{0}^{2})$ then $\tau _{0}^{1}$ represents $\widehat{\rho }(X_{0})$ and $\tau _{0}^{2}$ represents $\rho$. We
note that the optimality of $\pi _{0}$ gives $W_{1}(\rho ,\widehat{\rho }(X_{0}))=\int_{0}^{1}|\tau _{0}^{2}(u)-\tau _{0}^{1}(u)|du$ and thus 
\begin{equation}
\mathcal{W}_{1}(\delta _{\rho},\mathcal{L}(\widehat{\rho }(X_{0})))={%
\mathbb{E}}[W_{1}(\rho ,\widehat{\rho }(X_{0}))]={\mathbb{E}}\left[
\int_{0}^{1}|\tau _{0}^{2}(u)-\tau _{0}^{1}(u)|du\right] .  \label{calW1}
\end{equation}%
Then we define 
\begin{eqnarray*}
x_{1}^{i} &=&X_{0}^{i}+b(X_{0}^{i},\widehat{\rho }(X_{0}))(s_{1}-s_{0})+%
\int_{s_{0}}^{s_{1}}\int_{[0,1]\times E\times {\mathbb{R}}_{+}}Q(\tau
_{0}^{1}(w),z,u,X_{0}^{i},\widehat{\rho }(X_{0}))N^{i}(dw,dz,du,dr), \\
\widetilde{x}_{1}^{i} &=&X_{0}^{i}+b(X_{0}^{i},\rho
)(s_{1}-s_{0})+\int_{s_{0}}^{s_{1}}\int_{[0,1]\times E\times {\mathbb{R}}%
_{+}}Q_{\tau _{0}^{1}(w),X_{0}^{i}}(\tau _{0}^{2}(w),z,u,X_{0}^{i},\rho
)N^{i}(dw,dz,du,dr).
\end{eqnarray*}%
where $N^{i}$ is a Poisson process with intensity $1_{[0,1]}(w)dw\mu
(dz)dudr.$ We also assume that the Poisson point measures $N^{i},i=1,...,N$
are independent. Notice that $x_{1}$ has the same law as $X_{1}$ and $%
\widetilde{x}_{1}$ has the same law as $\widetilde{X}_{1}$ by (\ref{h6e}).

Then, for $k\geq 1,$ if $x_{k},\widetilde{x}_{k}$ are given, we construct $%
x_{k+1},\widetilde{x}_{k+1}$ as follows. We consider $\pi _{k}(dv,d\overline{%
v})$ an optimal coupling of $\widehat{\rho }(x_{k})$ and $\widehat{\rho }(%
\widetilde{x}_{k})$ and we take $\tau _{k}:[0,1]\rightarrow {\mathbb{R}}%
^{d}\times {\mathbb{R}}^{d}$ that represents $\pi _{k}$. 
Then we define 
\begin{eqnarray*}
x_{k+1}^{i} &=&x_{k}^{i}+b(x_{k}^{i},\widehat{\rho }(x_{k}))(s_{k+1}-s_{k})+%
\int_{s_{k}}^{s_{k+1}}\int_{[0,1]\times E\times {\mathbb{R}}_{+}}Q(\tau
_{k}^{1}(w),z,u,x_{k}^{i},\widehat{\rho }(x_{k}))N^{i}(dw,dz,du,dr), \\
\widetilde{x}_{k+1}^{i} &=&\widetilde{x}_{k}^{i}+b(\widetilde{x}_{k}^{i},%
\widehat{\rho }(\widetilde{x}_{k}))(s_{k+1}-s_{k})+\int_{s_{k}}^{s_{k+1}}%
\int_{[0,1]\times E\times {\mathbb{R}}_{+}}Q_{\tau
_{k}^{1}(w),x_{k}^{i}}(\tau _{k}^{2}(w),z,u,\widetilde{x}_{k}^{i},\widehat{%
\rho }(\widetilde{x}_{k}))N^{i}(dw,dz,du,dr)
\end{eqnarray*}%
Using again (\ref{h6e}), we get by induction on~$k$ that $x_{k}$ has the
same law as $X_{k}$ and $\widetilde{x}_{k}$ has the same law as $\widetilde{X%
}_{k}$. 

\textbf{Step 2.} Suppose that $k\geq 1.$ We use now Assumptions~\eqref{lipb}
and~\eqref{h6d} to get%
\begin{eqnarray*}
{\mathbb{E}}\left\vert x_{k+1}^{i}-\widetilde{x}_{k+1}^{i}\right\vert &\leq &%
{\mathbb{E}}\left\vert x_{k}^{i}-\widetilde{x}_{k}^{i}\right\vert
+(L_{b}+L_{\mu }(c,\gamma ))({\mathbb{E}}\left\vert x_{k}^{i}-\widetilde{x}%
_{k}^{i}\right\vert +W_{1}(\widehat{\rho }(x_{k}),\widehat{\rho }(\widetilde{%
x}_{k})))(s_{k+1}-s_{k}) \\
&&+L_{\mu }(c,\gamma ){\mathbb{E}}\int_{s_{k}}^{s_{k+1}}\int_{0}^{1}\left%
\vert \tau _{k}^{1}(\omega )-\tau _{k}^{2}(w)\right\vert dwds.
\end{eqnarray*}%
Since%
\begin{equation*}
\int_{0}^{1}\left\vert \tau _{k}^{1}(\omega )-\tau _{k}^{2}(w)\right\vert
dw=W_{1}(\widehat{\rho }(x_{k}),\widehat{\rho }(\widetilde{x}_{k}))\leq 
\frac{1}{N}\sum_{j=1}^{N}\left\vert x_{k}^{j}-\widetilde{x}%
_{k}^{j}\right\vert ,
\end{equation*}%
we obtain%
\begin{eqnarray}\label{ineq_rec}
{\mathbb{E}}\left\vert x_{k+1}^{i}-\widetilde{x}_{k+1}^{i}\right\vert &\leq &%
{\mathbb{E}}\left\vert x_{k}^{i}-\widetilde{x}_{k}^{i}\right\vert
[1+(L_{b}+L_{\mu }(c,\gamma ))(s_{k+1}-s_{k})] \\
&&+(L_{b}+2L_{\mu }(c,\gamma ))(s_{k+1}-s_{k})\frac{1}{N}\sum_{j=1}^{N}{%
\mathbb{E}}|x_{k}^{j}-\widetilde{x}_{k}^{j}|. \notag
\end{eqnarray}%
Summing over $i=1,...,N$, we get 
\begin{equation*}
\frac{1}{N}\sum_{i=1}^{N}{\mathbb{E}}\left\vert x_{k+1}^{i}-\widetilde{x}%
_{k+1}^{i}\right\vert \leq \frac{1}{N}\sum_{i=1}^{N}{\mathbb{E}}\left\vert
x_{k}^{i}-\widetilde{x}_{k}^{i}\right\vert [1+2(L_{b}+2L_{\mu }(c,\gamma
))(s_{k+1}-s_{k})].
\end{equation*}%
Using this inequality, we get by recurrence 
\begin{equation*}
\frac{1}{N}\sum_{i=1}^{N}{\mathbb{E}}\left\vert x_{n}^{i}-\widetilde{x}%
_{n}^{i}\right\vert \leq \frac{1}{N}\sum_{i=1}^{N}{\mathbb{E}}\left\vert
x_{1}^{i}-\widetilde{x}_{1}^{i}\right\vert \left( 1+\frac{2(L_{b}+2L_{\mu
}(c,\gamma ))}{n}(s-t)\right) ^{n-1}.
\end{equation*}

\textbf{Step 3} We go now from $x_{1},\widetilde{x}_{1}$ to $X_{0}$. We have 
\begin{eqnarray*}
{\mathbb{E}}\left\vert x_{1}^{i}-\widetilde{x}_{1}^{i}\right\vert &\leq
&L_{\mu }(c,\gamma )\int_{s}^{s_{1}}{\mathbb{E}}\int_{0}^{1}\left\vert \tau
_{0}^{1}(\omega )-\tau _{0}^{2}(w)\right\vert dwds+(L_{b}+L_{\mu }(c,\gamma
))W_{1}(\rho ,\widehat{X}_{0})(s_{1}-s) \\
&=&(L_{b}+2L_{\mu }(c,\gamma ))(s_{1}-s)\mathcal{W}_{1}(\delta _{\rho },%
\mathcal{L}(\widehat{\rho }(X_{0})))
\end{eqnarray*}%
by using~\eqref{calW1} for the last equality, so that we get by summing over 
$i=1,...,N$, 
\begin{equation*}
\frac{1}{N}\sum_{i=1}^{N}{\mathbb{E}}\left\vert x_{1}^{i}-\widetilde{x}%
_{1}^{i}\right\vert \leq (L_{b}+2L_{\mu }(c,\gamma ))\frac{t-s}{n}\mathcal{W}%
_{1}(\delta _{\rho },\mathcal{L}(\widehat{\rho }(X_{0}))).
\end{equation*}%
We combine with the previous inequality and we obtain%
\begin{eqnarray*}
\frac{1}{N}\sum_{i=1}^{N}\E\left\vert x_{n}^{i}-\widetilde{x}%
_{n}^{i}\right\vert &\leq &\mathcal{W}_{1}(\delta _{\rho },\mathcal{L}(%
\widehat{\rho }(X_{0})))\frac{(L_{b}+2L_{\mu }(c,\gamma ))(t-s)}{n}\left( 1+%
\frac{L_{b}+2L_{\mu }(c,\gamma )}{n}(t-s)\right) ^{n-1} \\
&\leq &\frac{e^{(L_{b}+2L(c,\gamma ))(t-s)}(L_{b}+2L_{\mu }(c,\gamma ))(t-s)%
}{n}\mathcal{W}_{1}(\delta _{\rho },\mathcal{L}(\widehat{\rho }(X_{0}))).
\end{eqnarray*}
We notice that the law of $(x^i_n,\widetilde{x}^i_n)_i$ is invariant up to a permutation on the $i$'s. In particular, we have $\E\left\vert x_{n}^{i}-\widetilde{x}_{n}^{i}\right\vert=\E\left\vert x_{n}^{1}-\widetilde{x}_{n}^{1}\right\vert$, and therefore
$$ \sum_{i=1}^m \E\left\vert x_{n}^{i}-\widetilde{x}_{n}^{i}\right\vert \le m \frac{e^{(L_{b}+2L(c,\gamma ))(t-s)}(L_{b}+2L_{\mu }(c,\gamma ))(t-s)}{n}\mathcal{W}_{1}(\delta _{\rho },\mathcal{L}(\widehat{\rho }(X_{0}))).$$

\textbf{Step 4} Since the law of $X_{n}$ coincides with the law of $x_{n}$
it follows that $\mathcal{L}(\widehat{\rho }(X_{n}))=\mathcal{L}(\widehat{\rho }(x_{n}))$ and $\cL(X^1_n,\dots,X^m_n)=\cL(x^1_1,\dots,x^m_n)$.  The same is true for $\widetilde{X}_{n}$\ and $\widetilde{x}%
_{n}.$ So, we have by (\ref{chain1a}) 
\begin{eqnarray*}
\mathcal{W}_{1}\left(\mathcal{L}(\widehat{\rho }(X_{n})),\mathcal{L}(\widehat{\rho }(\widetilde{X}_{n}))\right) &=&\mathcal{W}_{1}(\mathcal{L}(\widehat{\rho }%
(x_{n})),\mathcal{L}(\widehat{\rho }(\widetilde{x}_{n})) \\
&\leq &\frac{1}{N}\sum_{i=1}^{N}\E\left\vert x_{n}^{i}-\widetilde{x}%
_{n}^{i}\right\vert \\
&\leq &\frac{e^{2(L_{b}+2L(c,\gamma ))(t-s)}(L_{b}+2L_{\mu }(c,\gamma ))(t-s)%
}{n}\mathcal{W}_{1}(\delta _{\rho},\mathcal{L}(\widehat{\rho }%
(X_{0}))).
\end{eqnarray*}
We get the other inequality by using $W_1(\cL(X^1_n,\dots,X^m_n),\cL(\widetilde{X}^1_n,\dots,\widetilde{X}^m_n))\le\E\left(\sum_{i=1}^m|x^i_n-\widetilde{x}^i_n|\right)$.
\end{proof}

\begin{proof}[Proof of Theorem~\ref{theorem_approx}]
We use the argument of Lindeberg. In order to pass
from the sequence $X_{k}$ to the sequence $\overline{X}_{k}$, we construct
 intermediary sequences as follows. Given $\kappa \in \{0,....,n-1\}$ we
 define $X_{\kappa ,k}=\overline{X}_{k}$ for $k\leq \kappa $ and, for $k\geq \kappa $ we define $X_{\kappa ,k}$ by the recurrence formula (\ref{chain5}). So the construction of $k\leq \kappa $ employs the intensity measure
based on the common law $\rho (\overline{X}_{k})$ while for $k>\kappa $ we
use the empirical measure. In particular, $X_{\kappa ,\kappa }^{i},i=1,...,N$
are independent and have the common distribution $\Theta _{s,s_{\kappa}}^{n}(\rho).$ Then we write%
\begin{equation*}
\mathcal{W}_{1}(\mathcal{L}(\widehat{\rho }(X_{n})),\mathcal{L}(\widehat{%
\rho }(\overline{X}_{n})))\leq \sum_{\kappa =0}^{n-1}\mathcal{W}_{1}(%
\mathcal{L}(\widehat{\rho }(X_{\kappa ,n})),\mathcal{L}(\widehat{\rho }%
(X_{\kappa +1,n}))).
\end{equation*}%
Let us compare the sequences $X_{\kappa ,k}$ and $X_{\kappa +1,k}$. Both
sequences start with $\overline{X}_{\kappa }$ at time $s_\kappa$ and then, in the following
step, $\rho (\overline{X}_{\kappa })$ is used to produce $X_{\kappa  ,\kappa +1}$ and the empirical measure $\widehat{\rho }(\overline{X}_{\kappa })$ is used to produce $X_{\kappa +1,\kappa +1}$. Afterwards, for $k\geq \kappa +1$
both sequences use their corresponding empirical measure. This is exactly the framework of Lemma~\ref{lem_Xtilde}, so we get 
\begin{equation*}
\mathcal{W}_{1}(\mathcal{L}(\widehat{\rho }(X_{\kappa ,n})),\mathcal{L}(%
\widehat{\rho }(X_{\kappa +1,n})))\leq \frac{e^{(L_{b}+2L_{\mu }(c,\gamma
)(t-s)}L_{\mu }(c,\gamma )(t-s)}{n}\times \mathcal{W}_1(\delta_{\Theta_{s,s_\kappa }^n(\rho)},\mathcal{L}(\widehat{\rho }(X_{\kappa}))), 
\end{equation*}%
and summing over $\kappa $ we obtain%
\begin{equation}
\mathcal{W}_{1}(\mathcal{L}(\widehat{\rho }(X_{n})),\mathcal{L}(\widehat{%
\rho }(\overline{X}_{n})))\leq \frac{e^{(L_{b}+2L_{\mu }(c,\gamma
)(t-s)}L_{\mu }(c,\gamma )(t-s)}{n}\times \sum_{\kappa=0}^{n-1} \mathcal{W}_1(\delta_{\Theta_{s,s_\kappa }^n(\rho)},\mathcal{L}(\widehat{\rho }(X_{\kappa}))).
\end{equation}%
It is well known that the moments of order~$q$ are preserved by the Euler scheme, thanks to~\eqref{growthb} and~\eqref{h6a}. We can therefore use Theorem~1 of the article~\cite{[FG1]} by Fournier and Guillin and get $\mathcal{W}_1(\delta_{\Theta_{s,s_\kappa }^n(\rho)},\mathcal{L}(\widehat{\rho }(X_{\kappa})))\le \tilde{C} M V_N$, leading to
$$ \mathcal{W}_{1}(\mathcal{L}(\widehat{\rho }(X_{n})),\mathcal{L}(\widehat{\rho }(\overline{X}_{n})))\leq  CMV_N.$$
Now using (\ref{chain1b}) with $\eta =\mathcal{L}(\widehat{\rho }(\overline{X}_{n}))$ and $\mu_{0}=\Theta _{s,s_{n}}(\rho )$ we get, for every $f$ with 
$L(f)\leq 1$%
\begin{equation*}
\E\left(\left\vert \frac{1}{N}\sum_{i=1}^{N}f(X_{n}^{i})-\int_{\R^{d}}f(x)\Theta
_{s,s_{n}}^{n}(\rho )(dx)\right\vert \right)\leq \mathcal{W}_{1}(\mathcal{L}(%
\widehat{\rho }(\overline{X}_{n})),\delta _{\Theta _{s,s_{n}}^{n}(\rho
)}))\leq CMV_N.
\end{equation*}%
Then, \eqref{chain8'} is a consequence of (\ref{W7}).

Last, the propagation of chaos follows by the same arguments, since we have from Lemma~\ref{lem_Xtilde}
\begin{align*}
 & W_1\left(\cL(X_{\kappa,n}^1,\dots,X_{\kappa,n}^m),\cL(X_{\kappa+1,n}^1,\dots,X_{\kappa+1,n}^m)\right)\\
  &\le m \frac{e^{(L_{b}+2L_{\mu }(c,\gamma
)(t-s)}L_{\mu }(c,\gamma )(t-s)}{n}\times \mathcal{W}_1(\delta_{\Theta_{s,s_\kappa }^n(\rho)},\mathcal{L}(\widehat{\rho }(X_{\kappa}))).
\end{align*}
\end{proof}

\textbf{Approximating particles system and algorithm}

We now discuss briefly the problem of sampling the system of particles defined
by~\eqref{chain5}. To do so, we will assume that: 
\begin{equation}
\mu (E)<\infty \quad \text{ and }\quad \left\vert \gamma (v,z,x)\right\vert
\leq \Gamma ,\forall v,x\in {\mathbb{R}}^{d},z\in E.  \label{chain12a}
\end{equation}%
The approximation in a more general framework requires then to use some
truncation procedures and to quantify the corresponding error.

When~\eqref{chain12a} holds, the solution of~\eqref{chain5} is constructed
in an explicit way as follows. Let us assume that the values of $(X_{k}^{i},i\in \{1,\dots ,N\})$ have been obtained: we explain how to construct then $(X_{k+1}^{i},i\in \{1,\dots ,N\})$. We take $T_{\ell}^{i},\ell \in {\mathbb{N}}$ to be the jump times of a Poisson process of
intensity $\mu (E)\times \Gamma $ and we take $Z_{\ell }^{i}\sim \frac{1}{%
\mu (E)}\mu (dz),U_{\ell }^{i}\sim \frac{1}{\Gamma }1_{[0,\Gamma ]}(u)du$
and $\varepsilon _{\ell }^{i}$ uniformly distributed on $\{1,...,N\}$. For
each $i=1,...,N$ this set of random variables are independent. Then one
computes explicitly 
\begin{equation*}
X_{k+1}^{i}=X_{k}^{i}+b(X_{k}^{i},\widehat{\rho }(X_{k}))\frac{s-t}{n}%
+\sum_{s_{k}\leq T_{\ell }^{i}<s_{k+1}}Q(X_{k}^{\varepsilon _{\ell
}^{i}},Z_{\ell }^{i},U_{\ell }^{i},X_{k}^{i},\widehat{\rho }(X_{k})),
\end{equation*}%
which gives the desired particle system that satisfies~\eqref{chain8'}.

\bigskip


\section{The Boltzmann equation}\label{sec_Boltz}

\subsection{The homogeneous Boltzmann equation}

We consider the following more specific set of coefficients, which
corresponds to the Boltzmann equation with hard potential. We take $a\in (0,1)$ and we define
\begin{equation}\label{gamma_Boltz}
  \gamma(v,x)=\left\vert v-x\right\vert ^{a}.
\end{equation}
Moreover we take  $b:{\mathbb{R}}^{d}\rightarrow {\mathbb{R}}^{d}$ that  is Lipschitz continuous (and thus satisfies (\ref{lipb}))
and $c:{\mathbb{R}}^{d}\times E\times {\mathbb{R}}^{d}\rightarrow {\mathbb{R}}^{d}$ that verifies the following
hypothesis. We assume that for every $(v,x)\in {\mathbb{R}}^{d}\times {\mathbb{R}}^{d}$ there exists a function $c_{v,x}:{\mathbb{R}}^{d}\times E \times {\mathbb{R}}^{d}\rightarrow \R$ such that for every $v',x'\in {\mathbb{R}}^{d}$ and $\varphi \in C_{b}^{1}({\mathbb{R}}^{d})$%
\begin{equation}
\int_{E}\varphi (c(v',z,x')) \mu (dz)=\int_{E}\varphi
(c_{v,x}(v',z,x'))\mu (dz).  \label{H1}
\end{equation}%
Notice that, since $\gamma $ does not depend on $z$, this guarantees that $Q(v,z,u,x):=c(v,z,x)1_{\{u<\gamma (v,x)\}}$ verifies (\ref{h6e}). Then, we
assume that there exists some function $\alpha :E\rightarrow {\mathbb{R}}_{+}$ such that $\int_{E}\alpha (z)\mu (dz)<\infty $,
\begin{equation}
\left\vert c(v,z,x)\right\vert \leq \alpha (z)\left\vert v-x\right\vert
\quad\text{and}\quad \left\vert c(v,z,x)-c_{v,x}(v',z,x')\right\vert \leq \alpha
(z)(\left\vert v-v'\right\vert +\left\vert x-x'\right\vert ).  \label{H2}
\end{equation}%
Notice that $Q$ may not satisfy the sublinear growth property~(\ref{h6a}) because~\eqref{H2} only ensures that  $\int_{E\times \R_+}|Q(v,z,u,x)| \mu(dz)du =  \gamma(v,x) \int_{E}|c(v,z,x)| \mu(dz)\le\int_E \alpha(z)\mu(dz) |v-x|^{1+a}$. Thus~\eqref{h6d} may not hold. So, our results does not apply directly for these coefficients. In order to fit our framework, we will use a truncation procedure. For $\Gamma \geq 1$ we define $H_{\Gamma }(v)=v\times \frac{\left\vert v\right\vert \wedge \Gamma }{\left\vert v\right\vert }$ and we notice that $\left\vert H_{\Gamma}(v)\right\vert \leq \Gamma $ and $\left\vert H_{\Gamma }(v)-H_{\Gamma}(w)\right\vert \leq \left\vert v-w\right\vert$. Then we define
\begin{align}
&\gamma _{\Gamma }(v,x) =\gamma (H_{\Gamma }(v),H_{\Gamma }(x))=\left\vert
H_{\Gamma }(v)-H_{\Gamma }(x)\right\vert ^{a},  \notag\\
&c_{\Gamma }(v,z,x) =c(H_{\Gamma }(v),z,H_{\Gamma }(x)), \quad c_{\Gamma,(v,x)}(v',z,x')=c_{H_\Gamma(v),H_\Gamma(x)}(H_\Gamma(v'),z,H_\Gamma(x')),\label{H2a}\\
&Q_{\Gamma}(v,z,u,x)=Q(H_{\Gamma }(v),z,u,H_{\Gamma }(x)),  \quad Q_{\Gamma,(v,x)}(v',z,u,x')=1_{u<\gamma_\Gamma(v',x')}c_{\Gamma,(v,x)}(v',z,x'). \notag
\end{align}%

\begin{lemma}
  \label{TR} Let $b:\R^d\to \R^d$ be Lipschitz continuous and assume~\eqref{H1} and~\eqref{H2}. Then, the triplet $(b,c,\gamma)$ satisfies~\eqref{lipb} and \eqref{h6e}.

  Besides, for every $\Gamma \geq 1,$ the triplet $(b,c_{\Gamma },\gamma_{\Gamma })$ satisfies~$\mathbf{(A)}$  with   $L_{\mu}(c_\Gamma,\gamma_\Gamma )=6  \Gamma ^{a}\int_E \alpha(z) dz$. 
\end{lemma}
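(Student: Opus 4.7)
For the first claim, condition~\eqref{lipb} is just the assumed Lipschitz property of $b$, since here $b$ does not depend on $\rho$. For~\eqref{h6e} the natural candidate is
\begin{equation*}
Q_{v,x}(v',z,u,x',\rho) := c_{v,x}(v',z,x')\,1_{u \le \gamma(v',x')}.
\end{equation*}
Since $\gamma(v',x')$ does not depend on $z$, integrating first in $u$ over $[0,\gamma(v',x')]$ and then applying~\eqref{H1} in $z$ establishes the identity; the contributions from $u > \gamma(v',x')$ are identical on both sides and cancel in the difference.

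For the truncated triplet, \eqref{lipb} is unchanged, and \eqref{h6e} is proved identically with $Q_{\Gamma,(v,x)}(v',z,u,x',\rho) := c_{\Gamma,(v,x)}(v',z,x')\,1_{u \le \gamma_\Gamma(v',x')}$ and \eqref{H1} applied at the points $(H_\Gamma(v),H_\Gamma(x))$ and $(H_\Gamma(v'),H_\Gamma(x'))$. The integrability $\int |Q_\Gamma(0,z,u,0,\delta_0)|\mu(dz)du < \infty$ is immediate since $\gamma_\Gamma(0,0)=0$, making the indicator vanish.

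The substantive work is~\eqref{h6d}. Writing $A_i$ for $c_\Gamma$ (resp.\ $c_{\Gamma,(v_1,x_1)}$) and $B_i$ for $\gamma_\Gamma$ evaluated at $(v_i,z,x_i)$ / $(v_i,x_i)$, the $u$-integral can be bounded by the elementary identity
\begin{equation*}
\int_{\R_+}|A_1 1_{u\le B_1} - A_2 1_{u\le B_2}|\,du = |A_1-A_2|(B_1\wedge B_2) + (|A_1|\vee|A_2|)|B_1-B_2|.
\end{equation*}
By~\eqref{H2} and $1$-Lipschitz continuity of $H_\Gamma$, I obtain $|A_1-A_2|\le \alpha(z)(|v_1-v_2|+|x_1-x_2|)$, $|A_i|\le \alpha(z) r_i$ with $r_i := |H_\Gamma(v_i)-H_\Gamma(x_i)|\le 2\Gamma$, and $B_1\wedge B_2 \le (2\Gamma)^a$, so the first term is at most $2\Gamma^a \alpha(z)(|v_1-v_2|+|x_1-x_2|)$ using $2^a\le 2$. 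The delicate ingredient is converting $|B_1-B_2| = |r_1^a - r_2^a|$ into a linear-in-distance bound; for this I use, assuming WLOG $r_1\ge r_2$, the chain
\begin{equation*}
r_1(r_1^a - r_2^a) = r_1^{1+a} - r_1 r_2^a \le r_1^{1+a} - r_2^{1+a} \le (1+a)(2\Gamma)^a(r_1-r_2),
\end{equation*}
where the first inequality uses $r_1 r_2^a \ge r_2^{1+a}$ and the second is the mean value theorem for $t\mapsto t^{1+a}$ on $[0,2\Gamma]$. Together with $|r_1-r_2|\le |v_1-v_2|+|x_1-x_2|$ and $(1+a)2^a\le 4$, the second term is at most $4\Gamma^a \alpha(z)(|v_1-v_2|+|x_1-x_2|)$. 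Adding the two contributions, integrating in $\mu$, and using that $c_\Gamma,\gamma_\Gamma$ are $\rho$-independent (so the $W_1(\rho_1,\rho_2)$ term trivially appears on the right) gives $L_\mu(c_\Gamma,\gamma_\Gamma) = 6\Gamma^a \int_E \alpha(z)\mu(dz)$.

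The principal obstacle is the passage from the Hölder bound $|r_1^a-r_2^a|\le |r_1-r_2|^a$ (which would yield only sub-linear continuity) to a linear bound; the above trick exploits the extra factor $|A_i|\le \alpha(z) r_i$ provided by~\eqref{H2} to transfer the non-smooth exponent $a$ to the smooth $1+a$, where the truncation bound $r_i\le 2\Gamma$ can be used profitably.
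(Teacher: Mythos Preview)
Your strategy is essentially the paper's: split the $u$-integral into a ``difference of $c$'' piece and a ``difference of $\gamma$'' piece, bound the first directly from~\eqref{H2} and the truncation $\gamma_\Gamma \le (2\Gamma)^a$, and for the second exploit the extra factor $|c_\Gamma|\le \alpha(z)r_1$ to convert the H\"older-type quantity $|r_1^a-r_2^a|$ into a linear bound. Your mean-value-theorem chain $r_1(r_1^a-r_2^a)\le r_1^{1+a}-r_2^{1+a}\le (1+a)(2\Gamma)^a(r_1-r_2)$ is a fine variant of the paper's elementary inequality $(x+y)|x^a-y^a|\le (x^a+y^a)|x-y|$; both yield the same $4\Gamma^a$.

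There is one genuine slip. Your ``identity'' is in fact only an inequality, and more importantly you assert $|A_i|\le \alpha(z)r_i$ for \emph{both} $i$. For $i=2$, $A_2=c_{\Gamma,(v_1,x_1)}(v_2,z,x_2)$, and hypothesis~\eqref{H2} gives a pointwise growth bound only on $c$, not on $c_{v,x}$; (H1) yields $\int_E|c_{v,x}(v',z,x')|\mu(dz)=\int_E|c(v',z,x')|\mu(dz)$, but no pointwise control. So $|A_2|\le \alpha(z)r_2$ is unjustified. The fix is immediate: under your WLOG $r_1\ge r_2$ (hence $B_1\ge B_2$), the exact $u$-integral equals $|A_1-A_2|B_2 + |A_1|(B_1-B_2)$, so only $|A_1|\le \alpha(z)r_1$ is needed --- which is precisely the decomposition the paper uses. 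With that correction your argument is complete and matches the paper's constant $6\Gamma^a\int_E\alpha(z)\mu(dz)$.
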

The proof of this Lemma is postponed to Appendix~\ref{App_proofs}. Thanks to this result, we can then apply Theorem~\ref{flow} to construct a flow $\theta _{s,t}^{\Gamma }(\rho)$. By Theorem~\ref{Weq}, this flows solves the weak equation~\eqref{we2} associated with $\gamma_\Gamma$ and $Q_\Gamma$. Besides, by Theorem~\ref{ExistenceRepr} there exists a probabilistic representation of this solution.  The natural question is then to know if $\theta _{s,t}^{\Gamma }(\rho)$ converges  when $\Gamma \rightarrow \infty $. This would produce a flow that would be a natural candidate for the solution of the Boltzmann equation. We leave this issue for further research.

\subsubsection*{The 3D Boltzmann equation with hard potential} \label{subsec_HP}

We now precise the coefficients which appear in the homogeneous Boltzmann
equation in dimension three. We follow the parametrization introduced in~\cite{[FM]} and~\cite{[F1]}. For this equation, the space $E$ is $E=[0,\pi ]\times [ 0,2\pi ]$, we note $z=(\zeta ,\varphi )$ and the measure $\mu$ is defined by $\mu(dz)=\zeta ^{-(1+\nu )}d\zeta d\varphi$, for some  $\nu \in (0,1)$. The coefficient $\gamma$ is given by~\eqref{gamma_Boltz}. We now define~$c$. Given a vector $X\in {\mathbb{R}}^{3}\setminus\{0\}$, one
may construct $I(X),J(X)\in {\mathbb{R}}^{3}$ such that $X\rightarrow(I(X),J(X))$ is measurable and $(\frac{X}{\left\vert X\right\vert },\frac{I(X)}{\left\vert X\right\vert },\frac{J(X)}{\left\vert X\right\vert })$ is an orthonormal basis in ${\mathbb{R}}^{3}$. We define the function $\Delta
(X,\varphi )=\cos (\varphi )I(X)+\sin (\varphi )J(X)$  and then
\begin{equation}\label{def_c_3D}
c(v,(\zeta ,\varphi ),x)=-\frac{1-\cos \zeta }{2}(v-x)+\frac{\sin (\zeta )}{2}\Delta (v-x,\varphi ).
\end{equation}%
The specific difficulty in this framework is that $c$ does not satisfy the
standard Lipschitz continuity property. It has been circumvented by Tanaka in \cite{[T1]} (see also Lemma 2.6 in \cite{[FM]}) who proves that one may construct a measurable function $\eta :{\mathbb{R}}^{3}\times {\mathbb{R}}^{3}\rightarrow \lbrack 0,2\pi ]$ such that 
\begin{equation*}
\left\vert c(v,(\zeta ,\varphi ),x)-c(v',(\zeta
,\varphi +\eta (v'-x',v-x)),x')\right\vert \leq 2\zeta (\left\vert v-v'\right\vert +\left\vert x-x'\right\vert).
\end{equation*}%
This means that Hypothesis~(\ref{H2}) holds with $c_{v,x}(v',(\zeta ,\varphi) ,x')=c(v',(\zeta ,\varphi +\eta (v'-x',v-x)),x')$. This function also satisfies (\ref{H1}):
\begin{equation*}
\int_{0}^{\pi }\frac{d\zeta }{\zeta ^{1+\nu }}\int_{0}^{2\pi}f(x+c(v,(\zeta ,\varphi ),x))d\varphi =\int_{0}^{\pi }\frac{d\zeta }{\zeta ^{1+\nu }}\int_{0}^{2\pi }f(x+c(v,(\zeta ,\varphi +\eta (v-x,\overline{v}-\overline{x})),x))d\varphi,
\end{equation*}%
since for every $v,x\in {\mathbb{R}}^{3}$ and $\zeta \in (0,\pi )$ the
function $\varphi \rightarrow f(x+c(v,(\zeta ,\varphi ),x))$ is $2\pi$-periodic. 
We are therefore indeed in the framework of Lemma~\ref{TR}.

\subsection{The  Boltzmann-Enskog equation}

In this section we consider the non homogeneous Boltzmann equation called Enskog equation which has been discussed in~\cite{[Ar]}. The study of this equation has been initiated in~\cite{[P]}, and more recent contributions concerning existence, uniqueness, probabilistic interpretation and particle system approximations are given in~\cite{[ARS]}, \cite{[FRS]} and~\cite{[FRS1]}. We consider a model in which $\bX=(\bX^{1},...,\bX^{d})\in {\mathbb{R}}^{d}$ with $d=3$
represents the position of the "typical particle" and $X=(X^{1},...,X^{d})\in{\mathbb{R}}^{d}$ is its velocity. In all this subsection, letters with bar will refer to positions, and bold letters $\boX=(\bX,X)$ will denote the couple position-velocity. Then, the position
follows the dynamics given by the velocity:%
\begin{equation*}
  \bX_{s,t}=\bX_{0}+\int_{s}^{t}X_{s,r}dr,
\end{equation*}%
where $\bX_0$ is an integrable random variable. As for the velocity, $X_{s,t}$ follows the equation
\begin{equation*}
X_{s,t}=X_{0}+\int_{s}^{t}\int_{({\mathbb{R}}^{d}\times {\mathbb{R}}^{d})\times E\times {\mathbb{R}}_{+}} c(v,z, X_{s,r-})1_{\{u\leq \gamma(v,X_{s,r-})\}}\times \beta (\bv,\bX_{s,r-}) N(d\bov,dz,du,dr)
\end{equation*}%
where $\bov=(\bv,v)$, $\gamma(v,x)=|x-v|^a$, $\beta \in C_{b}^{1}({\mathbb{R}}^{d}\times {\mathbb{R}}^{d})$ and $N$ is a Poisson point measure of intensity 
\begin{equation*}
\botheta_{s,r}(d\bov )\mu (dz)dudr\quad \text{with}\quad
\botheta_{s,r}(d \bov)=\P((\bX_{s,r},X_{s,r})\in d\bov).
\end{equation*}%
Here, as in the case of the homogeneous equation, $E=[0,\pi ]\times [0,2\pi ]$ and $z=(\zeta ,\varphi )$ and the measure $\mu (dz)=\zeta ^{-(1+a)} d\zeta d\varphi$, $a\in (0,1)$.

We look to this dynamics as a system in dimension $2d$ (typically with $d=3$). We denote $\boxx=(\bx,x)$, $\bov=(\bv,v)$ and $\boX=(\bX,X)$. The drift is then given by 
\begin{eqnarray} 
\bob(\boxx) &=&\begin{cases} x^{i}\quad \text{for }i=1,...,d,  \\
0\quad \text{for }i=d+1,...,2d,
\end{cases}\label{e1}
\end{eqnarray}%
and the collision kernel (cross section) is%
\begin{equation}
\boc (\bov,z,\boxx)=c (v,z,x) \beta (\bv,\bx)  \label{e2},
\end{equation}%
where~$c$ is defined as in the Boltzmann equation by~\eqref{def_c_3D} with $z=(\zeta ,\varphi )\in E$, and
$$ \bogamma (\boxx,\bov)=\left\vert v-x\right\vert^{a},$$
with $a\in (0,1)$. The equation~(\ref{we2}) associated to these coefficients is the ($d$-dimensional) Enskog equation. In the particular case $\beta(\bv,\bx)=1$, we recover the case of the homogeneous Boltzmann equation and $\bar{X}$ is just the time-integral of the process. The specificity of the inhomogeneous case is illustrated by the following example. Let us take $R>0$,  $i_{R}$ be a regularized version of the
indicator function $1_{x<R}$ and define  $\beta_{R}(\bv,\bx)=i_{R}(\left\vert \bx-\bv \right\vert )$. Then, the coefficient $\beta_{R}(\bv,\bx)$  means that only the particles which are closer to the distance $R$ may collide.

We now define the truncated coefficients $\boc_\Gamma$ and $\bogamma_\Gamma$ for $\Gamma>0$. We still denote, for $v\in\R^d$, $H_{\Gamma }(v)=v\times \frac{\left\vert v\right\vert \wedge \Gamma }{\left\vert v\right\vert }$, and we define  $\boc_\Gamma (\bov,z,\boxx)=c_\Gamma(v,z,x) \beta_\Gamma(\bv,\bx)$ with  $\beta_\Gamma(\bv,\bx):=\beta(H_\Gamma(\bv),H_\Gamma(\bx))$ and  $\bogamma_\Gamma (\boxx,\bov)=\left\vert H_\Gamma(v)-H_\Gamma(x)\right\vert^{a}$.

\begin{lemma}\label{TR2} Assume that $\beta \in C_{b}^{1}({\mathbb{R}}^{d}\times {\mathbb{R}}^{d})$  and that~\eqref{H1} and~\eqref{H2} hold. Then,  for every $\Gamma \ge 1$, the triplet $(\bob,\boc_\Gamma,\bogamma_\Gamma)$ satisfies~$\mathbf{(A)}$  with   $L_{\mu}(c_\Gamma,\gamma_\Gamma )=C  \Gamma ^{a+1}$ for some constant $C>0$.\end{lemma}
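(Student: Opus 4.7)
The plan is to verify, one at a time, the three parts of Assumption $\mathbf{(A)}$—i.e. the Lipschitz estimate~\eqref{lipb} on~$\bob$, the distributional identity~\eqref{h6e}, and the pseudo-Lipschitz bound~\eqref{h6d}—for the $2d$-dimensional Enskog triplet $(\bob,\boc_\Gamma,\bogamma_\Gamma)$. The only genuinely new work relative to Lemma~\ref{TR} comes from the presence of the position-dependent modulation $\beta_\Gamma$ and from the fact that $\boc_\Gamma$ is now a function of $\boxx=(\bx,x)$ and $\bov=(\bv,v)$; the $v,x$ part of the argument will reuse the trick already used in the proof of Lemma~\ref{TR}. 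The coefficients do not depend on~$\rho$, so the $W_1(\rho_1,\rho_2)$ term of~\eqref{h6d} is vacuous.

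For~\eqref{lipb}, the drift $\bob(\boxx)=(x,0)\in\R^{2d}$ is linear in~$\boxx$ so it is automatically $1$-Lipschitz. For~\eqref{h6e}, I would set
\[
\boc_{\Gamma,(\bov,\boxx)}(\bov',z,\boxx'):=c_{\Gamma,(v,x)}(v',z,x')\,\beta_\Gamma(\bv',\bx'),\qquad \boQ_{\Gamma,(\bov,\boxx)}(\bov',z,u,\boxx'):=\boc_{\Gamma,(\bov,\boxx)}(\bov',z,\boxx')\mathbf 1_{u\le \bogamma_\Gamma(\boxx',\bov')},
\]
with $c_{\Gamma,(v,x)}$ provided by Lemma~\ref{TR}. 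Since $\beta_\Gamma(\bv',\bx')$ and $\bogamma_\Gamma(\boxx',\bov')$ do not depend on~$z$, the distributional identity~\eqref{h6e} for $\boQ_\Gamma$ follows from the analogous identity for $c_\Gamma$ (Lemma~\ref{TR}) applied to the test function $y\mapsto \varphi(y\,\beta_\Gamma(\bv',\bx'))$, after integrating first in~$u$.

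The real content is~\eqref{h6d}. Writing $\tilde c_1,\tilde c_{1,2}$ for the integrands of $\boQ_\Gamma$ and $\boQ_{\Gamma,(\bov_1,\boxx_1)}$ after stripping the indicator, and $\gamma_i:=\bogamma_\Gamma(\boxx_i,\bov_i)=|H_\Gamma(v_i)-H_\Gamma(x_i)|^a$, the $u$-integration gives the clean identity
\[
\int_{\R_+}|\tilde c_1\mathbf 1_{u\le\gamma_1}-\tilde c_{1,2}\mathbf 1_{u\le\gamma_2}|\,du=\min(\gamma_1,\gamma_2)|\tilde c_1-\tilde c_{1,2}|+(\gamma_1-\gamma_2)^+|\tilde c_1|+(\gamma_2-\gamma_1)^+|\tilde c_{1,2}|.
\]
The first summand I would handle by a triangle inequality splitting the $c$-variation and the $\beta$-variation: the $c$-piece is $\alpha(z)\|\beta\|_\infty(|v_1-v_2|+|x_1-x_2|)$ by~\eqref{H2}, while the $\beta$-piece is $|c_{\Gamma,(v_1,x_1)}(v_2,z,x_2)|\|\nabla\beta\|_\infty(|\bv_1-\bv_2|+|\bx_1-\bx_2|)$, for which the bound $|c_{\Gamma,(v_1,x_1)}(v_2,z,x_2)|\le 6\Gamma\alpha(z)$ from Lemma~\ref{TR} produces the $\Gamma^{a+1}$ factor after multiplication by $\min(\gamma_1,\gamma_2)\le (2\Gamma)^a$.

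The main obstacle, already present for the pure Boltzmann case, is the remaining term $\max(|\tilde c_1|,|\tilde c_{1,2}|)\,|\gamma_1-\gamma_2|$: the map $t\mapsto t^a$ is only $a$-Hölder and cannot be made Lipschitz even on $[0,2\Gamma]$. The resolution is that the $c$-factor vanishes when the velocities coincide (by the structural bound $|c_{v,x}(v',z,x')|\le \alpha(z)|v'-x'|$ coming from~\eqref{H2}, so $|\tilde c_1|\le \alpha(z)y_1\|\beta\|_\infty$ and $|\tilde c_{1,2}|\le \alpha(z)y_2\|\beta\|_\infty$ with $y_i:=|H_\Gamma(v_i)-H_\Gamma(x_i)|$), and this exactly cancels the singularity via the elementary inequality
\[
\max(y_1,y_2)\,|y_1^a-y_2^a|\le (2\Gamma)^a|y_1-y_2|\qquad\text{for }y_1,y_2\in[0,2\Gamma],
\]
which one proves by setting $t=\min(y_1,y_2)/\max(y_1,y_2)\in[0,1]$ and using that $(1-t^a)/(1-t)\le 1$ on $[0,1]$. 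This contributes only a $\Gamma^a$ term. Summing everything yields $L_\mu(c_\Gamma,\gamma_\Gamma)\le C\Gamma^{a+1}$ with $C$ depending on $\|\beta\|_\infty$, $\|\nabla\beta\|_\infty$, and $\int_E\alpha(z)\mu(dz)$, as claimed.
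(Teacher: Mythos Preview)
Your argument is essentially correct and closely parallels the paper's, but the paper takes a slightly more modular route: rather than redoing the $A+B$ computation of Lemma~\ref{TR} from scratch with the extra $\beta_\Gamma$ factor, it simply checks that $\boc_\Gamma$ itself satisfies \eqref{H1}--\eqref{H2} with the modified weight $\boldsymbol{\alpha}(z)=(2\Gamma L(\beta)+\|\beta\|_\infty)\alpha(z)$. Indeed, using $|c_\Gamma(v,z,x)|\le 2\Gamma\alpha(z)$ and the Lipschitz bound on $\beta$, one gets
\[
|c_\Gamma(v,z,x)\beta_\Gamma(\bv,\bx)-c_{\Gamma,(v,x)}(v',z,x')\beta_\Gamma(\bv',\bx')|\le \boldsymbol{\alpha}(z)\big(|\bov-\bov'|+|\boxx-\boxx'|\big),
\]
and then Lemma~\ref{TR} applied to $(\bob,\boc_\Gamma,\bogamma_\Gamma)$ with this $\boldsymbol{\alpha}$ yields $L_\mu=6\Gamma^a\int_E\boldsymbol{\alpha}(z)\mu(dz)=C\Gamma^{a+1}$ directly. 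This buys economy: no need to rerun the $y_1,y_2$ inequality, and the triangle-inequality decomposition of Lemma~\ref{TR} (with $|c_\Gamma(v_1,z,x_1)|$ in front of the indicator difference, rather than your exact three-term identity) avoids ever needing a bound on $|\tilde c_{1,2}|$ in terms of $y_2$.

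One small imprecision in your write-up worth flagging: the pointwise claim ``$|c_{v,x}(v',z,x')|\le\alpha(z)|v'-x'|$ coming from~\eqref{H2}'' does not follow from~\eqref{H2} alone, which only bounds $|c(v,z,x)-c_{v,x}(v',z,x')|$. What you actually need is the $\mu$-integrated version, and that does follow from~\eqref{H1} applied with $\varphi(y)=|y|$ (approximated in $C_b^1$), giving $\int_E|c_{v,x}(v',z,x')|\mu(dz)=\int_E|c(v',z,x')|\mu(dz)\le|v'-x'|\int_E\alpha\,d\mu$. Since $\gamma_1,\gamma_2$ are $z$-independent, this integrated bound is enough for your $(\gamma_2-\gamma_1)^+$ term. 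Alternatively, the paper's asymmetric decomposition sidesteps the issue entirely.
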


The proof is postponed to Appendix~\ref{App_proofs}. As for the Boltzmann equation, this lemma allows by Theorem~\ref{flow} to construct the flow, and then by Theorems~\ref{Weq} and~\ref{ExistenceRepr}, a weak solution and a probabilistic representation for the Enskog-Boltzmann equation with truncated coefficients. The convergence when $\Gamma \to \infty$ remains an open problem.

\subsection{A Boltzmann equation with a mean field interaction on the position}

The fact that we are able with our approach to mix easily Boltzmann and McKean-Vlasov interactions gives more flexibility to model the behaviour of particles. Here,
we give a very simple example that is derived from the Boltzmann-Enskog
equation discussed above. In this equation, interactions are made both for the position  and the velocity through a Poisson point measure. More precisely, when the function $\beta$ is the indicator function, collisions only occur when the particle is close enough to the one sampled by the Poisson point measure.  Here, we consider an alternative modelling with a mean field interaction on the position.  Precisely, we consider the coefficient~$\bob$ defined by~\eqref{e1}, and the coefficients $\boc$ and $\bogamma$ given by
\begin{equation}\label{BE_meanfield}
  \boc(v,z,\boxx)=c(v,z,x) \text{ and } \bogamma(v,\boxx,\rho)=|v-x|^a\int_{\R^d}p_{R}(\bx-\bx')\rho (d \bx'),
\end{equation}
with $p_R(x)=\frac 1 {(2\pi R^2)^{d/2}}\exp\left(-\frac{|x|^2}{2R^2}\right)$ is the Gaussian kernel. Thus, the higher is the probability density function of the particles in the neighbourhood of~$\bx$, the more likely are the collisions. This corresponds to the following dynamics for $\boX=(\bX,X)$:
\begin{equation}\label{BE_MFeq}
 \begin{cases} \bX_{s,t}=\bX_{0}+\int_{s}^{t}X_{s,r}dr,\\
X_{s,t}=X_{0}+\int_{s}^{t}\int_{ {\mathbb{R}}^{d}\times E\times {\mathbb{R}}_{+}} c(v,z, X_{s,r-})1_{\{u\leq \bogamma(v,\boX_{s,r-},\bar{\theta}_{s,r})\}} N(dv,dz,du,dr),
 \end{cases}
\end{equation}
where $N$ is a Poisson point measure with intensity $\theta_{s,r}(dv)\mu(dz)dudr$. Here,  $\botheta_{s,r}(d \bov)=\P((\bX_{s,r},X_{s,r})\in d\bov)$ is the probability distribution of $(\bX_{s,r},X_{s,r})$ with marginal laws $\bar{\theta}_{s,r}(d\bv)=\P(\bX_{s,r}\in d\bv)$ and $\theta_{s,r}(dv)=\P(X_{s,r}\in dv)$.

To underline the different types of interaction (Boltzmann and mean-field), we also write the interacting particle system corresponding to this equation: for $i\in\{1,\dots,N\}$,
\begin{equation}\label{BE_MFeq_IPS}
 \begin{cases} \bX^i_{t}=\bX^i_{0}+\int_{0}^{t}X^i_{r}dr,\\
X^i_{t}=X^i_{0}+\int_{0}^{t}\int_{ {\mathbb{R}}^{d}\times E\times {\mathbb{R}}_{+}} c(v,z, X^i_{s,r-})1_{\{u\leq   \frac 1N \sum_{j=1}^N |X^i_{t-}-X^j_{t-}|^a p_R(\bX^i_{t}-\bX^j_{t})\}} N^i(dv,dz,du,dr),
 \end{cases}
\end{equation}
where $N^i$, $i=1,\dots,N$, are independent Poisson point measure with intensity $\frac 1 N \sum_{j=1}^N\delta_{X^j_{t-}}(dv)\mu(dz) du dr$.

We now define for $\Gamma>0$ the truncated coefficients $\boc_\Gamma(v,z,\boxx)=c(H_\Gamma(v),z,H_\Gamma(x))$ and $\bogamma_\Gamma (v,\boxx,\rho)=|H_\Gamma(v)-H_\Gamma(x)|^a\int_{\R^d}p_{R}(\bx-\bx')\rho (d \bx')$. The proof of next lemma can be found in Appendix~\ref{App_proofs}.
\begin{lemma}\label{TR3}
  Assume that~\eqref{H1} and~\eqref{H2} hold. Then, for every $\Gamma \ge 1$, the triplet $(\bob_\Gamma,\boc_\Gamma,\bogamma_\Gamma)$ defined as the truncation of~\eqref{e1} and~\eqref{BE_meanfield} satisfies~$\mathbf{(A)}$  with   $L_{\mu}(c_\Gamma,\gamma_\Gamma )=C  \Gamma ^{a+1}$ for some constant $C>0$.
\end{lemma}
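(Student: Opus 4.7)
The plan is to verify the three parts of Assumption~$\mathbf{(A)}$ for $(\bob,\boc_\Gamma,\bogamma_\Gamma)$, adapting the proof of Lemma~\ref{TR} to accommodate the extra mean-field factor $g_\rho(\bx):=\int_{\R^d} p_R(\bx-\bx')\bar{\rho}(d\bx')$ that now appears in the intensity (with $\bar{\rho}$ the marginal of $\rho$ on the position coordinate). The drift $\bob$ is a linear projection onto velocity coordinates, so~\eqref{lipb} is immediate. For~\eqref{h6e}, since $\bogamma_\Gamma$ does not depend on $z$, one sets
\[
Q_{\Gamma,(\bov,\boxx)}(\bov',z,u,\boxx',\rho')=c_{H_\Gamma(v),H_\Gamma(x)}(H_\Gamma(v'),z,H_\Gamma(x'))\,\mathbf{1}_{u\le \bogamma_\Gamma(v',\boxx',\rho')}
\]
with $c_{v,x}$ provided by~\eqref{H1}, and the Fubini argument from Lemma~\ref{TR} applies verbatim.

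The substance lies in~\eqref{h6d}. Writing $y_i:=|H_\Gamma(v_i)-H_\Gamma(x_i)|\in[0,2\Gamma]$ and $g_i:=g_{\rho_i}(\bx_i)$, so that $\bogamma_\Gamma(v_i,\boxx_i,\rho_i)=y_i^a g_i$, I would start from the pointwise bound
\[
|Q_\Gamma^{(1)}-Q_{\Gamma,(\bov_1,\boxx_1)}^{(2)}|\le |c_\Gamma(v_1,z,x_1)-c_{\Gamma,(v_1,x_1)}(v_2,z,x_2)|\mathbf{1}_{u\le y_2^a g_2}+|c_\Gamma(v_1,z,x_1)|\,|\mathbf{1}_{u\le y_1^a g_1}-\mathbf{1}_{u\le y_2^a g_2}|
\]
and integrate in $u$ and $z$. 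The first term is controlled by~\eqref{H2} applied to the $1$-Lipschitz truncated arguments together with the trivial bound $y_2^a g_2\le(2\Gamma)^a\|p_R\|_\infty$, producing a contribution of order $\Gamma^a$ times $|v_1-v_2|+|x_1-x_2|$. The hard term reduces to $\int\alpha(z)\mu(dz)\cdot y_1|y_1^a g_1-y_2^a g_2|$, which I would split as $y_1 y_1^a|g_1-g_2|+y_1 g_2|y_1^a-y_2^a|$.

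For the first piece, smoothness of $p_R$ together with Kantorovich duality yields
\[
|g_1-g_2|\le L_R\bigl(|\bx_1-\bx_2|+W_1(\bar{\rho}_1,\bar{\rho}_2)\bigr)\le L_R\bigl(|\bx_1-\bx_2|+W_1(\rho_1,\rho_2)\bigr),
\]
the last inequality being the $1$-Lipschitz projection onto the position marginal, and multiplying by $y_1 y_1^a\le(2\Gamma)^{a+1}$ delivers the announced $\Gamma^{a+1}$ scaling. For the second piece I rely on the identity
\[
y_1|y_1^a-y_2^a|\le|y_1^{a+1}-y_2^{a+1}|\le(a+1)(2\Gamma)^a|y_1-y_2|,
\]
whose first inequality is a one-line case distinction (if $y_1\le y_2$, then $y_1 y_2^a-y_1^{a+1}\le y_2^{a+1}-y_1^{a+1}$, the reverse case being symmetric) and whose second is the mean-value theorem for $s\mapsto s^{a+1}$; combining with $|y_1-y_2|\le|v_1-v_2|+|x_1-x_2|$ (since $H_\Gamma$ is $1$-Lipschitz) and $g_2\le\|p_R\|_\infty$ yields a $\Gamma^a$ contribution. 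Summing all terms gives $L_\mu(c_\Gamma,\gamma_\Gamma)\le C\Gamma^{a+1}$ as claimed.

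The main obstacle is the apparent $a$-H\"older (and not Lipschitz) regularity of $y\mapsto y^a$ near zero, which would naively rule out~\eqref{h6d}. The trick $y_1|y_1^a-y_2^a|\le(a+1)(2\Gamma)^a|y_1-y_2|$ resolves this by using the extra factor $y_1$ supplied by $|c_\Gamma|\le\alpha(z)y_1$ to upgrade the H\"older estimate into a genuine Lipschitz one at the cost of an additional $\Gamma^a$; the same mechanism already underlies Lemma~\ref{TR}, while here the mean-field factor $g_\rho$ is what dictates the final scaling $\Gamma^{a+1}$ instead of $\Gamma^a$.
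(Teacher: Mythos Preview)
Your proof is correct and follows essentially the same approach as the paper: both split the product difference $y_1^a g_1-y_2^a g_2$ into a $y^a$-variation piece (handled via the Lemma~\ref{TR} mechanism that trades the extra $y_1$ factor from $|c_\Gamma|\le\alpha(z)y_1$ for Lipschitz regularity) and a $g$-variation piece (handled via the Lipschitz constant of $p_R$ and Kantorovich duality, yielding the $\Gamma^{a+1}$). The only cosmetic differences are the choice of intermediate point in the split ($y_1^a g_2$ for you, $y_2^a g_1$ for the paper) and the elementary inequality used for $y_1|y_1^a-y_2^a|$ (you use $\le|y_1^{a+1}-y_2^{a+1}|\le(a+1)(2\Gamma)^a|y_1-y_2|$, the paper uses $(x+y)|x^a-y^a|\le(x^a+y^a)|x-y|$).
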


Thanks to Lemma~\ref{TR3}, we can apply Theorem~\ref{flow} to get the existence and uniqueness of a flow~$\theta_{s,t}^\Gamma$ corresponding to this equation and Theorem~\ref{ExistenceRepr} to get a probabilistic representation associated to this flow. Again, one needs further assumptions to justify that this produces, when $\Gamma \to \infty$ a solution to~\eqref{BE_MFeq}. This is left for further research. 

\appendix

\section{Technical proofs}\label{App_proofs}

\begin{proof}[Proof of Lemma~\ref{TR}]
  It is easy to check that $c_\Gamma$ also satisfies~\eqref{H1} and~\eqref{H2}.  Therefore~\eqref{h6e} holds for $(b,c_{\Gamma },\gamma_{\Gamma })$. We now check~(\ref{h6d}). We have%
\begin{align*}
&\int_{E\times {\mathbb{R}}_{+}}\left\vert Q_{\Gamma
}(v_{1},z,u,x_{1})-Q_{\Gamma ,(v_{1},x_{1})}(v_{2},z,u,x_{2})\right\vert
  \mu (dz)du \le A+B, \text{ with}\\
A= &\int_{E\times {\mathbb{R}}_{+}}\left\vert c_{\Gamma }(v_{1},z,x_{1})-c_{\Gamma
    ,(v_{1},x_{1})}(v_{2},z,x_{2})\right\vert 1_{u<\gamma_{\Gamma }(v_{2},x_{2})}\mu (dz)du,\\
  B= &\int_{E\times {\mathbb{R}}_{+}} \left\vert c_{\Gamma
}(v_{1},z,x_{1}) ( 1_{u<\gamma_{\Gamma }(v_{1},x_{1})}- 1_{u<\gamma_{\Gamma }(v_{2},x_{2})})\right\vert
 \mu (dz)du.
\end{align*}%
From~\eqref{H2} and $|\gamma_\Gamma(v,x)|\le 2\Gamma^{a}$, we get $A\le 2\Gamma^a \int_E \alpha(z)\mu(dz)(|v_1-v_2|+|x_1-x_2|)$. We have 
\begin{equation}
B\le \int_{E}\left\vert \gamma _{\Gamma }(v_{1},x_{1})-\gamma _{\Gamma
}(v_{2},x_{2})\right\vert |c_{\Gamma }(v_{1},z,x_{1})|\mu (dz),  \label{b1}
\end{equation}%
and  we denote%
\begin{equation*}
X=\left\vert H_{\Gamma }(v_{1})-H_{\Gamma }(x_{1})\right\vert \quad \text{and} \quad
Y=\left\vert H_{\Gamma }(v_{2})-H_{\Gamma }(x_{2})\right\vert
\end{equation*}%
We now use the following basic inequality:
$$\forall x,y>0, (x+y)|x^a-y^a|\le (x^a+y^a)|x-y|.$$
This is easily checked by taking for example $x>y$, expanding and observing that $yx^a-xy^a=(xy)^a(y^{1-a}-x^{1-a})<0$ since $a\in (0,1)$. We then get 
\begin{eqnarray*}
X(X^{a}-Y^{a}) &\leq &(X+Y)(X^{a}-Y^{a})\leq (X^{a}+Y^{a})\left\vert X-Y\right\vert \leq 
4\Gamma ^{a}\left\vert X-Y\right\vert \\
&\leq &4\Gamma ^{a}(\left\vert H_{\Gamma
}(v_{1})-H_{\Gamma }(v_{2})\right\vert +\left\vert H_{\Gamma
}(x_{1})-H_{\Gamma }(x_{2})\right\vert ) \\
&\leq &4 \Gamma ^{a} (\left\vert v_{1}-v_{2}\right\vert
+\left\vert x_{1}-x_{2}\right\vert ).
\end{eqnarray*}%
We also have 
\begin{equation*}
\left\vert c_{\Gamma }(v_{1},z,x_{1})\right\vert \leq \alpha (z)\left\vert
H_{\Gamma }(v_{1})-H_{\Gamma }(x_{1})\right\vert =\alpha(z) X
\end{equation*}%
so we get 
\begin{equation} \label{b2}
B\leq X(X^{a}-Y^{a})\times \int_{E}\alpha(z)\mu (dz)\leq  4 \Gamma ^{a} \int_{E}\alpha(z)\mu (dz) (\left\vert v_{1}-v_{2}\right\vert +\left\vert x_{1}-x_{2}\right\vert ).
\end{equation}
\end{proof}
\begin{proof}[Proof of Lemma~\ref{TR2}]
  Using that $\beta$ is bounded, we have
  $|\boc_\Gamma(\bov,z,\boxx)|\le \|\beta\|_\infty \alpha(z)|v-x|$. Besides, we have
  \begin{align*}
    & |c_\Gamma(v,z,x)\beta_\Gamma(\bv,\bx)-c_{\Gamma,(v,x)}(v',z,x')\beta_\Gamma(\bv',\bx')|\\
    &\le |c_\Gamma(v,z,x)||\beta_\Gamma(\bv,\bx)-\beta_\Gamma(\bv',\bx')|+|\beta_\Gamma(\bv',\bx')||c_\Gamma(v,z,x)-c_{v,x}(v',z,x')| \\
    &\le 2\alpha(z)\Gamma L(\beta)(|\bv-\bv'|+|\bx-\bx'|)+\alpha(z)\|\beta\|_\infty(|v-v'|+|x-x'|),
  \end{align*}
  where $L(\beta)$ is the Lipschitz constant of~$\beta$.
Thus, if we define $\boc_{\bov,\boxx}(\bov',z,\boxx')=c_{v,x}(v',z,x')\beta(\bv',\bx')$, $\boc$ satisfies equations~\eqref{H1} and~\eqref{H2} with $\boldsymbol{\alpha}(z)=(2\Gamma L(\beta)+\|\beta\|_\infty)\alpha(z)$. We then get the claim by applying the same arguments as for Lemma~\ref{TR}.   
\end{proof}
\begin{proof}[Proof of Lemma~\ref{TR3}]
  We first observe that $\|p_R\|_\infty=(2\pi R^2)^{-d/2}$ and that $p_R$ is Lipschitz with constant $\|p_R\|_\infty /R$. Thus, the proof of Lemma~\ref{TR} can be easily adapted. In particular, the term ``B'' is now
\begin{align*}
  B&=\int_{E\times \R_+}|c_\Gamma(v_1,z,x_1)1_{u\le \gamma_\Gamma(v_1,\boxx_1,\rho_1)}-c_\Gamma(v_2,z,x_2)1_{u\le \gamma_\Gamma(v_2,\boxx_2,\rho_2)}|\mu(dz)du    \\
  &\le \int_{E\times \R_+}|c_\Gamma(v_1,z,x_1)||\gamma_\Gamma(v_1,\boxx_1,\rho_1)-\gamma_\Gamma(v_2,(\bx_1,x_2),\rho_1)| \mu(dz)\\
&+ \int_{E\times \R_+}|c_\Gamma(v_1,z,x_1)||\gamma_\Gamma(v_2,(\bx_1,x_2),\rho_1)-\gamma_\Gamma(v_2,\boxx_2,\rho_2)| \mu(dz).
\end{align*}
The first term can be bounded by~$4\|p_R\|_\infty\Gamma^a\int_E\alpha(z)dz(|v_1-v_2|+|x_1-x_2|)$ as in Lemma~\eqref{TR} while the second term is bounded by
\begin{align*}
  &2\Gamma \times 2\Gamma^a \times\left|\int_{\R^d}p_{R}(\bx_1-\bx')\rho_1 (d \bx') -\int_{\R^d}p_{R}(\bx_2-\bx')\rho_2 (d \bx') \right|\\
  &\le \frac{4\Gamma^{a+1}\|p_R\|_\infty}{R}\left(|\bx_1-\bx_2|+W_1(\rho_1,\rho_2)\right).
\end{align*}
The other properties are easy to check.
\end{proof}
\bibliographystyle{abbrv}
\bibliography{biblio_Boltzmann}
\end{document}